\newcommand{\cor}[1]{{\! \!}}
\numberwithin{equation}{section}
\theoremstyle{plain}
\newtheorem*{t-theorem}{Theorem}
\newtheorem{theorem}{Theorem}[section]
\newtheorem{corollary}[theorem]{Corollary}
\newtheorem{lemma}[theorem]{Lemma}
\theoremstyle{definition}
\newtheorem{remark}[theorem]{Remark}
\newtheorem{definition}[theorem]{Definition}
\def \N {\mathbb{N}}
\def \R {\mathbb{R}}
\def \E {\mathbb{E}}
\def \P {\mathbb{P} \, }
\def \WW {\mathcal{W}}
\def \LL {\mathcal{L}}
\def \NN {\mathcal{N}}
\def \a {\alpha}
\def \b {\beta}
\def \G {\Gamma}
\def \e {\varepsilon}
\def \d {\delta}
\def \D {\Delta}
\def \k {\kappa}
\def \s {\sigma}
\def \sf {\text{\tt s}}
\def \t {\tau}
\def \Om {\Omega}
\def \etc {,\ldots,}
\def \dist {{\rm dist}}
\def \dim {{\rm dim}}
\def \supp {{\rm supp}}
\newcommand{\norm}[1]{\left \| #1 \right \|}
\title[Singular values and permanent estimators]{Singular values of Gaussian matrices and permanent estimators}
\author{Mark Rudelson}\thanks{M.R.: Department of Mathematics, University of Michigan. Partially supported by NSF grant DMS 1161372 and USAF Grant FA9550-14-1-0009.}
\author{Ofer Zeitouni}
\thanks{O. Z.: Faculty of Mathematics, Weizmann Institute and
Courant Institute, New York University. Partially supported by NSF grant DMS 1106627 and a grant from the Israel Science Foundation.}
\date{January 26, 2013. Revised December 27, 2013}
\begin{document}
\maketitle
\begin{abstract}
 We present estimates on the small singular values of a class of matrices
 with independent Gaussian entries and inhomogeneous variance profile,
 satisfying a broad-connectedness condition. Using these estimates
 and concentration of measure for the spectrum of Gaussian matrices with
 independent entries, we prove that for a large class of
 graphs satisfying an appropriate expansion property, the
 Barvinok--Godsil-Gutman estimator for the permanent achieves
 sub-exponential errors with high probability.
\end{abstract}
\section{Introduction}
Recall that the permanent of an $n$-by-$n$  matrix $A$ is defined as
$$\mbox{\rm per}(A)=\sum_{\pi\in \mathcal{S}_n} a_{1,\pi(1)}a_{2,\pi(2)}
\cdots a_{n,\pi(n)}\,,$$
where the summation is over all permutations of $n$ elements.
In this paper we consider only matrices $A$ with non-negative entries. This
includes in particular matrices with 0--1 entries, for which the evaluation
of the permanent is fundamental in combinatorial counting problems.
For general
0--1 matrices, the evaluation of the permanent is a $\#P$-complete
problem \cite{Val}. Thus, the interest is in obtaining algorithms
that compute approximations to the permanent, and indeed
a polynomial running time
Markov Chain Monte Carlo
randomized algorithm that evaluates
$\mbox{\rm per}(A)$ (up to $(1+\epsilon)$ multiplicative errors, with
complexity polynomial in $\epsilon$) is available
\cite{JSV}. In practice, however, the running time of such an algorithm, which
is $O(n^{10})$,
still makes it challenging to implement for large $n$. (An alternative, faster
MCMC algorithm is presented in \cite{MRchange}, with claimed running time
of $O(n^7(\log n)^4)$.)

An earlier
simple probabilistic algorithm for the evaluation of $\mbox{\rm per}(A)$
is based on the following observation: if $x_{i,j}$ are i.i.d. zero mean
variables with unit variance
and $X$ is an $n\times n$ matrix with entries $x_{i,j}$, then
an easy computation shows that
\begin{equation}
  \label{GG}
  \mbox{\rm per}(A)=\E(\mbox{\rm det}(A_{1/2}\odot X))^2\,,
\end{equation}
where for any two $n\times m$ matrices
$A,B$,
$D=A \odot B$ denotes their Hadamard, or Schur, product,
i.e. the $n \times m$ matrix with entries
$d_{i,j}=a_{i,j} \cdot b_{i,j}$, and where $A_{1/2}(i,j)=A(i,j)^{1/2}$.
Thus, $\mbox{\rm det}(A_{1/2}\odot X)^2$ is an unbiased estimator of $\text{per}(A)$.
This algorithm was proposed (with $x_{i,j}\in \{-1,1\}$) in \cite{GG}, and
takes advantage of the fact that the
evaluation of determinants is computationally easy via Gaussian
elimination. While we do not discuss computational issues
in this article, we note that the evaluation of the determinant requires
at most $o(n^3)$ arithmetic operations; in terms of bit complexity,
for matrices with integer entries of $k$ bits, there exist algorithms
with complexity $O(n^\alpha k^{1+o(1)})$, with $\alpha<3$, see e.g.
\cite{KV} for a review and the value $\alpha\sim 2.7$.
To avoid rounding errors in the case of real valued random variables
one needs to take $k=n^{1+o(1)}$, yielding a total bit-complexity in that case
smaller than
$o(n^4)$.

Thus, the main question concerning the above algorithm
is the approximation error, and in particular
the concentration of the random variable
$\mbox{\rm det}^2(A_{1/2}\odot X)$ around its mean. For general matrices
$A$ with non-negative entries, Barvinok showed that using standard
Gaussian variables $x_{i,j}$, with probability approaching one, the resulting
multiplicative error is at most exponential in $n$, with sharp constant.
(The constant cannot be improved, as the example of $A$ being
the identity matrix
shows.)

For restricted classes of matrices, better performance is possible.
Thus, in \cite{FJ}, the authors analyzed a variant of the
Godsil-Gutman algorithm due to \cite{KKLLL} and showed that for certain
dense, {\it random} $0-1$ matrices, a multiplicative $(1+\epsilon)$ error
is achieved in time $O(n\omega(n)\epsilon^{-2})$.
In \cite{FRZ}, it is shown that for a restricted class of
non-random matrices, the
performance  achieved by the Barvinok--Godsil-Gutman
estimator is better than in the worst-case scenario.
(Here and
in the rest of this paper, we will refer to the above permanent estimator
with Gaussian entries as the Barvinok--Godsil-Gutman estimator.)
Indeed, if for some fixed constants $\alpha,\beta>0$
one has $a_{i,j}\in [\alpha,\beta]$, then
for any $\delta>0$, with $G$ denoting the standard Gaussian matrix,
$$ \P\left(\frac1n \left| \log\frac{\mbox{\rm det}(A_{1/2}\odot
G)^2}{\mbox{\rm per}(A)}
\right|>\delta\right)\to_{n\to\infty} 0\,,$$
uniformly in $A$; that is, for such matrices
this  estimator
achieves subexponentional (in $n$) errors, with $o(n^3)$ (arithmetic)
running time.
An improved analysis in presented in \cite{CV}, where it is shown  that the
approximation error in the same set of matrices
 is only exponential in $n^{2/3} \log n$.

The class of matrices considered in \cite{FRZ} is somewhat restricted - first,
it does not include incidence matrices of non-trivial graphs, and second,
for such matrices,
as noted in \cite{FRZ},  a polynomial error
deterministic algorithm with running time $O(n^4)$ is available by adapting the
algorithm in \cite{LSW}.
Our goal in this paper is to better understand the properties
of the Barvinok--Godsil-Gutman estimator, and
show that in fact the same analysis applies
for a class of matrices that arise from $(\delta,\kappa)$-{\em broadly
connected}
graphs, i.e. graphs with good expansion properties (see Definition
\ref{def-scon} for a precise definition).
Our first main result concerning permanent estimators  reads as follows.
\begin{theorem} \label{th: graph permanent-intro}
 There exist $C, C', c$ depending only on $\d, \k$ such that for any $\t \ge 1$
and any adjacency matrix $A$ of a $(\d,\k)$-broadly connected graph,
\begin{align}
  \label{eq-120912}
  & \P\left( \left|
   \log{\mbox{\rm det}^2(A_{1/2}\odot G)}
   -\E\log{\mbox{\rm det}^2(A_{1/2}\odot G)}
 \right| >C(\t n \log n)^{1/3} \right) \notag \\
 &\leq \exp(-\t)+ \exp \left(-c \sqrt{n} /\log n  \right).
\end{align}
and
\[
  \E\log{\mbox{\rm det}^2(A_{1/2}\odot G)}
  \le \log \text{\rm per}(A)
   \le \E\log{\mbox{\rm det}^2(A_{1/2}\odot G)} + C' \sqrt{n \log n}.
\]
\end{theorem}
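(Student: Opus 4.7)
The lower bound in the second displayed estimate is immediate from Jensen's inequality applied to the concave function $\log$ and the identity $\E\,\mbox{\rm det}^2(A_{1/2}\odot G)=\mbox{\rm per}(A)$ recorded in~\eqref{GG}. The concentration bound and the matching upper bound on $\log\mbox{\rm per}(A)$ are deeply intertwined: the second is essentially an integrated form of the first, so I would prove the concentration first and then deduce the comparison.

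For the concentration statement, I would analyse $F(G):=\log\mbox{\rm det}^2(A_{1/2}\odot G)=2\sum_{i=1}^n\log\sigma_i(A_{1/2}\odot G)$ by Gaussian concentration. Since $F$ is not globally Lipschitz as a function of $G$ -- the gradient of $\log\sigma_i$ blows up as $\sigma_i\to 0$ -- I would introduce a spectrally truncated proxy
\[
F_\delta(G):=2\sum_{i=1}^n\log\max(\sigma_i,\delta),
\]
which is Lipschitz in $G$ with constant controlled by $\delta$ and by $\|A_{1/2}\|_\infty\le 1$. Gaussian concentration then yields sub-Gaussian fluctuations of $F_\delta$ around its mean. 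The remaining task is to control both $F-F_\delta$ on a high-probability event and $\E F-\E F_\delta$. This is where the small-singular-value estimates established earlier in the paper enter: broad connectedness produces polynomial lower bounds on each small singular value with probability $1-\exp(-c\sqrt{n}/\log n)$, which is the source of the second term in the stated probability bound. Optimising $\delta=\delta(\t,n)$ to balance the Lipschitz rate against the truncation error should produce the cubic exponent $(\t n\log n)^{1/3}$.

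For the upper bound on $\log\mbox{\rm per}(A)-\E F$, I would use the identity $\mbox{\rm per}(A)=\E e^F$. Setting $M:=\E F$ and integrating the tail,
\[
\log\mbox{\rm per}(A)-M=\log\E e^{F-M}\le\log\left(1+\int_0^\infty e^t\,\P(F-M>t)\,dt\right).
\]
Inserting the concentration tail $\P(F-M>t)\le\exp(-ct^3/(n\log n))+\exp(-c\sqrt{n}/\log n)$ from the first part, the first piece of the integrand, $\exp(t-ct^3/(n\log n))$, is maximised near $t\asymp\sqrt{n\log n}$ where it equals $\exp(O(\sqrt{n\log n}))$, producing the claimed $C'\sqrt{n\log n}$. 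The non-decaying second piece of the tail is absorbed by a crude deterministic upper bound on $F$, e.g.\ via Hadamard's inequality applied to $A_{1/2}\odot G$ combined with a polynomial moment bound on $\|G\|$.

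The main obstacle is the passage from $F_\delta$ to $F$ in the concentration step: a single singular value below $\delta$ can in principle make $F_\delta-F$ logarithmically large, so what is really needed are quantitative lower-tail estimates on \emph{each} small $\sigma_{n-i}$, not merely on $\sigma_n$. The paper's first main contribution -- small-singular-value bounds for Gaussian matrices with $(\d,\k)$-broadly connected variance profile -- supplies precisely this input, after which the scheme above goes through.
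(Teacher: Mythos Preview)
Your overall strategy---truncate the logarithm, apply Gaussian concentration, and remove the truncation via the small-singular-value bounds---is the paper's strategy. But the single cutoff $F_\delta=2\sum_i\log(\sigma_i\vee\delta)$ does \emph{not} produce the rate $(\t n\log n)^{1/3}$, and this is where the main idea is missing. With one threshold $\delta$, the Lipschitz constant of $F_\delta$ is $\asymp\sqrt n/\delta$; taking $\delta\asymp l_*/\sqrt n$ so that only the bottom $l_*$ singular values can be affected, Gaussian concentration gives a deviation $\asymp\sqrt\t\cdot n/l_*$, while the truncation error on the good event is $\asymp l_*\log n$. Optimising in $l_*$ yields at best $\t^{1/4}(n\log n)^{1/2}$, strictly weaker than the statement. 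The paper's fix is a \emph{multi-scale} truncation: singular values $s_{n-l}$ with $l\in(n_k,n_{k-1}]$, $n_k=n2^{-4k}$, are cut at their own level $\e_k\asymp n_k/\sqrt n$, and concentration is applied to each block $f_k$ separately. Since $f_k$ depends on only $\asymp n_k$ singular values, its Lipschitz constant is $\asymp\sqrt{n_k}/\e_k\asymp\sqrt{n/n_k}$; the block deviations then sum to $\sqrt{\t n/l_*}$ (not $\sqrt\t\,n/l_*$), and balancing against $l_*\log n$ gives the cube-root rate with $l_*\asymp(\t n)^{1/3}(\log n)^{-2/3}$.

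Two smaller points you gloss over. First, since your center $\E F_\delta$ depends on $\delta=\delta(\t)$, you must compare centers at different scales and ultimately relate them to $\E F$; the paper does this via an overlap argument together with a rough second-moment bound $\E\log^2\det^2 W\le Cn^3$, which is not automatic. Second, for the upper bound on $\log\mbox{per}(A)$ the paper does not integrate the final concentration inequality (whose tail contains the non-decaying term you worry about); instead it re-runs the construction at a \emph{different} truncation scale $l_2\asymp\sqrt{n/\log n}$ and exponentiates the genuinely sub-Gaussian tail of the \emph{truncated} log-determinant, which sidesteps the issue cleanly. Your Laplace-type calculation would recover the $\sqrt{n\log n}$ bound only once the correct cube-root concentration is in hand.
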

For a more refined probability bound see Theorem \ref{th: graph permanent}. Combining the two inequalities of Theorem \ref{th: graph permanent-intro}, we obtain the concentration of the Barvinok-Godsil-Gutman estimator around the permanent.
\begin{corollary}
 Under the assumptions of Theorem \ref{th: graph permanent-intro},
\[
  \P\left( \left| \log\frac{\mbox{\rm det}^2(A_{1/2}\odot G)}
{\mbox{\rm per}(A)} \right| >  2 C' \sqrt{n \log n}  \right)
\leq \exp \left(-c \sqrt{n}/ \log n  \right).
\]
\end{corollary}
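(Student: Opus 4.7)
The plan is to combine the two statements of Theorem \ref{th: graph permanent-intro} via a judicious choice of the free parameter $\t$ and a triangle inequality. Write $L := \log \det^2(A_{1/2}\odot G)$ and $M := \E L$. The bias bound tells us that $|M - \log \text{per}(A)| \le C' \sqrt{n \log n}$, while the concentration bound controls $|L - M|$ in terms of $(\t n \log n)^{1/3}$. To make the two quantities comparable, I would like $(\t n \log n)^{1/3}$ to be at most of order $\sqrt{n \log n}$.

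Concretely, choose $\t = c_0 \sqrt{n \log n}$ where $c_0 > 0$ is a constant to be fixed so that $C (c_0 \sqrt{n \log n} \cdot n \log n)^{1/3} = C c_0^{1/3} \sqrt{n \log n} \le C' \sqrt{n \log n}$; this forces $c_0 \le (C'/C)^3$. With this choice, Theorem \ref{th: graph permanent-intro} gives
\[
  \P\left( |L - M| > C' \sqrt{n \log n} \right) \le \exp(-c_0 \sqrt{n \log n}) + \exp(-c \sqrt{n}/\log n).
\]
On the complementary event, the triangle inequality combined with the deterministic bound $|M - \log \text{per}(A)| \le C' \sqrt{n \log n}$ yields $|L - \log \text{per}(A)| \le 2 C' \sqrt{n \log n}$, which is exactly the event in the corollary.

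It then remains to observe that the right-hand side simplifies. Since $\sqrt{n \log n} \ge \sqrt{n}/\log n$ for all $n$ large enough, the term $\exp(-c_0 \sqrt{n \log n})$ is dominated by $\exp(-c \sqrt{n}/\log n)$, and after possibly shrinking the constant $c$ to absorb the factor of $2$ (or the additive constant in combining the two exponentials), the total failure probability is at most $\exp(-c \sqrt{n}/\log n)$, as claimed.

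There is really no obstacle here: the corollary is a direct combination of the concentration and bias estimates already established in the theorem, and the only substantive choice is calibrating $\t$ so that the sub-Gaussian-like tail in the concentration bound matches, up to constants, the deterministic bias term $C'\sqrt{n \log n}$. The genuinely hard work is in Theorem \ref{th: graph permanent-intro} itself, where the concentration of measure for Gaussian-weighted determinants and the bias estimate (controlled via the singular value bounds from the first half of the paper) are established.
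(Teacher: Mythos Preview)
Your proposal is correct and is exactly the approach the paper has in mind: the corollary is stated immediately after Theorem~\ref{th: graph permanent-intro} with only the remark that it follows by ``combining the two inequalities'' of that theorem, and your calibration $\t = c_0\sqrt{n\log n}$ together with the triangle inequality is precisely how to carry this out.
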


This corollary implies the uniform convergence in probability if we consider a family of  $(\delta,\kappa)$-broadly connected
$n\times n$ bipartite graphs with $n \to \infty$.

\begin{corollary}
  \label{cor-1}
   Let $\text{SC}_{\delta,\kappa,n}$ denote the collection of
adjacency matrices of $(\delta,\kappa)$-broadly connected
$n\times n$ bipartite graphs.
  Let $\{\t_n\}_{n=1}^\infty$ be a sequence of positive numbers such that $\t_n \to \infty$.
Set $\sf_n=  \t_n \sqrt{n \log n}$.
  Then for any $\e>0$,
  \begin{equation}
  \label{eq-per1}
    \lim_{n\to\infty} \sup_{A\in \text{\rm SC}_{\delta,\kappa,n}}
    \P\left(\frac1{\sf_n} \left|
    \log\frac{\mbox{\rm det}^2(A_{1/2}\odot G)}{\mbox{\rm per}(A)}
\right|
    >\e \right)=0\,.
  \end{equation}
\end{corollary}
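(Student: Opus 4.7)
The plan is to deduce this directly from the preceding corollary via a simple rescaling argument; all of the hard work (the uniform (in $A$) tail bound, the fact that the constants depend only on $\d,\k$, and the rate $\sqrt{n \log n}$) has already been done there.

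Fix $\e > 0$. Since $\t_n \to \infty$, there exists $N = N(\e)$ such that $\e \t_n \ge 2 C'$ for every $n \ge N$, where $C'$ is the constant from Theorem \ref{th: graph permanent-intro}. For such $n$, the threshold appearing in (\ref{eq-per1}) satisfies $\e \sf_n = \e \t_n \sqrt{n \log n} \ge 2 C' \sqrt{n \log n}$. Consequently the event
\[
\left\{ \left| \log\frac{\mbox{\rm det}^2(A_{1/2}\odot G)}{\mbox{\rm per}(A)} \right| > \e \sf_n \right\}
\]
is contained in the event
\[
\left\{ \left| \log\frac{\mbox{\rm det}^2(A_{1/2}\odot G)}{\mbox{\rm per}(A)} \right| > 2 C' \sqrt{n \log n} \right\},
\]
whose probability, by the previous corollary, is bounded above by $\exp(-c\sqrt{n}/\log n)$ uniformly in $A \in \text{SC}_{\d,\k,n}$.

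Taking the supremum over $A \in \text{SC}_{\d,\k,n}$ preserves this bound, and since $\exp(-c\sqrt{n}/\log n) \to 0$ as $n \to \infty$, the limit in (\ref{eq-per1}) is zero. I do not anticipate any substantive obstacle: the only point requiring verification is that the constants $c$ and $C'$ supplied by the previous corollary depend only on $\d, \k$ and not on $A$ or $n$, which is already built into Theorem \ref{th: graph permanent-intro}. The role of the assumption $\t_n \to \infty$ is precisely to absorb the fixed multiplicative constant $2 C'$ for every fixed $\e > 0$; no additional probabilistic input is needed.
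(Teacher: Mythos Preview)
Your proposal is correct and matches the paper's approach: the paper does not give a separate proof of this corollary but simply notes that the preceding corollary ``implies the uniform convergence in probability,'' and your argument is exactly the routine verification of that implication.
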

We remark that the error estimate \eqref{eq-per1}
in Corollary \ref{cor-1} is probably
not optimal. Indeed, in the special case $A_{i,j}\equiv 1$, a consequence
of the distributional results concerning matrices with i.i.d.
Gaussian entries \cite{Wilks}, see also
\cite{Goodman}, is that
\eqref{eq-per1} holds with $\sf_n$ satisfying $\sf_n/\log n\to\infty$.
As Theorem \ref{th: graph permanent-intro} shows, the main source of error
is the discrepancy
between $\E \log \mbox{\rm det}^2(A_{1/2}\odot G)$ and
$ \log \E \mbox{\rm det}^2(A_{1/2}\odot G)$.

Our second main result pertains to graphs whose
adjacency matrix $A$ satisfies $\text{per}(A)>0$. For such
matrices, there exists a (polynomial time) scaling algorithm that transforms
$A$ into an (almost) doubly stochastic matrix, see \cite[Pages 552-553]{LSW}.
In particular,
there exists a deterministic algorithm (with running time $O(n^4)$)
that outputs non-negative diagonal matrices $D_1,D_2$ so that
$B=D_1AD_2$ is an approximately doubly stochastic matrix, i.e.
$\sum_i B_{i,j}\in [1/2,2] , \sum_j B_{i,j}\in [1/2,2]$.
(Much more can be achieved, but we do not use that fact.)
Since $\text{per}(A)=\text{per}(B)\cdot \prod_i (D_1(i,i)D_2(i,i))$,
evaluating $\text{per}(A)$ thus reduces to the evaluation of
$\text{per}(B)$.
 The properties of the Barvinok--Godsil-Gutman estimator
 for
 such matrices are given in the following theorem.
\begin{theorem} \label{th: matrix permanent-intro}
 Let $r>0$.
 There exist
 $c, C, C'$ depending only on $r, \d, \k$ with the following property.
 Let $0\leq  b_n\leq n$ be a given sequence.
 Let $B$ be an $n \times n$ matrix with entries
 $0 \le b_{i,j}\leq b_n/n$
such that
 \begin{align*}
    \sum_{i=1}^n b_{i,j} \le 1 \qquad &\text{for all } j \in [n]; \\
    \sum_{j=1}^n b_{i,j} \le 1 \qquad &\text{for all } i \in [n].
 \end{align*}
 Define the bipartite graph $\Gamma=\Gamma_B$ connecting the
vertices $i$ and $j$ whenever $b_{i,j} \ge r/n$,
 and assume that $\Gamma$ is   $(\delta,\kappa)$-broadly connected.
 Then for any
 $\t \geq 1$
\begin{align}
  \label{eq-120912b}
  & \P\left( \left|
   \log{\mbox{\rm det}^2(B_{1/2}\odot G)}
   -\E\log{\mbox{\rm det}^2(B_{1/2}\odot G)}
 \right| >
 C (\t b_n n)^{1/3} \log^{c'} n\right) \notag \\
&\leq \exp(-\t)+ \exp(-C \log^4 n)
\end{align}
and
\begin{multline} \label{eq: perm-expectation-matrices}
  \E\log{\mbox{\rm det}^2(B_{1/2}\odot G)}
  \le \log \text{\rm per}(B)
   \le \E\log{\mbox{\rm det}^2(B_{1/2}\odot G)}
    + C' \sqrt{b_n n} \log^{c'} n.
\end{multline}
\end{theorem}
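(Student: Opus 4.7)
My plan is to adapt the proof of Theorem~\ref{th: graph permanent-intro} to the weighted setting, following the same three-step structure but tracking the entrywise variance bound $b_n/n$ throughout. The three steps are: small singular value estimates for $Y:=B_{1/2}\odot G$, Gaussian concentration of $\log\mbox{\rm det}^2(Y)$, and comparison between its expectation and $\log\mbox{\rm per}(B)$.

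First, I would establish small singular value estimates for $Y$. The hypothesis that the graph $\Gamma_B$ of entries $b_{i,j}\ge r/n$ is $(\d,\k)$-broadly connected means that, up to the rescaling factor $\sqrt{r/n}$, the variance profile of $Y$ restricted to $\Gamma_B$ dominates the adjacency matrix of a broadly connected graph. The main technical singular value theorem of the paper therefore yields, with probability at least $1-\exp(-c\sqrt n/\log^c n)$, lower bounds of the form $s_{n-k+1}(Y)\ge c\sqrt{r/n}\,\f(k/n)$ of the same qualitative shape as in the adjacency case. Entries outside $\Gamma_B$ only increase singular values, with their contribution to the operator norm controlled by $O(\sqrt{b_n}\log n)$ using standard bounds for Gaussian matrices with bounded entrywise variance.

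Second, for the concentration in \eqref{eq-120912b}, I would split $L := \log\mbox{\rm det}^2(Y) = 2\sum_i\log s_i(Y)$ using a smooth cutoff at a threshold $t_0$. The truncated functional $F(G) := 2\sum_i\psi_{t_0}(s_i(Y))$ is globally Lipschitz in $G$ with constant $O(\sqrt{b_n}/t_0)$, since $G\mapsto Y$ has Frobenius Lipschitz constant $\sqrt{b_n/n}$ and $M\mapsto\sum\psi_{t_0}(s_i(M))$ has Frobenius Lipschitz constant $O(\sqrt n/t_0)$ by Weyl. Borell--TIS concentration then gives sub-Gaussian tails for $F$ at scale $\sqrt{b_n}/t_0$. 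The discrepancy $L - F$ coming from singular values below $t_0$ is bounded via the estimates of step~1; choosing $t_0$ of order $(\t b_n n)^{1/3}\cdot n^{-1/2}\log^{O(1)}n$ balances the two error sources and produces the claimed $(\t b_n n)^{1/3}\log^{c'}n$ scaling.

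Third, for the expectation estimate \eqref{eq: perm-expectation-matrices}, the lower bound $\E L\le\log\mbox{\rm per}(B)$ is immediate from Jensen's inequality and $\mbox{\rm per}(B) = \E\mbox{\rm det}^2(Y)$. For the upper bound, write $\mbox{\rm per}(B) = e^{\E L}\,\E e^{L - \E L}$ and bound the second factor by integrating the concentration tail from step~2, with extreme deviations absorbed by Cauchy--Schwarz against a crude Hadamard-type deterministic upper bound on $\mbox{\rm det}^4(Y)$. Integrating the sub-Gaussian tail at the optimal $t_0$ produces the $\sqrt{b_n n}\log^{c'}n$ scaling. The main obstacle will be quantitative: tracking how the range of nonzero variances (from $r/n$ up to $b_n/n$) propagates through the small singular value estimates of step~1 and fixes the correct threshold in step~2. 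This is what sets the polylogarithmic overhead in both \eqref{eq-120912b} and \eqref{eq: perm-expectation-matrices}, and is the key place where this theorem goes beyond Theorem~\ref{th: graph permanent-intro}.
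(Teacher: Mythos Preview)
Your three-step architecture matches the paper's, but step~2 contains a quantitative gap that prevents you from reaching either \eqref{eq-120912b} or \eqref{eq: perm-expectation-matrices}. With a \emph{single} truncation level $t_0$, your Lipschitz computation $\sqrt{b_n}/t_0$ is correct, so Gaussian concentration controls $|F-\E F|$ at scale $\sqrt{\tau b_n}/t_0$. But the truncation error $|L-F|$ is determined by how many singular values fall below $t_0$: since $s_{n-l}(Y)\asymp l/(n\log^{C'}n)$ in this regime, about $l_0:=t_0 n\log^{C'}n$ indices are affected, and $\sum_{l\le l_0}\log(t_0/s_{n-l})\asymp l_0$. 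Balancing $\sqrt{\tau b_n}/t_0$ against $t_0 n$ forces $t_0\asymp(\tau b_n)^{1/4}n^{-1/2}$ and yields a deviation of order $(\tau b_n)^{1/4}n^{1/2}$, which is polynomially larger in $n$ than $(\tau b_n n)^{1/3}$. (Your proposed $t_0\asymp(\tau b_n n)^{1/3}n^{-1/2}$ gives concentration term $(\tau b_n n)^{1/6}$ and truncation term $(\tau b_n n)^{1/3}n^{1/2}$, which do not balance.) The same defect propagates to step~3 and inflates the gap in \eqref{eq: perm-expectation-matrices}. The paper's remedy is a \emph{multi-scale} truncation: the indices $l$ are partitioned into dyadic blocks $[n_k,n_{k-1}]$ with $n_k=n\,2^{-4k}$, and block $k$ is truncated at a level $\e_k$ matched to the typical size of $s_{n-l}$ for $l\sim n_k$. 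Then with high probability the truncation is inactive on every block except the last one of size $l_*$, while the Lipschitz constant of block $k$ is only $\sqrt{n_{k-1}}\,/\e_k\cdot\sqrt{b_n/n}$. Summing the per-block concentration bounds gives $\sqrt{\tau b_n n/l_*}$ (up to logs), and optimizing $l_*$ against the residual $l_*\log n$ produces the cube-root scaling. This dyadic refinement is precisely the improvement over the single-threshold argument of \cite{FRZ} that the paper highlights, and it cannot be skipped here.

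A secondary issue in step~1: the claim that ``entries outside $\Gamma_B$ only increase singular values'' is false---an additive Gaussian perturbation can decrease any singular value. The paper instead writes $Y=Y^{(1)}+Y^{(2)}$ as a sum of independent Gaussians, with $Y^{(1)}$ carrying variance $r/n$ on $\Gamma_B$ and zero elsewhere; after bounding $\|Y^{(2)}\|\le C\log^2 n$ via Lemma~\ref{l: riemer-schutt}, it \emph{conditions} on $Y^{(2)}$ and applies Theorems~\ref{th: smallest singular value}--\ref{th: intermediate singular} to $\sqrt{n}\,Y^{(1)}$ with deterministic shift $\sqrt{n}\,Y^{(2)}$ and $K\asymp\log^2 n$. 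This conditioning, not any monotonicity, is what transfers the singular-value bounds to $Y$ and is the source of the polylogarithmic factors in the theorem.
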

  As  in Theorem \ref{th: graph permanent-intro}, we can derive the concentration around the permanent and the uniform convergence in probability.
\begin{corollary} \label{cor: matrix perm conc}
 Under the assumptions of Theorem \ref{th: matrix permanent-intro},
\[
  \P\left( \left| \log\frac{\mbox{\rm det}^2(B_{1/2}\odot G)}
{\mbox{\rm per}(B)} \right| > 2  C' \sqrt{b_n n} \log^{c'} n \right)
   \leq \exp(-C \log^4 n).
\]
\end{corollary}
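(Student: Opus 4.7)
The plan is to derive Corollary \ref{cor: matrix perm conc} directly from Theorem \ref{th: matrix permanent-intro} by a triangle-inequality decomposition and a judicious choice of the free parameter $\t$. Specifically, I would split
\[
\log \det^2(B_{1/2}\odot G) - \log \text{\rm per}(B) = \bigl(\log \det^2(B_{1/2}\odot G) - \E \log \det^2(B_{1/2}\odot G)\bigr) + \bigl(\E \log \det^2(B_{1/2}\odot G) - \log \text{\rm per}(B)\bigr),
\]
and aim to bound each summand by $C'\sqrt{b_n n}\log^{c'} n$.

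For the deterministic bias term, \eqref{eq: perm-expectation-matrices} supplies the required estimate unconditionally. For the random fluctuation, I would invoke \eqref{eq-120912b} with $\t$ chosen so that the stochastic error $C(\t b_n n)^{1/3}\log^{c'} n$ matches the bias scale $C'\sqrt{b_n n}\log^{c'} n$; this forces $\t \asymp (b_n n)^{1/2}$ up to a constant depending on $C$ and $C'$. Combining the two estimates through the triangle inequality then produces the desired bound $2C'\sqrt{b_n n}\log^{c'} n$ on the favourable event.

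The one point requiring brief care is that this choice of $\t$ should not enlarge the failure probability beyond $\exp(-c\sqrt{n}/\log^c n)$. Since $\Gamma_B$ is $(\d,\k)$-broadly connected, the graph is non-empty, so at least one entry satisfies $b_{i,j}\ge r/n$; combined with the uniform upper bound $b_{i,j}\le b_n/n$, this forces $b_n\ge r$, hence $\t \gtrsim \sqrt{n}$ with implicit constant depending only on $r,\d,\k$. Consequently $\exp(-\t)$ is dominated by $\exp(-c\sqrt{n}/\log^c n)$ after a modest shrinkage of $c$, and the two tails merge into a single exponential. I expect no genuine obstacle in this argument; it is routine bookkeeping once Theorem \ref{th: matrix permanent-intro} is available, the only subtlety being the lower bound on $b_n$ inherited from the broad-connectedness hypothesis used to keep the $\exp(-\t)$ tail harmless.
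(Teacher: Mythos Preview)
Your proposal is correct and follows exactly the route the paper intends: the paper derives this corollary (and its analogue after Theorem \ref{th: graph permanent-intro}) simply by ``combining the two inequalities'' of Theorem \ref{th: matrix permanent-intro}, which amounts precisely to your triangle-inequality split together with the choice $\t\asymp (b_n n)^{1/2}$. Your observation that broad connectedness forces $b_n\ge r$, hence $\t\gtrsim \sqrt{n}$, is the right way to absorb the $\exp(-\t)$ term into the stated bound.
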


\begin{corollary}
  \label{cor-2}
  Let $\text{GSC}_{c,\delta,\kappa,n}$ denote the collection
  of $n \times n$ matrices $B$ with properties as
  in Theorem \ref{th: matrix permanent-intro}.
   Then there exists a constant
  $\bar C=\bar C(c,\delta,\kappa)$ so that with
  $\sf_n=(n b_n \log^{\bar C} n )^{1/2} $,
  and any $\e>0$,
  \begin{equation*}
    \lim_{n\to\infty} \sup_{B\in \text{\rm GSC}_{c,\delta,\kappa,n}}
    \P\left(\frac1{\sf_n} \left|
    \log\frac{\mbox{\rm det}^2(B_{1/2}\odot G)}{\mbox{\rm per}(B)}
\right|
    >\e \right)=0\,.
  \end{equation*}
\end{corollary}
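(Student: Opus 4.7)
The plan is to derive Corollary \ref{cor-2} as an essentially immediate repackaging of Corollary \ref{cor: matrix perm conc}. The decisive observation is that, by the way constants are quantified in Theorem \ref{th: matrix permanent-intro}, the parameters $c, c', C'$ appearing in Corollary \ref{cor: matrix perm conc} depend only on $r, \d, \k$; in particular they are independent of the individual matrix $B$ within the family $\text{GSC}_{c,\d,\k,n}$, of the sequence $\{b_n\}$, and of $n$. So the bound supplied by that corollary is already uniform over $B \in \text{GSC}_{c,\d,\k,n}$.

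Given this uniformity, I would choose $\bar C$ to be any constant strictly greater than $2c'$, say $\bar C = 2c' + 1$, so that $\log^{\bar C/2} n / \log^{c'} n \to \infty$ as $n \to \infty$. Fix $\e > 0$. Then there exists $n_0 = n_0(\e, C', c', \bar C)$, independent of $B$ and of $b_n$, such that for all $n \ge n_0$,
\[
2 C' \sqrt{b_n n}\, \log^{c'} n \;\le\; \e \sqrt{b_n n}\, \log^{\bar C/2} n \;=\; \e\, \sf_n.
\]
Consequently the event $\{\,|\log(\mbox{\rm det}^2(B_{1/2}\odot G)/\text{\rm per}(B))| > \e\, \sf_n\,\}$ is contained in the event controlled by Corollary \ref{cor: matrix perm conc}, yielding
\[
\sup_{B \in \text{GSC}_{c,\d,\k,n}} \P\left(\frac{1}{\sf_n}\left|\log \frac{\mbox{\rm det}^2(B_{1/2}\odot G)}{\text{\rm per}(B)}\right| > \e\right) \le \exp\left(-c\sqrt n / \log^c n\right),
\]
and the right-hand side tends to $0$ as $n \to \infty$.

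There is essentially no obstacle in this argument; Corollary \ref{cor-2} is a purely qualitative repackaging of the quantitative Corollary \ref{cor: matrix perm conc} as a uniform-in-$B$ convergence statement, with the extra logarithmic factor absorbed by enlarging the exponent from $c'$ to $\bar C/2$. The only thing one must verify --- and it is built into the statement of Theorem \ref{th: matrix permanent-intro} --- is that the constants $C', c, c'$ and the threshold $n_0$ do not degrade as one varies $B$ inside the class or $b_n$ within the prescribed range, which is clear from the quantifier structure of the cited results.
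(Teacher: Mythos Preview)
Your proposal is correct and matches the paper's intent: the paper does not give an explicit proof of Corollary \ref{cor-2}, stating only that it follows from Theorem \ref{th: matrix permanent-intro} just as Corollary \ref{cor-1} follows from Theorem \ref{th: graph permanent-intro}. Your derivation from Corollary \ref{cor: matrix perm conc}, with the key observation that the constants $C', c, c'$ depend only on $r, \d, \k$ and hence the bound is uniform over $\text{GSC}_{c,\d,\k,n}$, is exactly the intended argument.
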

Corollary \ref{cor: matrix perm conc} applies, in particular, to approximately
doubly stochastic matrices $B$
whose entries satisfy $c/n \le b_{i,j} \le 1$ for all $i,j$.
For such matrices the graph $\Gamma_A$ is complete, so the broad
connectedness condition is trivially satisfied.
Note that if such matrix contains entries of order
$\Omega(1)$, then the algorithm of \cite{LSW} estimates
the permanent with an error exponential in $n$.
In this case, $b_n=\Omega(n)$, and  Corollary \ref{cor: matrix perm conc} is
weaker than
Barvinok's theorem in \cite{Bar}. This is due to the fact that we do not have a good bound for the gap between $\E\log{\mbox{\rm det}^2(B_{1/2}\odot G)}$ and $\log \text{\rm per}(B)$, see \eqref{eq: perm-expectation-matrices}. However, this bound cannot be significantly improved in general, even for
well-connected matrices.
As we show in Lemma \ref{l: example}, the gap between these values is of order $\Omega(n)$ for a
matrix with all diagonal entries equal $1$ and all off-diagonal entries
equal $c/n$. For such
a matrix, the Barvinok--Godsil-Gutman estimator will fail consistently,
i.e., it will be concentrated around a value, which is $\exp(cn)$ far away from the permanent. Thus, we conclude that for almost doubly stochastic matrices with a broadly connected graph
the Barvinok--Godsil-Gutman  estimator either approximates the permanent up to  $\exp(o(n))$ with high probability, or  yields an exponentially big error with high probability.

As in \cite{FRZ}, Theorems \ref{cor-1} and \ref{cor-2}
depend on concentration of linear statistics
of the spectrum of random (inhomogeneous)
Gaussian matrices; this in turn require
a good control
on small singular values of such matrices. Thus, the first
part
of the current paper deals with the latter question, and proceeds as follows.
In Section \ref{sec-def} we define the notion of
broadly connected bipartite graphs, and state our main results
concerning small singular values of Gaussian matrices, Theorems
\ref{th: smallest singular value} and
\ref{th: intermediate singular}; we also state applications
of the latter theorems to both
adjacency graphs and to ``almost'' doubly stochastic matrices, see
Theorems \ref{th: graph matrix} and \ref{th: doubly stochastic}.
Section \ref{sec-net} is devoted to several preliminary lemmas involving $\e$-net
arguments. In Section \ref{sec-comp} we recall the notion of compressible
vectors and obtain estimate on the norm of Gaussian matrices restricted to
compressible vectors. The control of the minimal singular value
(that necessitates the study of incompressible vectors)
is obtained in Section \ref{sec-ssv}, while Section \ref{sec-isv} is devoted
to the study of intermediate singular values.
In Section \ref{sec-app}, we return to the analysis of the
Barvinok--Godsil-Gutman estimator, and use the control on singular
values together with an improved (compared to \cite{FRZ}) use of
concentration inequalities to prove the
applications and  the main theorems in the introduction.

\noindent
{\bf Acknowledgment}
We thank A. Barvinok and A. Samorodnitsky for sharing with us their knowledge
of permanent approximation algorithms, and for useful suggestions. We also
thank U. Feige for a useful suggestion.
\section{Definitions and results}
\label{sec-def}
For a matrix $A$ we denote its operator norm by $\norm{A}$,
and set $\norm{A}_{\infty}=\max |a_{i,j}|$.
By $[n]$ we denote the set $\{1 \etc n\}$.
By $\lfloor t \rfloor$ we denote the integer part of $t$.

 Let
$J \subset [m]$. Denote by $\R^J$ and $S^J$ the coordinate subspace of $\R^m$ corresponding to $J$ and its unit sphere.

For a left vertex $j \in [m]$ and a right vertex $i \in [n]$
of a bipartite graph $\Gamma=([m],[n], E)$
we write $j \to i$ if $j$ is connected to $i$.
\begin{definition}
  \label{def-scon}
 Let $\d ,\k>0, \ \d/2>\k$. Let $\Gamma$ be an $m \times n$ bipartite graph. We will say that $\Gamma$ is $(\d,\k)$-{\em broadly connected} if
 \begin{enumerate}
  \item $\text{deg}(i) \ge \d m$ for all $i \in [n]$;
  \item $\text{deg}(j) \ge \d n$ for all $j \in [m]$;
  \item for any set $J \subset [m]$  the set of its {\em broadly connected} neighbors
  \[
    I(J)=\{i \in [n] \mid j \to i \text{ for at least }  \lfloor (\d/2) \cdot |J| \rfloor \text{ numbers } j \in J \}
  \]
  has cardinality $|I(J)| \ge \min \big((1+\k) |J|, n \big)$.
 \end{enumerate}
\end{definition}
 We fix the numbers $\d, \k$ and call such graph  broadly connected.
Property (3) in this definition is similar to the expansion property of the graph.
In the argument below we denote by  $C,c$, etc.  constants depending on
the parameters
$\d, \k$ and $r$ appearing in Theorems
\ref{th: smallest singular value}
and
\ref{th: intermediate singular}.
The values of these constants may change from line to line.

Although condition (3) is formulated for all sets $J \subset [m]$,
it is enough to check it only for
sets with cardinality $|J| \le (1-\d/2)m$. Indeed, if $|J|> (1-\d/2)m$, then any $i \in [n]$ is broadly connected to $J$.
\begin{definition}
 Let $A$ be an $m \times n $ matrix. Define the graph $\Gamma_A=([m],[n], E)$ by setting $j \to i$ whenever $a_{j,i} \neq 0$.
\end{definition}

We will prove two theorems bounding the singular values of a matrix with normal entries.
In the theorems, we allow for non-centered entries because it will be useful
for the application of the theorem in the proof of Theorem
\ref{th: doubly stochastic}
\begin{theorem}  \label{th: smallest singular value}
 Let  $W$ be an $n \times n$ matrix with independent normal entries $w_{i,j} \sim N(b_{i,j}, a_{i,j}^2)$.
 Assume that
 \begin{enumerate}
   \item  $a_{i,j} \in \{0\} \cup [r, 1]$ for some constant $r>0$ and all $i,j$;
   \item  the graph $\Gamma_A$ is broadly connected;
   \item $\norm{\E W} \le K \sqrt{n}$ for some $K \ge 1$.
 \end{enumerate}
  Then for any $t>0$
  \[
    \P (s_n(W) \le c t K^{-C} n^{-1/2})
     \le t+ e^{-c' n}.
  \]
\end{theorem}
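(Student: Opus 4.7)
The plan is to follow the Rudelson--Vershynin paradigm for lower bounds on the smallest singular value, splitting the unit sphere of $\R^n$ into compressible and incompressible vectors and handling each regime separately. Throughout, I would use hypothesis~(3) together with the standard concentration of the operator norm of a centered Gaussian matrix with entry variances at most~$1$ to deduce $\|W\| \le CK\sqrt n$ outside an event of probability at most $e^{-cn}$. For the compressible part, the estimates established in Section~\ref{sec-comp} apply directly to give $\inf_{x\ \text{compressible}}\|Wx\| \ge \tilde c\sqrt n$ outside an event of probability $\le e^{-c'n}$: broad connectedness of $\Gamma_A$ ensures that for every sparse direction $x$ a non-negligible fraction of rows contributes positive variance to $\langle R_i,x\rangle$, and an $\eta$-net argument of resolution comparable to $1/(K\sqrt n)$ handles the quantification over $x$, with the $K$-dependence contributing a factor $K^{-C}$ to the final bound.

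For the incompressible regime (with incompressibility parameter $c_0 > 1/(1+\kappa)$, which I can arrange by invoking the compressible estimates of Section~\ref{sec-comp} with suitably large $c_0$), I would use the Rudelson--Vershynin averaging lemma to reduce the estimate to
\[
\P\Bigl(\inf_{x\ \text{incompressible}}\|Wx\| \le c_1 \varepsilon/\sqrt n\Bigr) \ \le\ \frac{1}{c_0 n}\sum_{k=1}^n \P\bigl(\mathrm{dist}(X^{(k)},H^{(k)}) \le \varepsilon\bigr),
\]
where $X^{(k)}$ is the $k$-th column of $W$ and $H^{(k)} = \mathrm{span}(X^{(j)}:j\ne k)$. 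Writing $\mathrm{dist}(X^{(k)},H^{(k)}) = |\langle X^{(k)}, Y^{(k)}\rangle|$ for $Y^{(k)}$ a unit normal to $H^{(k)}$ (independent of $X^{(k)}$ by independence of columns), the task splits into (i) proving that $Y^{(k)}\in\R^n$ is itself incompressible with probability $1-e^{-cn}$, and (ii) given an incompressible realization $Y^{(k)}=v$ with spread set $L\subseteq[n]$ of size at least $c_0 n>n/(1+\kappa)$, applying broad connectedness with $J=L$ (permissible because $v$ is row-indexed and $[m]=[n]$) to obtain $|I(L)|=n$. Thus every column $k$ is broadly connected to $L$, and in particular at least $(\delta/2)c_0 n$ indices $i\in L$ satisfy $a_{i,k}\ge r$, so that
\[
\mathrm{Var}\bigl(\langle X^{(k)},v\rangle \bigm| Y^{(k)}=v\bigr) \ =\ \sum_i a_{i,k}^2 v_i^2 \ \ge\ r^2 (c_1/\sqrt n)^2 \cdot (\delta/2)c_0 n \ =\ c(\delta,\kappa,r,c_0,c_1)>0.
\]
Gaussian anti-concentration, which is insensitive to the Gaussian mean $\sum_i b_{i,k}v_i$, then yields $\P(|\langle X^{(k)},v\rangle|\le\varepsilon\mid Y^{(k)}=v)\le C\varepsilon$. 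Summing over $k$ and absorbing the ``compressible null-vector'' event gives $\sum_k\P(\mathrm{dist}(X^{(k)},H^{(k)})\le\varepsilon)\le Cn\varepsilon+ne^{-cn}$, and hence $\P(\inf_x\|Wx\|\le c_1\varepsilon/\sqrt n) \le Cc_0^{-1}\varepsilon+e^{-c'n}$.

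The main technical obstacle is step (i), the incompressibility of $Y^{(k)}$. This reduces to showing that no unit compressible vector lies in (or close to) the kernel of $(W_{-k})^T$, which in turn follows from the compressible bound of Section~\ref{sec-comp} applied to the $n\times(n-1)$ submatrix $W_{-k}$; this submatrix inherits broad connectedness from $\Gamma_A$ with at most a bounded degradation of $\delta,\kappa$ after the removal of a single column (the degree conditions decrease by at most one, and property~(3) needs only be checked for $|J|\le(1-\delta/2)(n-1)$, so the same inequality persists with constants close to the original). The $K^{-C}$ factor in the theorem enters precisely here: controlling compressibility of $Y^{(k)}$ via an $\eta$-net of the compressible sphere of cardinality $\sim(CK\sqrt n/\eta)^{c_0 n}$ forces the scale $\varepsilon\lesssim tK^{-C}/\sqrt n$ with an exponent $C=C(\delta,\kappa,r)$ absorbing the polynomial-in-$K$ net cost. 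Rescaling $\varepsilon$ in the averaging bound to match $t$ then yields the claimed estimate $\P(s_n(W)\le c\,tK^{-C}n^{-1/2})\le t+e^{-c'n}$.
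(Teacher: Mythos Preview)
Your proposal follows the same compressible/incompressible scheme as the paper: bound $\|W\|$ via Gaussian concentration, dispose of compressible $x$ using the small-ball estimates of Section~\ref{sec-comp}, reduce the incompressible regime to $\dist(W_k,H_k)$ via Lemma~\ref{l: via distance}, establish incompressibility of the unit normal $Y^{(k)}$ by applying the compressible bounds to $(W_{-k})^T$, and finish with a Gaussian small-ball estimate for $\langle X^{(k)},Y^{(k)}\rangle$. Two points, however, separate your writeup from the paper's argument.

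First, you conflate the outer compressibility parameter (for $x$) with the incompressibility parameter for $Y^{(k)}$, insisting that both equal some $c_0>1/(1+\kappa)$ ``by invoking the compressible estimates of Section~\ref{sec-comp} with suitably large $c_0$.'' This is neither necessary nor justified as stated: Lemma~\ref{l: all compressible} with large parameter requires $\Gamma_{A^T}$ to be broadly connected, which Theorem~\ref{th: smallest singular value} does \emph{not} assume---only $\Gamma_A$ is. The paper instead uses Lemma~\ref{l: very sparse} (which needs only a degree condition) with a small $c_0$ for the outer decomposition, and reserves the large parameter $1-\kappa/2$ for $Y^{(k)}$, where Lemma~\ref{l: all compressible} is applied to $\tilde W$ and the relevant graph $\Gamma_{\tilde A^T}=\Gamma_{A_{-k}}$ does inherit broad connectedness from $\Gamma_A$. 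Since the averaging lemma works for any $c_0>0$, decoupling the two parameters costs nothing and removes the unjustified hypothesis.

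Second, for the variance lower bound on $\langle X^{(k)},v\rangle$ you invoke property~(3) of broad connectedness to force $I(L)=[n]$ once $|L|>n/(1+\kappa)$. This is valid, but the paper takes a shorter route using only property~(1): the spread set $L$ of $v$ has $|L|\ge(1-\kappa/2)n$, column $k$ has degree $\ge\delta n$ in $\Gamma_A$, and inclusion--exclusion gives $|L\cap\{i:a_{i,k}\neq0\}|\ge(\delta-\kappa/2)n>(\delta/2)n$, which already suffices for the variance bound.
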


\begin{theorem}  \label{th: intermediate singular}
 Let $n/2< m \le n-4$,  and let $W$  be an $n \times m$ matrix with independent normal entries $w_{i,j} \sim N(b_{i,j}, a_{i,j}^2)$.
 Assume that
 \begin{enumerate}
   \item  $a_{i,j} \in \{0\} \cup [r, 1]$ for some constant $r>0$ and all $i,j$;
   \item  the graph $\Gamma_A$ is broadly connected;
   \item $\norm{\E W} \le K \sqrt{n}$.
 \end{enumerate}
  Then for any $t>0$
  \[
    \P \left(s_m(W) \le c t K^{-C} \cdot \frac{n-m}{\sqrt{n}} \right)
     \le t^{(n-m)/4}+ e^{-c' n}.
  \]
\end{theorem}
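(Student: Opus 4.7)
The plan is to adapt the strategy of Theorem~\ref{th: smallest singular value}, with an additional ingredient exploiting the $n-m \ge 4$ extra rows to amplify the small-ball probability.

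First I would decompose $S^{m-1}$ into compressible and incompressible vectors. The compressible part is handled by the results of Section~\ref{sec-comp}, contributing the $e^{-c'n}$ term. For a fixed incompressible $x$, the vector $Wx$ is Gaussian in $\R^n$ with diagonal covariance $\mathrm{diag}(\sigma_i(x)^2)$, where $\sigma_i(x)^2 = \sum_j a_{ij}^2 x_j^2$, and broad connectedness of $\Gamma_A$ guarantees that $\sigma_i(x)^2 \ge c'$ for a positive fraction of $i$'s. The key new ingredient, compared to Theorem~\ref{th: smallest singular value}, is to use the codimension $n-m+1$ of the column span to obtain an $(n-m+1)$-dimensional small-ball estimate rather than the one-dimensional one sufficient in the square case.

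For this I would use the distance-to-subspace inequality $\|Wx\| \ge |x_j|\,\mathrm{dist}(W^{(j)}, H_j)$ with $H_j$ the span of the other $m-1$ columns. Conditionally on $H_j$, the quantity $\mathrm{dist}(W^{(j)}, H_j) = \|P_{H_j^\perp} W^{(j)}\|$ is the norm of a Gaussian vector in the $(n-m+1)$-dimensional subspace $H_j^\perp$ with covariance $P_{H_j^\perp}\Sigma_j P_{H_j^\perp}$; a lower bound on the minimum eigenvalue of this restricted covariance then yields a small-ball estimate of the form $\P(\mathrm{dist}(W^{(j)}, H_j) \le \tau) \le (C\tau)^{n-m+1}$. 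The hypothesis $\|\E W\| \le K\sqrt{n}$ enters through a straightforward translation between centered and non-centered Gaussian matrices and accounts for the factor $K^{-C}$. Combining this small ball with the lower bound $|x_{j^*}| \ge c/\sqrt{m}$ from incompressibility (for some well-chosen index $j^* = j^*(x)$), a standard $\epsilon$-net on the incompressible part of $S^{m-1}$, and the operator-norm bound $\|W\| \le C(K+1)\sqrt{n}$ (valid off an event of probability $\le e^{-cn}$), produces the desired uniform estimate with the claimed exponent.

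The main obstacle will be establishing the lower bound on $\lambda_{\min}(P_{H_j^\perp}\Sigma_j P_{H_j^\perp}|_{H_j^\perp})$: equivalently, one must show that \emph{every} unit vector $u$ in the random subspace $H_j^\perp$ is sufficiently spread on the support of the $j$-th column, i.e.\ $\sum_i a_{ij}^2 u_i^2 \ge c'$. In the square case $m = n$ this subspace is one-dimensional and the statement reduces to the incompressibility of a single normal vector, as in Theorem~\ref{th: smallest singular value}. For $m < n$ it becomes a uniform statement over an entire $(n-m+1)$-dimensional subspace, requiring either an iteration of the square-case incompressibility argument on the restricted sphere of $H_j^\perp$, or a union bound over a net of that sphere coupled with the fact that $H_j$ itself is random (depending on the other columns of $W$, and independent of $W^{(j)}$). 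The factor $1/4$ in the exponent $(n-m)/4$ presumably accumulates from unavoidable inefficiencies in this iteration and in the $\epsilon$-net approximation step.
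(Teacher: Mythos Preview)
Your high-level plan is sound, but the single-column reduction you describe cannot deliver the theorem as stated. There are two related gaps.

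First, an $\e$-net on the incompressible part of $S^{m-1}$ has cardinality of order $(C/\e)^m$ with $m > n/2$, far too large to be absorbed by a small-ball probability of order $(C\tau)^{n-m+1}$; the union bound will not close. In the square case this difficulty is sidestepped by the averaging Lemma~\ref{l: via distance}, not a net, but even that route hits a second obstacle here.

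Second, and more fundamentally, the bound $\|Wx\| \ge |x_{j^*}|\,\dist(W^{(j^*)}, H_{j^*})$ with $|x_{j^*}| \gtrsim 1/\sqrt{m}$ and $\dist(W^{(j)}, H_j) \asymp \sqrt{n-m}$ only yields $s_m(W) \gtrsim \sqrt{(n-m)/n}$, short of the claimed scale $(n-m)/\sqrt{n}$ by a factor $\sqrt{n-m}$. The paper recovers this missing factor via Lemma~\ref{l: via dist multidim} (the multi-dimensional distance lemma from \cite{RV2}): instead of a single column one removes a \emph{block} $J \subset [m]$ of $d = \lfloor (n-m)/2 \rfloor$ columns and bounds $\inf_{z \in \mathcal{S}^J_{q,Q}} \dist(Wz, H_{J^c})$ uniformly over totally-spread vectors $z$ supported on $J$. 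The $\e$-net now lives on a $d$-dimensional set, so its cardinality $(C/\e)^d$ is commensurate with the $(n-m)$-dimensional small ball, and the factor $\sqrt{d/n}$ in the reduction combines with $\e \asymp t\sqrt{n-m}$ to produce the correct scale. The exponent $(n-m)/4$ arises precisely from balancing a net of dimension $d \approx (n-m)/2$ against a small ball of dimension $\dim F \ge n-m$, not from accumulated ``inefficiencies''.

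The obstacle you isolate --- showing that every unit vector in the random orthogonal complement is spread on the relevant support --- is indeed the right one, and the paper handles its analogue for $F = (H_{J^c} \cup B\R^J)^\perp$ by applying Lemma~\ref{l: all compressible} to the transpose of the restricted matrix (forcing $F \cap S^{n-1}$ into the incompressible set) together with the projection estimate of Lemma~\ref{l: projection measure}. But resolving this obstacle within the single-column framework would still leave you with the wrong scale.
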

In Theorems~\ref{th: smallest singular value}, \ref{th: intermediate singular} we assume that the graph $\Gamma_A$ is broadly connected. This condition can be relaxed. In fact, property (3) in the definition of broad connectedness is used only for sets $J$ of cardinality $|J| \ge (r^2 \d/6) m$ (see Lemmas~\ref{l: sparse vector} and \ref{l: very sparse} for details).

We apply Theorems \ref{th: smallest singular value} and \ref{th: intermediate singular} to two types of matrices. Consider first the situation when the matrix $A$ is an adjacency matrix of a graph, and $\E W=0$.
\begin{theorem}  \label{th: graph matrix}
 Let $\G$ be a broadly connected $n \times n$ bipartite graph, and let $A$ be its adjacency matrix. Let $G$ be the $n \times n$ standard Gaussian matrix. Then for any $t>0$
  \[
    \P (s_n(A \odot G) \le c t  n^{-1/2})
     \le t+ e^{-c' n},
  \]
  and for any $n/2< m<n-4$
  \[
    \P \left(s_m(A \odot G) \le c t \cdot \frac{n-m}{\sqrt{n}} \right)
     \le t^{(n-m)/4}+ e^{-c' n}.
  \]
\end{theorem}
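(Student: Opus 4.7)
The plan is that Theorem~\ref{th: graph matrix} is essentially a direct specialization of Theorems~\ref{th: smallest singular value} and~\ref{th: intermediate singular} to the particularly clean case at hand, so the proof reduces to verifying the three hypotheses of those theorems for $W := A \odot G$ and invoking each of them in turn.

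First I would identify the structure of $W$. Since $A$ is the adjacency matrix of $\G$, its entries lie in $\{0,1\}$, so the entries of $W$ are independent Gaussians with $w_{i,j} \sim N(0, a_{i,j}^2)$ where $a_{i,j} \in \{0,1\}$. This verifies hypothesis (1) of both theorems with $r = 1$: the standard deviations lie in $\{0\} \cup [1,1]$. Hypothesis (2) holds trivially because the sparsity pattern of $W$ is exactly $\G$, so the associated graph $\Gamma_A$ coincides with $\G$ and is broadly connected by assumption. Finally, since each $w_{i,j}$ is centered, $\E W = 0$ and hence $\|\E W\| = 0 \le \sqrt n$, so hypothesis (3) holds with $K = 1$.

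With the hypotheses verified and $K = 1$, I would then substitute directly. Theorem~\ref{th: smallest singular value} applied to $W$ yields
\[
  \P\bigl( s_n(A \odot G) \le c t n^{-1/2} \bigr) \le t + e^{-c' n},
\]
which is the first bound. For $n/2 < m < n - 4$, Theorem~\ref{th: intermediate singular} applied to the $n \times m$ submatrix (obtained by selecting any $m$ of the columns of $W$; note that $s_m$ of the full matrix coincides with the smallest singular value of such a submatrix for a suitable choice) gives
\[
  \P\!\left( s_m(A \odot G) \le c t \cdot \frac{n-m}{\sqrt n} \right) \le t^{(n-m)/4} + e^{-c' n},
\]
which is the second bound.

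There is no real obstacle: everything hinges on recognizing that the centered, $0/1$-structured case is the cleanest setting for the two singular value theorems, and the constants $c, c'$ produced depend only on $\d, \k$ (since $r$ and $K$ have been fixed to $1$), so they match the dependence claimed in the theorem statement.
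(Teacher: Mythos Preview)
Your approach is essentially the same as the paper's: specialize Theorems~\ref{th: smallest singular value} and~\ref{th: intermediate singular} with $r=1$, $K=1$, $\E W = 0$, and for the second bound pass to an $n\times m$ column submatrix. The one slip is in your justification for that passage: it is not true that $s_m(A\odot G)$ \emph{coincides} with the smallest singular value of an $m$-column submatrix ``for a suitable choice''. What does hold (by Cauchy interlacing, equivalently the min--max characterization of singular values) is that for \emph{every} $n\times m$ column submatrix $W'$ one has $s_m(W') \le s_m(A\odot G)$, and hence
\[
\P\bigl(s_m(A\odot G)\le x\bigr)\ \le\ \P\bigl(s_m(W')\le x\bigr).
\]
The paper uses exactly this inequality, taking $W'$ to be the first $m$ columns of $A\odot G$. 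With this correction your argument matches the paper's.
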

 Theorem \ref{th: graph matrix} is also applicable to the case when $\G$ is an unoriented graph with $n$ vertices. In this case we denote by $A$ its adjacency matrix, and assume that the graph $\G_A$ is broadly connected.
\begin{remark}
 With some additional effort the bound $m<n-4$
 in Theorem
 \ref{th: graph matrix}
 can be eliminated, and the term $ t^{(n-m)/4}$ in the right hand side can be replaced with $t^{n-m+1}$.
\end{remark}

The second application pertains to ``almost''
doubly stochastic matrices, i.e. matrices with uniformly bounded norms of
rows and columns.
\begin{theorem}  \label{th: doubly stochastic}
 Let $W$ be an $n \times n$ matrix with independent normal entries $w_{i,j} \sim N(0, a_{i,j}^2)$. Assume that the matrix of
 variances
$(a_{i,j}^2)_{i,j=1}^n$ satisfies the conditions
 \begin{enumerate}
   \item $ \sum_{i=1}^n a_{i,j}^2 \le C$ for any $j \in [n]$, and
   \item $ \sum_{j=1}^n a_{i,j}^2 \le C$ for any $i \in [n]$.
 \end{enumerate}
Consider an $n \times n$ bipartite graph $\G$ defined as follows:
 \[
  i \to j, \text{ whenever } \frac{c}{n} \le a_{i,j}^2,
 \]
  and assume that $\G$ is broadly connected.
 Then for any $t>0$
  \[
    \P (s_n(W) \le c t  n^{-1} \log^{-C'}n)
     \le t+ \exp(-C \log^4 n),
  \]
  and for any $n/2< m<n-4$
  \[
    \P \left(s_m(W) \le c t \cdot \frac{n-m}{n \log^{C'} n} \right)
     \le t^{(n-m)/4}+ \exp(-C \log^4 n).
  \]
\end{theorem}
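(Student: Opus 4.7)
The plan is to reduce Theorem~\ref{th: doubly stochastic} to Theorems~\ref{th: smallest singular value} and \ref{th: intermediate singular} by a three-scale decomposition of the variance profile together with a conditioning argument. The two extreme scales will be absorbed into the mean term that the earlier theorems explicitly allow (this is why those theorems were stated with a nontrivial $\E W$).

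Fix a large constant $C_0$ to be chosen in terms of $\d,\k,c,C$. Partition the set of index pairs into a \emph{small} class with $a_{i,j}^2<c/n$, a \emph{moderate} class with $c/n\le a_{i,j}^2\le (\log n)^{C_0}/n$, and a \emph{large} class with $a_{i,j}^2>(\log n)^{C_0}/n$. Write $W=W_{\rm sm}+W_{\rm mod}+W_{\rm lg}$ for the corresponding independent Gaussian summands. The column and row variance bounds force the large class to contain at most $Cn/(\log n)^{C_0}$ entries per row and per column, so the support graph $\G_{\rm mod}$ differs from $\G$ by at most that many edges per vertex. Using the remark following Definition~\ref{def-scon} that property~(3) need only be checked for $|J|\ge (r^2\d/6)m$, a direct counting argument shows that $\G_{\rm mod}$ remains $(\d',\k')$-broadly connected, with only slightly worsened constants, once $C_0$ is large enough. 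Meanwhile, standard inhomogeneous Gaussian operator-norm estimates combined with Gaussian concentration yield $\norm{W_{\rm sm}}+\norm{W_{\rm lg}}\le \log^2 n$ with probability at least $1-\exp(-c\log^4 n)$; call this the good event, and set $M:=W_{\rm sm}+W_{\rm lg}$.

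On the good event, I condition on $M$: the conditional law of $W=W_{\rm mod}+M$ is Gaussian with mean $M$ and with variances supported on $\G_{\rm mod}$ and lying in $\{0\}\cup[c/n,(\log n)^{C_0}/n]$. Rescaling by the factor $\sqrt{n/(\log n)^{C_0}}$ produces a matrix $\tilde W$ whose nonzero variances lie in $\{0\}\cup[r^2,1]$ with $r=\sqrt{c}/(\log n)^{C_0/2}$, and whose mean has operator norm at most $\sqrt{n/(\log n)^{C_0}}\cdot\log^2 n\le \sqrt{n}$ for $C_0\ge 4$. Thus Theorem~\ref{th: smallest singular value} applies with $K=1$ and the $n$-dependent parameter $r$, giving $s_n(\tilde W)\ge c(r)\cdot t/\sqrt{n}$ with probability $\ge 1-t-e^{-c'n}$. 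Tracking the polynomial dependence on $r$ and rescaling back produces the claimed $s_n(W)\ge c\,t/(n(\log n)^{C'})$ bound; combining failure probabilities gives the advertised $t+\exp(-c\log^4 n)$. The intermediate singular value estimate follows by the identical scheme with Theorem~\ref{th: intermediate singular} in place of Theorem~\ref{th: smallest singular value}, and the extra $(n-m)/\sqrt{n}$ scaling carries through unchanged.

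The main technical obstacle is that the variance-lower-bound parameter $r=\sqrt{c}/(\log n)^{C_0/2}$ tends to zero with $n$, whereas the constants $c,C$ in Theorems~\ref{th: smallest singular value} and \ref{th: intermediate singular} depend on $r$. To obtain only a polylogarithmic loss in the final estimate, one must extract from the proofs in Sections~\ref{sec-net}--\ref{sec-isv} a quantitative polynomial dependence on $1/r$ in these constants; this is the reason those proofs are carried out for a general $r$ rather than a fixed absolute constant. A lesser technical point is verifying the broad connectedness of $\G_{\rm mod}$ from that of $\G$, which, as indicated above, is a straightforward counting argument thanks to the sparsity of the large class and the remark restricting where property~(3) must hold.
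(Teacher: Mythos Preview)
Your high-level strategy matches the paper's: split $W$ into a ``core'' Gaussian piece with controlled variance profile and an ``extra'' piece whose operator norm is bounded by Lemma~\ref{l: riemer-schutt}, then condition on the extra piece and apply Theorems~\ref{th: smallest singular value}--\ref{th: intermediate singular} with that piece playing the role of the mean $B=\E W$. The difference is in the decomposition itself, and the paper's choice is considerably simpler.

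The paper uses only \emph{two} pieces: it sets $\mathrm{Var}(W^{(1)}_{i,j})=c/n$ whenever $a_{i,j}^2\ge c/n$ and $0$ otherwise, dumping \emph{all} of the remaining variance into $W^{(2)}$. Three consequences follow immediately. First, after rescaling by $\sqrt{n}$, the nonzero entries of $\sqrt{n}\,W^{(1)}$ have variance exactly $c$, so $r=\sqrt{c}$ is a fixed constant independent of $n$; the technical obstacle you flag (an $n$-dependent $r$ and the need to chase polynomial $r$-dependence through Sections~\ref{sec-net}--\ref{sec-isv}) simply does not arise. Second, the support graph of $W^{(1)}$ is exactly $\Gamma$ by construction, so no perturbation argument for broad connectedness is needed. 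Third, $W^{(2)}$ still satisfies the row/column variance-sum bounds, so Lemma~\ref{l: riemer-schutt} directly gives $\norm{W^{(2)}}\le C'\log^2 n$ outside an event of probability $\exp(-C\log^4 n)$. One then applies Theorem~\ref{th: smallest singular value} (resp.\ \ref{th: intermediate singular}) to $\sqrt{n}\,W^{(1)}+\sqrt{n}\,W^{(2)}$ with $K=C'\log^2 n$, and the polylogarithmic loss enters through the $K^{-C}$ factor already present in those theorems rather than through $r$.

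Your three-scale scheme is plausible and could likely be pushed through, but both of the issues you identify as technical obstacles---the $n$-dependent $r$ and the broad connectedness of $\Gamma_{\rm mod}$---are artifacts of the extra ``moderate'' truncation and disappear under the paper's decomposition. In particular, you do not need to verify polynomial $r$-dependence of the constants in Theorems~\ref{th: smallest singular value}--\ref{th: intermediate singular} (which would require nontrivial bookkeeping through the iterated compressibility argument in Lemma~\ref{l: all compressible}); the paper sidesteps this entirely by making $r$ constant.
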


Note that the condition on the
variance matrix  in Theorem \ref{th: doubly stochastic} does
not exclude the situation where
several of its entries  $a_{i,j}^2$ are of the order $\Om(1)$. Also, $\exp(-C \log^4 n)$ in the probability estimate can be replaced by $\exp(-C \log^p n)$ for any  $p$.
Of course, the constants
$C,C',c$ would then depend on $p$.

\section{Matrix norms and the $\e$-net argument}
\label{sec-net}

We prepare in this section some preliminary estimates that will be
useful in bounding probabilities by $\e$-net arguments. First,
we have the following
bound on
the norm of a random matrix as an operator acting between subspaces of $\R^n$.
This will be useful in the proof of
Theorem \ref{th: intermediate singular}.

\begin{lemma}  \label{l: subspace norm}
 Let $A$ be an $n \times n$ matrix with $\norm{A}_{\infty}
 \le 1$, and let $G$ be an $n \times n$ standard Gaussian matrix.
 Then for any subspaces $E,F \subset \R^n$ and any $s \ge 1$,
 \begin{multline*}
  \P (\norm{P_F(A \odot G): E \to \R^n}
  \ge c s(\sqrt{\dim E} + \sqrt{ \dim F}))
  \\
  \le \exp(-C s^2 (\dim E+ \dim F)),
\end{multline*}
 where $P_F$ is the orthogonal projection onto $F$.
\end{lemma}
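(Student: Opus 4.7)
The plan is a standard $\varepsilon$-net argument applied to the Gaussian process $(x,y)\mapsto \langle y, (A\odot G)x\rangle$ indexed by $S^E\times S^F$. First I would rewrite the operator norm in its variational form: since $P_F$ is the orthogonal projection onto $F$,
\[
\|P_F(A\odot G):E\to\R^n\| \;=\; \sup_{x\in S^E}\sup_{y\in S^n}\pr{y}{P_F(A\odot G)x} \;=\; \sup_{x\in S^E,\ y\in S^F}\pr{y}{(A\odot G)x},
\]
which is the supremum of a centered Gaussian field.

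For fixed unit vectors $x\in E$ and $y\in F$, the random variable $\pr{y}{(A\odot G)x}=\sum_{i,j}a_{i,j}y_i x_j g_{i,j}$ is Gaussian with variance $\sum_{i,j}a_{i,j}^2 y_i^2 x_j^2\le \|A\|_\infty^2\|x\|^2\|y\|^2\le 1$. Hence $\Prob(|\pr{y}{(A\odot G)x}|>t)\le 2\exp(-t^2/2)$.

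Next I would take $(1/4)$-nets $\NN_E\subset S^E$ and $\NN_F\subset S^F$ of cardinalities at most $12^{\dim E}$ and $12^{\dim F}$, respectively (standard volumetric bound). The usual approximation argument gives
\[
\|P_F(A\odot G):E\to \R^n\| \;\le\; 2\max_{x\in\NN_E,\ y\in\NN_F}\pr{y}{(A\odot G)x}.
\]
A union bound combined with the Gaussian tail estimate above yields, for $t=c s(\sqrt{\dim E}+\sqrt{\dim F})$,
\[
\Prob\!\left(\max_{x\in\NN_E,\ y\in\NN_F}\pr{y}{(A\odot G)x}>t/2\right)\le 2\cdot 12^{\dim E+\dim F}\exp\!\left(-\tfrac{t^2}{8}\right).
\]
Using $(\sqrt{\dim E}+\sqrt{\dim F})^2\ge \dim E+\dim F$ and choosing $c$ large enough so that $c^2/8\ge 2\log 12$, the combinatorial factor $12^{\dim E+\dim F}$ is absorbed for every $s\ge 1$, giving the claimed bound with $C=c^2/16$.

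The main (and essentially only) obstacle is the bookkeeping of constants so that the exponential tail from the Gaussian concentration beats the entropy factor from the $\varepsilon$-net uniformly in $s\ge 1$; the hypothesis $\|A\|_\infty\le 1$ is used precisely to keep the per-point Gaussian variance bounded by $1$ regardless of the variance profile, and the projection $P_F$ enters only through the restriction of the supremum over $y$ to the unit sphere of $F$.
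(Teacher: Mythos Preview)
Your proof is correct, and it takes a genuinely different route from the paper's. The paper first treats the case $A_{i,j}\equiv 1$, where the result is classical (rotational invariance reduces $P_F(A\odot G)|_E$ to a $\dim F\times\dim E$ standard Gaussian matrix, and one invokes the standard tail bound for the top singular value). For general $A$ with $\|A\|_\infty\le 1$, the paper then reduces to this case either by a convex-contraction argument \`a la Szarek, or by the explicit Gaussian decomposition $G\stackrel{d}{=}A\odot G_1+A'\odot G_2$ with $A'_{i,j}=\sqrt{1-A_{i,j}^2}$, comparing the norm of $A\odot G_1$ to that of $G$ conditionally on $G_1$.

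Your argument bypasses both the reduction and the external reference: you work directly with the bilinear Gaussian process $(x,y)\mapsto\langle y,(A\odot G)x\rangle$ on $S^E\times S^F$, bound each individual variance by $1$ using $\|A\|_\infty\le1$, and run a single $\varepsilon$-net/union-bound step. This is more elementary and entirely self-contained. The paper's approach, on the other hand, makes transparent that the inhomogeneous profile can only help (the bound for $A\odot G$ is dominated by the bound for $G$), a structural fact that is implicit but not highlighted in your computation.
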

\begin{proof}
 When $a_{i,j}\equiv 1$, the lemma is a direct consequence
 of the rotational invariance of the Gaussian measure,
 and standard concentration estimates for the top singular value of a Wishart
 matrix \cite[Proposition 2.3]{RV2}.
 For general $A$ satisfying the assumptions of the
 lemma,  the claim follows from the contraction argument
 in e.g. \cite[Lemma 2.7]{Sz}, since the collection of
 entries $\{g_{i,j}\}$ so that
  $\norm{A \odot G: E \to F} \le c s(\sqrt{\dim E} + \sqrt{ \dim F}))$
 is a convex symmetric set. We give
 an alternative direct proof:
 let $A'_{i,j}=\sqrt{1-A_{i,j}^2}$, and note that
 $G$ equals in distribution
 $A\odot G_1+A'\odot G_2$ where $G_1,G_2$ are independent copies of $G$.
 On the event
 $$\mathcal{A}_1:=
 \left\{\norm{P_F(A \odot G_1): E \to \R^n} \ge c s(\sqrt{\dim E} +
 \sqrt{ \dim F})\right\},$$
 there exist unit vectors $v_{G_1}\in F,w_{G_1}\in E$ so that
 $|v_{G_1}^T A \odot G_1 w_{G_1}|
  \ge c s(\sqrt{\dim E} + \sqrt{ \dim F})$.
  On the other hand, for any fixed $v,w$,
 $v^T A' \odot G_2 w$ is a Gaussian variable of variance
 bounded by $1$, and hence the event
 $$\mathcal{A}_2(v,w):=
 \left\{|v^T A' \odot G_2 w|
 \ge c s(\sqrt{\dim E} + \sqrt{ \dim F})/2\right\}$$
 has probability bounded above by
$$\exp(- C s^2(\sqrt{\dim E} + \sqrt{ \dim F})^2)
  \le \exp(-C s^2 (\dim E+ \dim F)).$$
The proof is completed by noting that
\begin{eqnarray*}
  \P(\mathcal{A}_1)&\leq&
\E\P(\mathcal{A}_2(v_{G_1},w_{G_1})\mid \mathcal{A}_1))
  \\&&
  \quad
  + \P (\norm{ P_F G: E \to \R^n}
  \ge c s(\sqrt{\dim E} + \sqrt{ \dim F})/2)\,.
\end{eqnarray*}
%
\end{proof}

To prove Theorem \ref{th: doubly stochastic} we will
need an estimate of the norm of the matrix, which is based on a result of Riemer and Sch\"{u}tt \cite{RS}.

\begin{lemma} \label{l: riemer-schutt}
 Let $A$ be an $n \times n$ matrix
 satisfying conditions (1) and (2) in Theorem \ref{th: doubly stochastic}.
 Then
 \[
  \P(\norm{A \odot G} \ge C \log^2 n) \le \exp(-C \log^4 n).
 \]
\end{lemma}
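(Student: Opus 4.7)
The plan is to combine a Riemer--Sch\"utt-type estimate for the expected operator norm of an inhomogeneous Gaussian matrix with Gaussian concentration of measure.

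The first step is to invoke the theorem of Riemer and Sch\"utt \cite{RS}, which, applied to the Gaussian matrix $A\odot G$, yields a bound of the shape
\[
  \E \norm{A \odot G}
  \le C \log n \cdot \left(
     \max_{i} \Bigl( \sum_{j} a_{i,j}^2 \Bigr)^{1/2}
     + \max_{j} \Bigl( \sum_{i} a_{i,j}^2 \Bigr)^{1/2}
   \right).
\]
Assumptions (1)-(2) of Theorem~\ref{th: doubly stochastic} bound each of these two maxima by $\sqrt{C}$, so $\E \norm{A \odot G} \le C' \log n$ (and in fact the precise polylogarithmic exponent here is immaterial for what follows).

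The second step is a standard Gaussian concentration argument. The map $(g_{i,j}) \mapsto \norm{A \odot G}$ is Lipschitz with constant $L = \norm{A}_{\infty}$, since the Hadamard product is a bounded coordinate reweighting of a Gaussian vector. Moreover, every single $a_{i,j}^2$ is dominated by the corresponding row or column sum, so $L \le \sqrt{C}$. The Tsirelson--Ibragimov--Sudakov concentration inequality for Lipschitz functions of standard Gaussians then gives
\[
  \P\left( \norm{A \odot G} > \E \norm{A \odot G} + t \right)
  \le 2\exp\left(-\frac{t^2}{2C}\right)
\]
for every $t > 0$. Taking $t$ of order $\log^2 n$ produces a tail of order $\exp(-c\log^4 n)$, and since the expectation is only logarithmic this deviation term dominates, yielding the claimed estimate.

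I do not expect any serious obstacle. The only delicate point is confirming that the exact polylogarithmic factor produced by the version of Riemer--Sch\"utt one applies is dominated by the $\log^2 n$ that appears in the final bound; any version of that inequality whose right-hand side is polylogarithmic in $n$ under the row/column normalization hypotheses will do, after which the concentration step is routine.
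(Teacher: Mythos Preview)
Your approach is essentially the same as the paper's: bound $\E\norm{A\odot G}$ by a polylogarithmic quantity via Riemer--Sch\"utt, then apply Gaussian concentration with a Lipschitz constant controlled by $\norm{A}_\infty\le\sqrt{C}$. One technical point to watch: the Riemer--Sch\"utt theorem as stated in \cite{RS} bounds $\E\norm{A\odot G}$ by $C(\log n)^{3/2}$ times the \emph{expected maximum} of the random row and column $\ell_2$-norms of $A\odot G$, not the deterministic quantity $\max_i(\sum_j a_{i,j}^2)^{1/2}$ you wrote; the paper bridges this by showing each row norm is subgaussian (uniformly in $i$), so that $\E\max_i\|(X_{i,j})_j\|_2\le c'\sqrt{\log n}$, yielding the overall $\log^2 n$. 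Your final caveat already anticipates this, and once that step is filled in the argument is identical.
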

\begin{proof}
  Write $X=A\odot G$.
 By \cite[Theorem 1.2]{RS},
 \begin{equation}
   \label{RS1}
 \E \norm{A \odot G} \le C (\log^{3/2} n) \E(\max_{i=1,\ldots,n}
 \|(X_{i,j})_{j=1}^n\|_2+
 \|(X_{i,j})_{i=1}^n\|_2).
\end{equation}
 Set $\eta_i=
 \|(X_{i,j})_{j=1}^n\|_2$, $i=1,\ldots,n$ and $\Delta_i=\sum_{j=1}^n a_{i,j}^2
 \leq C$.
 Define $\beta_{i,j}=a_{i,j}^2/\Delta_i\leq 1$.
 For $\theta\leq 1/4C$ one has that
 $$\log \E e^{\theta \eta_i^2}
 =-\frac12\sum_{j=1}^n\log(1-2\beta_{i,j}\theta \Delta_i)
 \leq c\theta\,,$$
 for some constant $c$ depending only on $C$. In particular,
 the independent random variables
 $\eta_i$ possess uniform (in $i,\theta,n$) subgaussian tails, and
 therefore,
 $\E\max_{i=1,\ldots,n} \eta_i\leq c' (\log n)^{1/2}$.
 Arguing similarly for $\E (\max_{i=1,\ldots,n}
 \|(X_{i,j})_{i=1}^n\|_2))$ and substituting in
 \eqref{RS1}, one concludes that
 \[\E \norm{A \odot G} \le C \log^2 n.\]
 The lemma follows from the concentration for the Gaussian measure, since $F: \R^{n^2} \to \R, \ F(B)=\norm{A \odot B}$ is a 1-Lipschitz function, see
 e.g. \cite{Ledoux}.
\end{proof}

Throughout the proofs below we will repeatedly use the easiest form of the $\e$-net argument. For convenience, we  will formulate it as a separate lemma.
\begin{lemma} \label{l: epsilon-net}
 Let $V$ be a $n \times m$  random matrix. Let $\LL \subset S^{m-1}$ be a set contained in an $l$-dimensional subspace of $\R^m$.  Assume that there exists $\e>0$ such that for any $x \in \LL$
 \[
   \P( \norm{Vx}_2 <\e \sqrt{n}) \le p.
 \]
 Denote by $\LL_\a$ the $\a$-neighborhood of $\LL$
 in $\R^m$.
 Then
 \[
  \P(\exists x \in \LL_{\e/(4K)}: \ \norm{Vx}<(\e/2) \cdot \sqrt{n} \text{ and } \norm{V} \le K \sqrt{n})
  \le \left( \frac{6 K}{\e} \right)^l \cdot p.
 \]
\end{lemma}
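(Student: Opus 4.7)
The plan is to carry out a textbook $\e$-net / Lipschitz-approximation argument. The hypothesis gives control of $\|Vx\|_2$ at each \emph{point} of $\LL$; to pass to a whole neighborhood of $\LL$ I will replace an arbitrary such point by a nearby point of a finite net, use the operator-norm bound $\|V\| \le K\sqrt n$ to control the discrepancy, and conclude by a union bound over the net.

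\medskip
\noindent\textbf{Step 1 (Net of $\LL$).}
Because $\LL \subset S^{m-1}$ lies in an $l$-dimensional subspace $E$ of $\R^m$, the set $\LL$ is contained in the unit sphere of $E$. A standard volumetric covering estimate then yields an $\eta$-net $\NN \subset \LL$ with
\[
 |\NN| \le \Bigl(\tfrac{6K}{\e}\Bigr)^l,
\]
provided we take $\eta = \e/(4K)$ (absorbing constants as necessary into the implicit $(3/\eta)^l$ bound).

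\medskip
\noindent\textbf{Step 2 (Union bound over the net).}
Each $y \in \NN$ is a point of $\LL$, so the hypothesis of the lemma applies and
$\P(\|Vy\|_2 < \e\sqrt{n}) \le p$. A union bound gives
\[
 \P\bigl(\exists\, y \in \NN:\ \|Vy\|_2 < \e\sqrt{n}\bigr) \le |\NN|\, p \le \Bigl(\tfrac{6K}{\e}\Bigr)^l p.
\]

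\medskip
\noindent\textbf{Step 3 (Approximation).}
Suppose the event on the left side of the lemma occurs: there is some $x \in \LL_{\e/(4K)}$ with $\|Vx\|_2 < (\e/2)\sqrt n$, while $\|V\| \le K\sqrt n$. By definition of the neighborhood, pick $x' \in \LL$ with $\|x-x'\|_2 \le \e/(4K)$, and then pick $y \in \NN$ with $\|x'-y\|_2 \le \eta = \e/(4K)$. By the triangle inequality $\|x-y\|_2 \le \e/(2K)$, so
\[
 \|Vy\|_2 \le \|Vx\|_2 + \|V\|\cdot\|x-y\|_2 < \tfrac{\e}{2}\sqrt n + K\sqrt n \cdot \tfrac{\e}{2K} = \e\sqrt n.
\]
Thus the event of interest is contained in the net event from Step 2, which has probability at most $(6K/\e)^l p$, as required.

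\medskip
There is no real obstacle here; the only thing to be careful about is matching the choice of net mesh ($\eta = \e/(4K)$) to the size of the neighborhood ($\e/(4K)$) and to the target inequality ($\|Vx\|<(\e/2)\sqrt n$) so that the two approximation errors, each of size $(\e/4)\sqrt n$, together make up the slack between $(\e/2)\sqrt n$ and $\e\sqrt n$ in the hypothesis.
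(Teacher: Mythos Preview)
Your proof is correct and follows essentially the same argument as the paper's own proof: choose an $\e/(4K)$-net $\NN\subset\LL$ of cardinality at most $(6K/\e)^l$, approximate any $x\in\LL_{\e/(4K)}$ by a net point $y$ at distance at most $\e/(2K)$, and use $\|V\|\le K\sqrt n$ together with the triangle inequality to convert $\|Vx\|_2<(\e/2)\sqrt n$ into $\|Vy\|_2<\e\sqrt n$, then apply the union bound. The only cosmetic difference is that you spell out the two-hop approximation $x\to x'\in\LL\to y\in\NN$ explicitly, whereas the paper combines it into a single step.
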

\begin{proof}
 Let $\NN \subset \LL$ be an $(\e/(4K))$-net in $\LL$. By the volumetric estimate, we can choose $\NN$ of cardinality
 \[
  |\NN| \le \left(\frac{6K}{\e} \right)^l.
 \]
 Assume that there exists $y \in \LL_{\e/(4K)}$ such that $\norm{Vy}_2 <(\e/2) \cdot \sqrt{n}$. Choose $x \in \NN$ for which $\norm{y-x}_2<\e/(2K)$. If $\norm {V} \le K \sqrt{n}$, then
 \[
   \norm{Vx}_2 \le (\e/2) \sqrt{n} +\norm{V}\cdot \frac{\e}{2K} \le \e \sqrt{n}.
 \]
 Therefore, by the union bound,
 \begin{multline*}
   \P(\exists y \in \LL_{\e/(4K)}: \ \norm{V y}<(\e/2)\sqrt{n}
   \text{ and } \norm{V} \le K \sqrt{n}) \\
  \le \P(\exists x \in \NN : \ \norm{Vx}\leq
  \e\sqrt{n})
  \le \left( \frac{6 K}{\e} \right)^l \cdot p.
 \end{multline*}
\end{proof}

\section{Compressible vectors}
\label{sec-comp}

As developed in detail in \cite{RV,RV2}, when estimating singular values it
is necessary to handle separately the action of random matrices
on compressible, e.g., close to sparse, vectors. We begin with
a basic small ball estimate.
\begin{lemma} \label{l: sparse vector}
Let $m, n \in \N$.
 Let $A, B$ be  (possibly random)
 $n \times m$ matrices, and let $W=A \odot G+B$, where $G$
is the $n \times m$ Gaussian matrix, independent of $A,B$.
 Assume that, a.s.,
 \begin{enumerate}
   \item  $a_{i,j} \in \{0\} \cup [r, 1]$ for some constant $r>0$ and all $i,j$;
   \item  the graph $\Gamma_A$ satisfies  $\text{deg}(j) \ge \d n$ for all $j \in [m]$.
 \end{enumerate}
 Then
  for any $x \in S^{m-1}$, $z \in \R^n$ and for any $t>0$
 \[
   \P(\norm{Wx-z}_2 \le t \sqrt{n}) \le (Ct)^{ c n}.
 \]
\end{lemma}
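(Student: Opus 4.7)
The plan is to condition on $A$ and $B$ (the a.s.\ hypotheses may then be taken to hold pointwise) and work with the Gaussian randomness of $G$ alone. Writing $Y_i := (Wx - z)_i = \sum_{j=1}^m a_{i,j} x_j g_{i,j} + \mu_i$ with $\mu_i := \sum_j b_{i,j} x_j - z_i$, each $Y_i$ is Gaussian with variance $\sigma_i^2 = \sum_{j=1}^m a_{i,j}^2 x_j^2$, and the $Y_i$ are \emph{independent} across $i$ since distinct rows of $G$ are independent. The strategy then reduces to two steps: (a) show that $\sigma_i^2 \ge c$ for a linear fraction of indices $i$, with $c$ depending only on $r,\delta$; (b) tensorize the resulting one-dimensional small-ball estimates.

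For step (a), the column-degree assumption (2), together with $a_{i,j} \in \{0\}\cup[r,1]$ and $\norm{x}_2 = 1$, yields by Fubini
\[
\sum_{i=1}^n \sigma_i^2 \;=\; \sum_{j=1}^m x_j^2 \sum_{i=1}^n a_{i,j}^2 \;\ge\; r^2 \sum_{j=1}^m x_j^2 \cdot \text{deg}(j) \;\ge\; r^2 \delta n.
\]
Since also $\sigma_i^2 \le \sum_j x_j^2 = 1$, a standard pigeonhole argument gives $|S| \ge (r^2\delta/2)\,n$ for the set $S := \{i : \sigma_i^2 \ge r^2\delta/2\}$.

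For step (b), every $Y_i$ with $i \in S$ has a density bounded uniformly in $\mu_i$ by a constant depending only on $r,\delta$, so $\P(|Y_i| \le s) \le Cs$ for every $s > 0$. On the event $\norm{Wx - z}_2 \le t\sqrt{n}$ the $\ell_2$ budget $\sum_{i\in S} Y_i^2 \le t^2 n$ forces at least $|S|/2$ of the indices $i \in S$ to satisfy $|Y_i| \le t\sqrt{2n/|S|} \le C' t$. Using the independence of $\{Y_i\}_{i\in S}$ and a union bound over the $\binom{|S|}{\lceil |S|/2\rceil} \le 2^{|S|}$ choices of such half-subset,
\[
\P(\norm{Wx - z}_2 \le t\sqrt{n}) \;\le\; 2^{|S|}\,(CC' t)^{|S|/2} \;\le\; (C'' t)^{cn},
\]
and integrating out $A,B$ completes the argument. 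I do not expect any serious obstacle; the proof uses only condition (2), which is consistent with the paper's remark that the full broad-connectedness property (3) enters only at larger scales in the later lemmas.
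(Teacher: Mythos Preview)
Your proof is correct and follows essentially the same approach as the paper: condition on $A,B$, use the degree hypothesis and $\|x\|_2=1$ to show that the set $S=\{i:\sigma_i^2\ge r^2\delta/2\}$ has size at least $(r^2\delta/2)n$, and then exploit the independence of the Gaussian coordinates $Y_i$, $i\in S$. The only difference is in the final small-ball step: the paper bounds $\P\big(\sum_{i\in S}(Y_i-z_i)^2\le \tau^2|S|\big)$ directly by the Lebesgue volume of the ball times the maximal Gaussian density, obtaining exponent $|S|$, whereas your Markov/union-bound tensorization yields exponent $|S|/2$; both give $(Ct)^{cn}$ and the loss is immaterial for the statement.
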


\begin{proof}
 Let $x \in S^{m-1}$. Set $I=\{i \in [n] \mid \sum_{j=1}^m a_{i,j}^2 x_j^2 \ge  r^2 \d/2\}$.
 Let $\Gamma=\Gamma_{A^T}$ be the graph of the matrix $A^T$. The inequality
 \[
  \sum_{i=1}^n \sum_{j=1}^m  a_{i,j}^2 x_j^2
  \ge \sum_{j=1}^m r^2 \text{deg}_{\Gamma}(j) x_j^2
  \ge  r^2 \d n \sum_{j=1}^m x_j^2= r^2 \d n
 \]
 implies
 \[
   \sum_{i \in I} \left( \sum_{j=1}^m   a_{i,j}^2 x_j^2 \right)
   \ge \sum_{i=1}^n \sum_{j=1}^m  a_{i,j}^2 x_j^2 -r^2 \d n/2 \ge r^2 \d n/2.
 \]
On the other hand, we have the reverse inequality
 \[
   \sum_{i \in I} \left( \sum_{j=1}^m   a_{i,j}^2 x_j^2 \right)
    \le    |I| \left( \sum_{j=1}^m    x_j^2 \right)= |I|,
 \]
 and so $|I| \ge r^2\d n/2$.

 For any $i \in I$ the
independent
normal random variables $w_i=\sum_{j=1}^m (a_{i,j} g_{i,j}+b_{i,j}) x_j$
have variances at least $r^2 \d/2$.
Estimating the Gaussian measure of a ball by its
Lebesgue measure, we get that for any $\t>0$
 \begin{multline*}
   \P \left(\norm{Wx-z}_2^2 \le \t^2 (r^2 \d/2)^2 \cdot  n\right) \\
   \le  \P \left( \sum_{i \in I} (w_i-z_i)^2  \le  \t^2 (r^2 \d/2) \cdot |I| \right)
   \le (C \t)^{|I|}.
 \end{multline*}
 Setting $t= \t r^2 \d /2$ finishes the proof.
\end{proof}

We now introduce the notion of compressible and incompressible vectors.
The compressible vectors will be easier to handle by an $\e$-net argument,
keeping
track of the
degree of compressibility. This is the content of the next three
lemmas in this section.

 For $u, v<1$ denote
 \[
  \text{Sparse}(u) = \{ x \in S^{m-1} \mid |\supp(x)| \le u m\}.
 \]
 and
  \begin{align*}
    \text{Comp}(u,v) &= \{ x \in S^{m-1} \mid \exists y \in \text{Sparse}(u), \ \norm{x-y}_2 \le v \}, \\
     \text{Incomp}(u,v) &= S^{m-1} \setminus  \text{Comp}(u,v).
  \end{align*}
 We employ the following strategy. In Lemma \ref{l: very sparse}, we show that the matrix $W$ is well invertible on the set of highly compressible vectors. Lemma \ref{l: intermediate compressible} asserts that if the matrix is well invertible on the set of vectors with a certain degree of compressibility, then we can relax the compressibility assumption and show invertibility on a larger set of compressible vectors. Finally, in Lemma \ref{l: all compressible}, we  prove that the matrix $W$ is well invertible on the set of all compressible vectors. This is done by using Lemma \ref{l: very sparse} for highly compressible vectors, and extending the set of vectors using Lemma \ref{l: intermediate compressible} in finitely many steps. The number of these steps will be independent of the dimension.
\begin{lemma} \label{l: very sparse}
Let $m, n \in \N, \ m \le (3/2)n$.
 Let $A, B, W$ be  $n \times m$ matrices satisfying
the conditions of Lemma \ref{l: sparse vector}. Let $K \ge 1$.
Then there exist constants $c_0,c_1,c_2$ such that,
 for any  $z \in \R^n$,
  \begin{multline*}
    \P \Big( \exists x \in \text{\rm Comp}\left(c_0, c_1/K^2 \right):\\
    \ \norm{Wx-z}_2 \le  (c_1/K) \sqrt{n}
    \text{ and } \norm{W} \le K \sqrt{n} \Big)
    \le e^{- c_2 n}.
  \end{multline*}
\end{lemma}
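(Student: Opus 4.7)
The plan is a standard discretization argument: use Lemma~\ref{l: sparse vector} to get a strong pointwise small-ball estimate, then upgrade it to a uniform bound on the sparse vectors via an $\e$-net together with the union bound over the choice of the support, and finally extend from sparse to compressible vectors by the triangle inequality, exploiting the operator-norm bound $\norm{W}\le K\sqrt n$.

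First I would fix a support $J\subset[m]$ with $|J|\le c_0 m$ and consider the unit sphere $S^J$. Lemma~\ref{l: sparse vector} gives that for any fixed $x\in S^J$ and any $t>0$,
\[
 \P(\norm{Wx-z}_2\le t\sqrt n)\le (Ct)^{cn}.
\]
Take $t=c_1/K$ with $c_1$ to be chosen, set $\eta=c_1/(CK^2)$, and pick an $\eta$-net $\NN_J\subset S^J$ of cardinality $|\NN_J|\le (3/\eta)^{|J|}=(C'K^2/c_1)^{c_0 m}$. A union bound over a fixed $J$ then gives
\[
 \P\bigl(\exists x\in\NN_J:\ \norm{Wx-z}_2\le (c_1/K)\sqrt n\bigr)
 \le (C'K^2/c_1)^{c_0 m}\,(Cc_1/K)^{cn}.
\]
Summing over the $\binom{m}{\lfloor c_0 m\rfloor}\le(e/c_0)^{c_0 m}$ choices of $J$ and using $m\le(3/2)n$, the overall bound is
\[
 (e/c_0)^{(3/2)c_0 n}\,(C'K^2/c_1)^{(3/2)c_0 n}\,(Cc_1/K)^{cn}
 \le K^{3c_0 n-cn}\cdot\bigl(\text{factor independent of }K\bigr)^n.
\]

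Next I would extend from the net to $\text{Comp}(c_0,c_1/K^2)$. If $y\in\text{Comp}(c_0,c_1/K^2)$, there is a sparse $x_0\in\text{Sparse}(c_0)$ with $\norm{y-x_0}_2\le c_1/K^2$, and a net point $x\in\NN_{\supp(x_0)}$ with $\norm{x-x_0}_2\le \eta$. On the event $\norm{W}\le K\sqrt n$,
\[
 \norm{Wx-z}_2\le \norm{Wy-z}_2+K\sqrt n\bigl(\tfrac{c_1}{K^2}+\eta\bigr)
 \le (c_1/K)\sqrt n+2c_1/K\cdot\sqrt n,
\]
so the bad event on $\text{Comp}(c_0,c_1/K^2)$ forces $\norm{Wx-z}_2\le 3(c_1/K)\sqrt n$ for some net point. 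Applying the previous step with $c_1$ replaced by $3c_1$ and absorbing constants, the bound becomes $K^{3c_0 n-cn}\,(C''c_1)^{cn}\,(e/c_0)^{(3/2)c_0 n}\,(C')^{(3/2)c_0 n}$.

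Finally I would choose the constants. Pick $c_0>0$ so small that $3c_0<c$ and the combinatorial factor $(e/c_0)^{(3/2)c_0}(C')^{(3/2)c_0}$ is at most $e^{c/4}$; then pick $c_1>0$ so small that $C''c_1<e^{-1}$. Since $K\ge 1$, the $K$-factor $K^{3c_0 n-cn}\le 1$, and the whole expression is bounded by $e^{-c_2 n}$ for some $c_2=c_2(\d,r)>0$, as required. There is no serious obstacle; the only thing to keep track of is that the power of $K^2$ introduced by the net on each support (of exponent $c_0 m\le (3/2)c_0 n$) is absorbed by the $K^{-cn}$ factor coming from Lemma~\ref{l: sparse vector}, which is why the compressibility radius must scale like $1/K^2$ and the small-ball radius like $1/K$.
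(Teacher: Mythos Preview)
Your proposal is correct and follows essentially the same approach as the paper: pointwise small-ball estimate from Lemma~\ref{l: sparse vector}, an $\e$-net on each coordinate sphere $S^J$, and a union bound over supports, with the constants $c_0,c_1$ chosen so that the $K^{-cn}$ from the small-ball bound absorbs the $K^{O(c_0 n)}$ from the net. The paper's write-up simply packages the net step via Lemma~\ref{l: epsilon-net} and takes $c_0=c/3$ directly (with $c$ the exponent from Lemma~\ref{l: sparse vector}), but the argument is the same.
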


\begin{proof}
  Let $c$ be the constant from Lemma \ref{l: sparse vector}.
Without
loss of generality, we may and will assume
that $c<1$.
    Let $ t>0$ be a number to be chosen later.
      For any set $J \subset [m]$ of cardinality
$|J| =l=\lfloor cm/3 \rfloor$ Lemmas \ref{l: sparse vector}
and \ref{l: epsilon-net} imply
\begin{eqnarray*}
  &&\P(\exists x \in (S^{J})_{t/(4K)}: \
  \norm{Wx}_2<(t/2) \sqrt{n}  \text{ and } \norm{W} \le K \sqrt{n}) \\
  && \quad \quad \le\left( \frac{6 K}{t} \right)^l \cdot (Ct)^{cn}.
\end{eqnarray*}
(Recall that $S^J$ is
the unit sphere of
the coordinate subspace of $\R^m$ corresponding to $J$.)
Since $\text{Comp}(c/3, t/(4K)) \subset \bigcup_{|J|=l} (S^{J})_{t/(4K)}$, the
union bound
yields
  \begin{multline*}
  \P(\exists x \in \text{Comp}(c/3, t/(4K)):\ \norm{Wx}<(t/2) \sqrt{n} \text{ and } \norm{W} \le K \sqrt{n}) \\
  \le  \binom{m}{l} \cdot  \left( \frac{6 K}{t} \right)^l \cdot (Ct)^{cn}
  \le  \left( \frac{C K }{ t } \right)^{c m /3} \cdot (Ct)^{ c n},
 \end{multline*}
 which does
 not
 exceed $e^{-c n/3}$ provided
 that $t = c''/K$ for an appropriately chosen $c''>0$.
  This proves the lemma if we set $c_0=c/3, \ c_1=c''/4$.
\end{proof}

\begin{lemma} \label{l: intermediate compressible}
Let $m, n \in \N, \  n \le 2m$.
 Let $A, B$ be
 (possibly random) $n \times m$ matrices, and set $W= A \odot G+B$,
where $G$ is the standard $n \times m$ Gaussian matrix, independent
of $A,B$. Assume that, a.s.,
 \begin{enumerate}
   \item  $a_{i,j} \in \{0\} \cup [r, 1]$ for some constant $r>0$ and all $i,j$;
   \item  the graph $\Gamma_{A^T}$ is broadly connected.
 \end{enumerate}
  Then for any $c_0$ and any
$u,v>0$, such that $u \ge c_0$ and $(1+\k/2)u<1$, and for any $z \in \R^n$
  \begin{align*}
    &\P \left( \exists x \in \text{\rm Comp} ((1+\k/2)u, (v/K)^{C+1})
\setminus \text{\rm Comp}(u,v)
      \right.:
      \\
      &\quad \left.  \norm{Wx-z}_2 \le cv (v/K)^{C} \sqrt{n}\text{ and } \norm{W} \le K \sqrt{n} \right) \\
    &\le e^{-c  n}.
  \end{align*}
where $c=c(c_0,\kappa,\delta,r)$.
\end{lemma}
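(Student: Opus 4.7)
The plan is to adapt the strategy of Lemma~\ref{l: very sparse}: combine a single-vector small-ball estimate with an $\e$-net argument, using the broad connectedness of $\Gamma_{A^T}$ to amplify the usable randomness. For every candidate $x$ in the difference set, fix a unit vector $y$ supported on some $J \subset [m]$, $|J| \le \lfloor (1+\k/2) u m \rfloor$, with $\norm{x-y}_2 \le (v/K)^{C+1}$. The key observation is that the two compressibility conditions together force $y$ to be \emph{spread} inside $J$: since $x \notin \text{Comp}(u,v)$, the closest $um$-sparse unit vector to $x$ is $x|_{J_0}/\norm{x|_{J_0}}_2$, and an elementary sphere computation gives $\norm{x|_{J_0^c}}_2 \ge v/\sqrt{2}$ for every $J_0 \subset [m]$ with $|J_0| \le um$; combining with $\supp(y) \subset J$ and the triangle inequality then yields
\[
  \norm{y|_{J \setminus J_0}}_2 \ge v/\sqrt{2} - (v/K)^{C+1} \ge v/4,
\]
provided $C$ is large enough that $(v/K)^{C+1}$ is at most a small fraction of $v$.

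Next I would invoke property~(3) of Definition~\ref{def-scon} applied to $J$: broad connectedness of $\G_{A^T}$ furnishes $I = I(J) \subset [n]$ of size $|I| \ge \min((1+\k)|J|, n)$ in which every $i$ has at least $(\d/2)|J|$ neighbors inside $J$ in the graph of $A$. Writing $N_A(i) = \{j : a_{i,j} \ne 0\}$, we have $|J \setminus N_A(i)| \le (1-\d/2)|J|$; since $\k < \d/2$, a short calculation gives $(1-\d/2)(1+\k/2) \le 1$, so $|J \setminus N_A(i)| \le um$, and the spreadness bound applied with $J_0 = J \setminus N_A(i)$ produces $\norm{y|_{J \cap N_A(i)}}_2 \ge v/4$. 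Consequently, for each $i \in I$ the Gaussian coordinate $(Wy)_i$ has variance
\[
  \s_i^2 = \sum_{j} a_{i,j}^2 y_j^2 \ge r^2 \norm{y|_{J \cap N_A(i)}}_2^2 \ge \frac{r^2 v^2}{16}.
\]

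With the uniform lower bound $\s_i \ge rv/4$ on a set of size $|I| \ge c_1 n$ (from $u \ge c_0$, $n \le 2m$ and broad connectedness), the standard chi-square tail bound applied to $\sum_{i \in I} ((Wy)_i - z_i)^2$ gives, for any fixed such $y$,
\[
  \P \! \left( \norm{Wy-z}_2 \le \rho \right) \le \left( \frac{16 e \rho^2}{r^2 v^2 |I|} \right)^{|I|/2}.
\]
Taking $\rho$ of order $v(v/K)^C \sqrt n$ reduces this to $(C_2(v/K)^C)^{|I|}$ with $C_2 = C_2(c_0,\k,\d,r)$. I would then run an $\e$-net argument in the spirit of Lemma~\ref{l: epsilon-net}: enumerate supports $J$, contributing $\binom{m}{|J|} \le C_3^{|J|}$ choices since $|J|/m$ is bounded below; for each $J$ take a $\theta$-net of $S^J$ of cardinality $\le (6/\theta)^{|J|}$ with $\theta$ comparable to $(v/K)^{C+1}$. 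On the event $\norm{W} \le K\sqrt n$ the total approximation error $\norm{W(x - y_{\rm net})}_2 \le K\sqrt n (\theta + (v/K)^{C+1})$ is bounded by $2v(v/K)^C \sqrt n$, of the same order as the target radius, so the union bound evaluates to
\[
  C_4^{|J|+|I|} \cdot (K/v)^{(C+1)|J| - C|I|}.
\]

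The main obstacle is the dimension count in the final exponent. Broad connectedness yields $|I|/|J| \ge 1+\k$, so choosing $C > 1/\k$ (say $C = 2/\k$) makes $(C+1)|J| - C|I| \le -|J|$ and produces a beneficial factor $(v/K)^{|J|}$. Provided $v/K$ lies below a threshold depending only on $c_0, \k, \d, r$---precisely the regime enforced by the iterative use of this lemma in the proof of Lemma~\ref{l: all compressible}, where $v$ contracts sharply at each step---the resulting expression is at most $\exp(-c n)$. The essential use of the hypothesis $\k < \d/2$ in Definition~\ref{def-scon} is the verification of $(1-\d/2)(1+\k/2) \le 1$, which drives the spreadness transfer to $N_A(i) \cap J$ and thus the uniform variance lower bound above.
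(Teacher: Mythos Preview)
Your overall strategy---single-vector small-ball bound plus an $\e$-net over supports, with broad connectedness supplying $\ge(1+\k)|J|$ rows of variance $\gtrsim v^2$---is the same as the paper's. The route to the variance bound differs: the paper takes $J=\{j:|x_j|\ge v/\sqrt m\}$, the set of \emph{large coordinates} of the sparse vector $x$ itself, and for $i\in I(J)$ reads off $\sigma_i^2\ge (v^2/m)\sum_{j\in J}a_{i,j}^2\ge r^2(\d/2)|J|\,v^2/m\ge r^2\d u v^2/2$ directly. Your spreadness detour through $|J\setminus N_A(i)|\le um$ reaches the same order but is more delicate (it already needs $(v/K)^{C+1}\le v/4$ for the triangle-inequality step).

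The final paragraph contains a real gap. Your bound $C_4^{|J|+|I|}(v/K)^{|J|}$ is $\le e^{-cn}$ only when $v/K$ lies below a parameter-dependent threshold; the lemma is stated for all $v>0$. The obstruction is structural: passing from $x$ to the sparse approximant $y$ costs $\|W(x-y)\|_2\le K\sqrt n\,(v/K)^{C+1}=v(v/K)^C\sqrt n$, which forces the constant in the event threshold $cv(v/K)^C\sqrt n$ to be bounded \emph{below}, so you cannot shrink the small-ball radius to absorb $C_4$. The paper sidesteps this by netting the sparse set $S(u,v)=\text{Sparse}((1+\k/2)u)\setminus\text{Comp}(u,v)$ directly and carrying a free parameter $t$: the threshold $\e=tvru\sqrt\d$ and the mesh $\e/(4K)$ scale together, the union bound becomes $[(C'K/v)^{4/\k}t]^{\k um/2}$, and the choice $t=e^{-1}(v/(C'K))^{4/\k}$ yields $e^{-\k um/2}$ for every $v$. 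The compressibility radius $(v/K)^{C+1}$ then \emph{emerges} as $\e/(4K)$ after optimization rather than being imposed up front. Your restriction is harmless for the iteration in Lemma~\ref{l: all compressible}, but as written the argument does not establish the lemma as stated.
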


\begin{proof}
 Let $S(u,v)=\text{Sparse}((1+\k/2)u) \setminus \text{Comp}(u,v)$.
 Fix
 any $x \in \R^n$ and
 denote by $J$ the set of all coordinates $j \in [m]$
 such that $|x_j| \ge v/ \sqrt{m}$.
 For any $x \in S(u,v) \ |J| \ge um$, since otherwise $x \in \text{Comp}(u,v)$.
 Since the graph $\Gamma_{A^T}$ is broadly connected, this implies that $|I(J)| \ge (1+\k) u m$.

 If $i \in I(J)$, then $w_i=\sum_{j=1}^m a_{i,j} g_{i,j} x_j$ is a centered normal random variable with variance
 \[
   \s_i^2=\sum_{j=1}^m a_{i,j}^2 x_j^2
   \ge \frac{v^2}{ m} \cdot \sum_{j \in J} a_{i,j}^2
   \ge \frac{v^2}{m} \cdot r^2 (\d/2) |J|
   \ge v^2 r^2 u \d/2.
 \]
 Hence,  for any $t>0$,
 \begin{align*}
  &\P \left( \norm{Wx-z}_2 \le t  v r u \cdot \sqrt{\d  n} \right)
  \le \P \left( \norm{Wx-z}_2 \le t v r u \cdot \sqrt{\d  m/2} \right)
  \\
  &\le \P \left( \sum_{i \in I(J)} (w_i-z_i)^2
                \le t^2 v^2 r^2 u (\d/2) \cdot |I(J)| \right)
  \le (c t)^{|I(J)|}
  \le (c t)^{(1+\k)u m},
 \end{align*}
where the third inequality is obtained by the same reasoning as
at the end of the proof of Lemma \ref{l: sparse vector}.
 Let $\D \subset [m]$ be any set of cardinality $l=\lfloor (1+\k/2)u m \rfloor$, and denote $\Phi^\D=S^\D \cap S(u,v)$.
 Set $\e=tv r u \cdot \sqrt{\d }$.
  By Lemma \ref{l: epsilon-net},
 \begin{multline*}
    \P \left( \exists x \in (\Phi^\D)_{\e/(4K)}:
    \norm{Wx-z}_2 \le
    t \frac{v r u\sqrt{\d  n} }{2}\text{ and } \norm{W} \le K \sqrt{n} \right) \\
    \le (c t)^{(1+\k)u m} \cdot \left(\frac{6 K}{\e} \right)^l
 \end{multline*}
 We have
 \[
    \text{Comp} \left( \left(1+\frac{\k}{2} \right)u,  \frac{\e}{4K} \right) \setminus \text{Comp}(u,v) \subset \bigcup_{|\D|=l} (\Phi^\D)_{\e/(4K)}.
 \]
 Therefore, the union bound yields
 \begin{multline*}
    \P \left( \exists x \in  \text{Comp} \left( \left(1+\frac{\k}{2} \right)u,
    \frac{tv r u \cdot \sqrt{\d }}{4K} \right) \setminus \text{Comp}(u,v)
    \right.: \\
        \left.
	\norm{Wx-z}_2 \le  t \frac{v r u\sqrt{\d n} }{2}\text{ and } \norm{W} \le K \sqrt{n} \right) \\
    \le \binom{m}{l} \cdot(c t)^{(1+\k)u m} \cdot \left(\frac{6 K}{\e} \right)^l
    \le (c t)^{(1+\k)u m} \cdot \left(\frac{C K}{u^2 \cdot tv
    r \cdot \sqrt{\d }} \right)^{(1+\k/2)u m} \\
      \le \left[ \left(\frac{C' K}{ v } \right)^{4/\k} t \right]^{\k u m/2}.
 \end{multline*}
 This does  not exceed $e^{-\k u m/2}$ if we choose
 \[
  t= e^{-1} \cdot \left(\frac{C' K}{ v } \right)^{-4/\k}.
 \]
 Substituting this $t$ into the estimate above proves the lemma.
\end{proof}

\begin{lemma} \label{l: all compressible}
Let $m, n \in \N, \  (2/3)m \le n \le 2m$.
 Let $A, B$ be an $n \times m$ matrices, and set
$W= A \odot G+B$, where
$G$ is the standard $n \times m$ Gaussian matrix, independent of
$A,B$. Assume that
 \begin{enumerate}
   \item  $a_{i,j} \in \{0\} \cup [r, 1]$ for some constant $r>0$ and all $i,j$;
   \item  the graph $\Gamma_{A^T}$ is broadly connected.
 \end{enumerate}
  Then  for all $z \in \R^n$
   \begin{multline*}
    \P \Big( \exists x \in \text{\rm Comp} (1-\k/2, K^{-C}):\\
\quad\quad
   \norm{Wx-z}_2 \le K^{-C} \sqrt{n} \text{ and } \norm{W}\le K \sqrt{n}  \Big)
    \le e^{- c n}.
  \end{multline*}
\end{lemma}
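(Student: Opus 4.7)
The strategy is the one sketched in the paragraph preceding Lemma~\ref{l: very sparse}: start from invertibility on very sparse vectors, furnished by Lemma~\ref{l: very sparse}, then iterate Lemma~\ref{l: intermediate compressible} to enlarge the compressibility parameter $u$ by the factor $1+\k/2$ at each stage until it passes $1-\k/2$. This will take a constant number $N=N(c_0,\k)$ of steps, in particular a number independent of $n$ and $K$. The size restriction $(2/3)m\le n\le 2m$ in the statement is exactly what is needed for both auxiliary lemmas to apply.

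Concretely, I set $u_0=c_0$, $v_0=c_1/K^2$ (the parameters produced by Lemma~\ref{l: very sparse}), and define recursively $u_{k+1}=(1+\k/2)u_k$ and $v_{k+1}=(v_k/K)^{C+1}$. Lemma~\ref{l: intermediate compressible} is then applied with parameters $(u,v)=(u_k,v_k)$ for $k=0,1,\ldots,N-1$, where $N$ is the first index with $u_N\ge 1-\k/2$. At each such $k$ one has $u_k\ge c_0$, and since $1-\k/2<1/(1+\k/2)$ the condition $(1+\k/2)u_k<1$ also holds, so the hypotheses of that lemma are met. The $k$th application produces an exceptional event of probability at most $e^{-cn}$ on which some $x\in\text{\rm Comp}(u_{k+1},v_{k+1})\setminus\text{\rm Comp}(u_k,v_k)$ satisfies $\norm{Wx-z}_2\le c v_k(v_k/K)^C\sqrt n$ while $\norm W\le K\sqrt n$. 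Off the union of these $N+1$ events (including the base event from Lemma~\ref{l: very sparse}), taking the minimum of the thresholds along the chain yields a uniform lower bound on $\norm{Wx-z}_2$ over $\text{\rm Comp}(u_N,v_N)$.

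Unfolding the recursion $v_{k+1}=(v_k/K)^{C+1}$ shows that, because $N$ is constant, both $v_N$ and the minimal invertibility threshold along the chain are of the form $K^{-C'}$ (respectively $K^{-C'}\sqrt n$) for one constant $C'$ depending only on $r,\d,\k$. Taking the constant $C$ in the conclusion equal to $C'$ and noting that $\text{\rm Comp}(1-\k/2,K^{-C})\subset\text{\rm Comp}(u_N,v_N)$, which holds because $u_N\ge 1-\k/2$ and $v_N\ge K^{-C}$, delivers the desired estimate; the failure probability $(N+1)e^{-\min(c,c_2)n}$ from the union bound is absorbed into $e^{-cn}$ after adjusting the implicit constant.

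The only real issue is bookkeeping: one must verify that the tower of $(C+1)$-th powers produced by the $v$-recursion does not generate a $K$-exponent that grows with $n$ or $K$. This is where the geometric growth of $u_k$ is essential: because $N$ is determined by $c_0$ and $\k$ alone, the tower of exponents is bounded by a fixed function of $r,\d,\k$, keeping $C'$ finite and preventing the iterated thresholds from degenerating.
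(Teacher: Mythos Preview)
Your proof is correct and follows essentially the same approach as the paper: set $u_0=c_0$, $v_0=c_1K^{-2}$, iterate $u_{k+1}=(1+\k/2)u_k$ and $v_{k+1}=(v_k/K)^{C+1}$ until $u_N\ge 1-\k/2$, decompose $\text{Comp}(1-\k/2,v_N)$ into $\text{Comp}(u_0,v_0)$ and the successive annuli, and invoke Lemmas~\ref{l: very sparse} and~\ref{l: intermediate compressible} together with a union bound. Your verification that $(1+\k/2)u_k<1$ for $k<N$ and your remark that $N$ depends only on $c_0,\k$ (so the iterated exponent $C'$ stays bounded) match the paper's argument exactly.
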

\begin{proof}
 Set $u_0= c_0, \ v_0= c_1 K^{-2}$, where $c_0,c_1$ are the constants from Lemma \ref{l: very sparse}.
  Let $L$ be the smallest natural number such that
  \[
    u_0 (1+\k/2)^L>1-\k/2.
  \]
  Note that $u_0 (1+\k/2)^L \le (1-\k/2)\cdot (1+\k/2)<1$.
  Define by induction $v_{l+1}=(v_l/K)^{C+1}$, where $C$ is the  constant from Lemma \ref{l: intermediate compressible}.
  Then $v_L=K^{-C'}$ for some $C'>0$ depending only on the parameters $\d, \k$ and $r$.
  We have
 \begin{multline*}
  \text{Comp} (1-\k/2, v_L)
  \subset
  \text{Comp} (u_0, v_0) \ \cup
  \\
  \bigcup_{l=1}^{L} \text{Comp} (u_0(1+ \k/2)^l, v_l) \setminus
  \text{Comp} (u_0(1+ \k/2)^{l-1}, v_{l-1}).
 \end{multline*}
 The result now follows from Lemmas \ref{l: very sparse} and \ref{l: intermediate compressible}.
\end{proof}

\section{Smallest singular value}
\label{sec-ssv}

To estimate the smallest singular value, we need the following result
from \cite[Lemma 3.5]{RV}, that handles incompressible vectors.
\begin{lemma}  \label{l: via distance}
  Let $W$ be an $n \times n$ random matrix.
  Let $W_1,\ldots,W_n$ denote the column vectors of $W$, and let $H_k$ denote
  the span of all column vectors except the $k$-th.
  Then for every $a,b \in (0,1)$ and every $t> 0$, one has
  \begin{equation}                              \label{eq: via distance}
    \P \big( \inf_{x \in \text{\rm Incomp}(a,b)} \|W x\|_2 < t b n^{-1/2} \big)
    \le \frac{1}{a n} \sum_{k=1}^n
          \P \big( \dist( W_k, H_k) < t \big).
  \end{equation}
\end{lemma}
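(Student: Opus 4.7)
The plan is to use the spread-set strategy of \cite{RV}. The key geometric identity is that for every column index $k$, decomposing $Wx = \sum_j x_j W_j$ and projecting onto the line orthogonal to $H_k$ yields
$$\norm{Wx}_2 \ge |x_k| \cdot \dist(W_k, H_k),$$
since $W_j \in H_k$ for all $j \ne k$ by definition of $H_k$. This inequality will be the only way the matrix $W$ enters the argument.

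The main preliminary ingredient is a \emph{spread lemma} for incompressible vectors: for $x \in \text{\rm Incomp}(a,b)$ the set $\sigma(x) := \{k \in [n] \mid |x_k| \ge b/\sqrt{n}\}$ satisfies $|\sigma(x)| \ge a n$ (up to harmless absolute-constant adjustments in $a$ and $b$, which I suppress). I would prove this by contradiction: if $|\sigma(x)| < a n$, set $T = \sigma(x)$ and $y = x \cdot \mathbf{1}_T$; then $|\supp y| < a n$ and $\norm{x-y}_2^2 = \sum_{k \notin T} x_k^2 < n \cdot b^2/n = b^2$. A short normalization step (enlarging the support by at most one coordinate to reach unit norm) produces a unit sparse vector within distance $\le b$ of $x$, contradicting $x \in \text{\rm Incomp}(a,b)$.

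With these two ingredients the proof becomes immediate. On the event $\mathcal{E} := \{\inf_{x \in \text{\rm Incomp}(a,b)} \norm{Wx}_2 < tb/\sqrt{n}\}$, pick any realizing $x$. For every $k \in \sigma(x)$, combining the two bullet points above with $|x_k| \ge b/\sqrt{n}$ gives $\dist(W_k, H_k) < t$. Hence on $\mathcal{E}$ the random count $N := \#\{k \in [n] \mid \dist(W_k, H_k) < t\}$ satisfies $N \ge |\sigma(x)| \ge a n$, and Markov's inequality yields
$$\P(\mathcal{E}) \le \P(N \ge a n) \le \frac{\E N}{a n} = \frac{1}{a n}\sum_{k=1}^n \P(\dist(W_k, H_k) < t),$$
as required. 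The main obstacle is the bookkeeping in the spread lemma, in particular the normalization step that turns the truncated sub-unit vector $y$ into a genuine unit sparse vector without inflating its distance to $x$ beyond $b$; everything else is essentially automatic.
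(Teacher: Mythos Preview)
The paper does not give its own proof; it quotes the lemma from \cite[Lemma~3.5]{RV}, and your outline is precisely the argument found there: the pointwise bound $\|Wx\|_2 \ge |x_k|\,\dist(W_k,H_k)$, a spread lemma guaranteeing at least $an$ coordinates of size $\ge b/\sqrt n$ for every incompressible $x$, and then Markov's inequality applied to the count $N=\#\{k:\dist(W_k,H_k)<t\}$.

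The only wrinkle is the one you already flag. With this paper's convention that $\text{Sparse}(a)\subset S^{m-1}$, the normalization step does \emph{not} in general keep the distance below $b$: if $y=x\mathbf 1_{\sigma(x)}$ then $\|x-y/\|y\|_2\|_2^2=2(1-\|y\|_2)$, and one only knows $\|y\|_2>\sqrt{1-b^2}$, which gives $\|x-y/\|y\|_2\|_2<\sqrt{2(1-\sqrt{1-b^2})}$, strictly larger than $b$. Your ``enlarge the support by one coordinate'' suggestion does not repair this, and in fact one can write down incompressible $x$ (in the paper's sense) with $|\sigma(x)|\le an$, so the spread lemma with the exact constants $a,b$ is simply false under that convention. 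In \cite{RV} the set $\text{Sparse}(a)$ is not restricted to the sphere; then $y=x\mathbf 1_{\sigma(x)}$ itself witnesses compressibility whenever $|\sigma(x)|\le an$, no normalization is needed, and the stated constants in \eqref{eq: via distance} come out exactly. With that reading of the definition your proof is complete and identical to the cited source.
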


Now we can derive the first main result.

\begin{proof}[Proof of Theorem \ref{th: smallest singular value}]
Set $B= \E W$ and let $A=(a_{i,j})$, where $a_{i,j}^2= \text{Var}( w_{i,j})$, so
\[
  W=A \odot G +B,
\]
where $G$ is the $n \times n$ standard Gaussian matrix.

 Without loss of generality, assume that $K>K_0$, where $K_0>1$ is a constant to be determined.
  Applying Lemma \ref{l: very sparse} to the matrix $W$, we obtain
  \begin{multline*}
    \P \Big( \exists x \in \text{\rm Comp}
\left(c_0, c_1 K^{-2} \right):\\
 \ \norm{Wx}_2 \le (4c_1/K) \sqrt{n}
    \text{ and } \norm{W} \le K \sqrt{n} \Big)
    \le e^{- c n}.
  \end{multline*}
  Therefore, for any $t>0$
  \begin{multline*}
   \P(s_n(W) \le c t K^{-C} n^{-1/2}) \le e^{-cn} + \P (\norm{W} \ge K \sqrt{n}) \\
   +     \P ( \exists x \in \text{\rm Incomp}\
\left(c_0, c_1 K^{-2} \right): \ \norm{Wx}_2 \le (4c_1/K) \sqrt{n}).
  \end{multline*}
    By Lemma \ref{l: subspace norm},
  \[
    \P (\norm{W}> 2K \sqrt{n}) \le \P (\norm{A \odot G}> K \sqrt{n})
    \le e^{-cn},
  \]
  provided that $K>K_0$ with $K_0$ taken large enough, thus determining
$K_0$.
  By Lemma \ref{l: via distance}, it is enough to bound
  $ \P \big( \dist( W_k, H_k) < c t \big)$ for all $k \in [n]$.
Consider, for example, $k=1$. In the discussion that follows,
let $h \in S^{n-1}$
be a vector such that $h^T W_j=0$ for all $j=2 \etc n$. Then
  \[
    \dist( W_1, H_1) \ge |h^T W_1|.
  \]
  Let $\tilde{A}$ be the $(n-1) \times n$ matrix whose rows are the columns of $A^T$, except the first one, i.e. $\tilde{A}^T=(A_2, A_3 \etc A_n)$. Define the $(n-1) \times n$ matrices $\tilde{B}, \tilde{W}$ in the same way.
  The condition on $h$ can now be rephrased as $\tilde{W}h=0$.

Since the graph
$\Gamma_{A}$ is broadly connected,
the graph $\Gamma_{\tilde{A}^T}$
is broadly connected with slightly smaller parameters and in particular
with
parameters $\d/2$ and $\k/2$.
Since
    $ \text{Comp}(1-\k/2, (2K)^{-C})\subset
     \text{Comp}(1-\k/4, (2K)^{-C})$, we get from
  Lemma \ref{l: all compressible} applied to
$\tilde{W}$, $z=0$, and with $K$ replaced by $2K$, that
  \begin{multline*}
    \P\Big(\exists h \in \text{\rm Comp}(1-\k/2, (2K)^{-C}), \ \tilde{W}h=0\Big)\\
    \le \P\Big(
\exists h \in \text{\rm Comp}(1-\k/2, (2K)^{-C}),
\norm{\tilde{W}h}_2 \le (2K)^{-C'} \sqrt{n}\\
 \text{ and } \norm{\tilde{W}} \le 2K \sqrt{n}\Big)
     + \P (\norm{\tilde{W}}> 2K \sqrt{n})\\
     \le e^{-cn}+
      \P (\norm{\tilde{W}}> 2K \sqrt{n})\, .
  \end{multline*}
  The last term is exponentially small:
  \[
    \P (\norm{\tilde{W}}> 2K \sqrt{n}) \le  \P (\norm{W}> 2K \sqrt{n})
    \le e^{-cn}
  \]
 Hence,
  \[
    \P\Big( \exists
 h \in \text{Comp}(1-\k/2, (2K)^{-C}): \ \tilde{W}h=0\Big)
\le e^{-c'n}.
  \]
  Note that the vector $h$ is independent of $W_1$. Therefore,
  \begin{align*}
   &\P \big( \dist( W_1, H_1) <  t (2K)^{-C}  \big) \\
   &\le \P(|h^T W_1| \le  t (2K)^{-C}, \ \tilde{W}h=0, \text{ and } h \in \text{Comp}(1-\k/2, (2K)^{-C})
   \\
  &\quad  + \P(|h^T W_1| \le  t (2K)^{-C}, \ \tilde{W}h=0, \text{ and } h \notin \text{Comp}(1-\k/2, (2K)^{-C})
  \\
  &\le e^{-c'n}
    +\E \P_{W_1}(|h^T W_1| \le  t (2K)^{-C}
\mid  h \notin \text{Comp}(1-\k/2, (2K)^{-C})
  \\
  &\le e^{-c'n}
    + \sup_{u \in \text{Incomp}(1-\k/2, (2K)^{-C})} \P(|u^T W_1| \le c t K^{-C})
  \end{align*}
  Assume that $u \in \text{Incomp}(1-\k/2, (2K)^{-C})$. Let $J= \{j \in [n]
: |u_j| \ge (2K)^{-C} n^{-1/2} \}$. Then $|J| \ge (1-\k/2)n$.
  Hence, if $J'=\{j \in [n] :
|a_{1j}| \geq r n^{-1/2}$, then $|J \cap J'|\ge (\d-\k/2)n>(\d/2)n$.
  Therefore,
   $u^T W_1$ is a centered normal random variable with variance
$\s^2 \ge r^2 (2K)^{-2C}\cdot \d/2$, and so
  \[
    \P(|u^T W_1| \le  t (2K)^{-C}) \le C' t.
  \]
  This means that
  \[
       \P \big( \dist( W_1, H_1) <  t (2K)^{-C} \big)
       \le  t+ e^{-c n},
  \]
  and the same estimate holds for $\dist( W_j, H_j), \ j>1$, so the theorem follows from Lemma \ref{l: via distance}.

\end{proof}

\section{Intermediate singular value}
\label{sec-isv}
 The next elementary lemma allows one to find a set of rows of a fixed matrix with big $\ell_2$ norms, provided that the graph of the matrix has a large minimal degree.
\begin{lemma}  \label{l: dense subset}
 Let $k<n$, and let $A$ be an $n \times n$ matrix.
  Assume that
 \begin{enumerate}
   \item  $a_{i,j} \in \{0\} \cup [r, 1]$ for some constant $r>0$ and all $i,j$;
   \item  the graph $\Gamma_A$ satisfies  $\text{deg}(j) \ge \d n$ for all $j \in [n]$.
 \end{enumerate}
  Then for any $J \subset [n]$ there exists a set $I \subset [n]$ of cardinality
 \[
  |I| \ge (r^2 \d/2) n,
 \]
 such that for any $i \in I$
 \[
  \sum_{j \in J} a_{i,j}^2 \ge (r^2 \d/2) \cdot |J|.
 \]
\end{lemma}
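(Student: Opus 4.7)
The statement is a routine averaging (double-counting) lemma, and I would prove it via a standard Markov-type argument on the bilinear sum $S_J := \sum_i \sum_{j\in J} a_{i,j}^2$.

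First, I would lower-bound $S_J$ using only the degree assumption and the fact that nonzero entries of $A$ are $\geq r$ in absolute value. Interpreting the hypothesis $\text{deg}(j)\geq \d n$ as a bound on the degree of each column (which is the interpretation consistent with Lemma \ref{l: sparse vector}), every column $j$ has at least $\d n$ nonzero entries $a_{i,j}$, each of which satisfies $a_{i,j}^2 \geq r^2$. Hence
\[
  \sum_{i=1}^n a_{i,j}^2 \;\geq\; r^2 \d n \qquad \text{for every } j\in[n],
\]
and summing this over $j\in J$ gives $S_J \geq r^2 \d n \, |J|$.

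Next, I would define the set
\[
  I \;:=\; \Bigl\{\, i\in[n] \;:\; \sum_{j\in J} a_{i,j}^2 \,\geq\, (r^2\d/2)\,|J| \,\Bigr\}
\]
and upper-bound $S_J$ by splitting the outer sum over $I$ and its complement. For $i\in I$ we use the crude bound $\sum_{j\in J} a_{i,j}^2 \leq |J|$ (since $a_{i,j}^2\leq 1$), while for $i\notin I$ the defining inequality of $I$ gives $\sum_{j\in J} a_{i,j}^2 < (r^2\d/2)|J|$. Therefore
\[
  r^2 \d n\,|J| \;\leq\; S_J \;\leq\; |I|\cdot|J| \;+\; (n-|I|)\cdot (r^2\d/2)\,|J|,
\]
and dividing by $|J|$ and rearranging yields $|I| \geq (r^2\d/2)\,n$, which is the desired conclusion.

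There is no real obstacle here: the result is purely deterministic and the only ``trick'' is choosing the threshold $(r^2\d/2)|J|$ so that the loss on the complementary set is exactly half of the total mass bound. Once that threshold is fixed, the double-counting inequality immediately forces $|I|$ to be at least $(r^2\d/2)n$.
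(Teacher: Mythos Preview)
Your proof is correct and is essentially the same averaging/double-counting argument as the paper's: both define $I$ by the threshold $(r^2\d/2)|J|$, lower-bound the total mass $\sum_i\sum_{j\in J}a_{i,j}^2\ge r^2\d n|J|$ from the degree condition, and then use $\sum_{j\in J}a_{i,j}^2\le |J|$ on $I$ together with the threshold bound on $I^c$ to conclude $|I|\ge (r^2\d/2)n$. The only difference is cosmetic bookkeeping in how the two contributions are combined.
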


\begin{proof}
 By the assumption on $A$,
 \[
  \sum_{i=1}^n \sum_{j \in J} a_{i,j}^2 \ge r^2 \d n \cdot |J|.
 \]
 Let $I=\{i \in [n] \mid \sum_{j \in J} a_{i,j}^2 \ge r^2 \d  \cdot |J|/2 \}$. Then
 \begin{align*}
  |J| \cdot |I|
  \ge \sum_{i \in I} \sum_{j \in J} a_{i,j}^2
  \ge r^2 \d n \cdot |J| - \sum_{i \in I^c} \sum_{j \in J} a_{i,j}^2
  \ge \frac{r^2 \d |J|}{2} \cdot n.
 \end{align*}
\end{proof}

 We also need the following lemma concerning the Gaussian measure in $\R^n$.
 \begin{lemma}  \label{l: projection measure}
  Let $E,F$ be
linear
subspaces of $\R^n$. Let $P_E, P_F$ be the orthogonal projections
onto $E$ and $F$, and assume that
for some $\tau>0$,
  \[
   \forall y \in F, \ \norm{P_E y}_2 \ge \t \norm{y}_2.
  \]
  Let $g_E$ be the standard Gaussian vector in $E$. Then for any $t>0$
  \[
   \P( \norm{P_F g_E}_2 \le t)
   \le \left( \frac{ct}{\t \sqrt{\dim F}} \right)^{\dim F}.
  \]
 \end{lemma}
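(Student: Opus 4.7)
The plan is to realize $P_F g_E$ as a centered Gaussian vector in $F$ whose covariance is bounded below by $\t^2 I_F$, and then apply a standard small-ball bound for chi-squared random variables.

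Set $d=\dim F$ and consider the linear map $T=P_F|_E \colon E \to F$. A direct calculation shows that its adjoint is $T^*=P_E|_F \colon F \to E$: for $x\in E$ and $y\in F$,
\[
\langle Tx,y\rangle=\langle P_F x,y\rangle=\langle x,y\rangle=\langle x,P_E y\rangle.
\]
The hypothesis $\norm{P_E y}_2 \ge \t \norm{y}_2$ for $y\in F$ thus says that $T^*$ is bounded below by $\t$, so in particular $T^*$ is injective (which forces $d\le \dim E$) and $T$ has exactly $d$ singular values, each at least $\t$. Equivalently, $TT^*\succeq \t^2 I_F$.

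Since $g_E$ is the standard Gaussian on $E$, the vector $P_F g_E=Tg_E$ is a centered Gaussian in $F$ with covariance $TT^*$. Diagonalizing $TT^*$ in an orthonormal eigenbasis $(e_i)_{i=1}^d$ of $F$ with eigenvalues $\s_i^2\ge\t^2$ gives
\[
\norm{P_F g_E}_2^2=\sum_{i=1}^d \s_i^2 Z_i^2,
\]
where the $Z_i$ are i.i.d.\ $N(0,1)$. This distribution stochastically dominates $\t^2\chi_d^2$, so $\P(\norm{P_F g_E}_2\le t)\le \P(\chi_d^2\le t^2/\t^2)$.

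The final step is a textbook small-ball bound for the chi-squared density. Dropping $e^{-y/2}\le 1$ in the density and using the Stirling bound $\Gamma(d/2+1)\ge (d/2)^{d/2}e^{-d/2}$,
\[
\P(\chi_d^2\le s)\le \frac{1}{2^{d/2}\Gamma(d/2)}\int_0^s y^{d/2-1}\,dy = \frac{s^{d/2}}{2^{d/2}\Gamma(d/2+1)}\le \left(\frac{e\,s}{d}\right)^{d/2}.
\]
Substituting $s=t^2/\t^2$ and taking square roots yields the stated bound with $c=\sqrt{e}$. No step is especially delicate; the only substantive observation is that the lower bound on $P_E|_F$ passes via duality to the desired lower bound on the eigenvalues of the covariance of $P_F g_E$.
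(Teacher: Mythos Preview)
Your argument is correct. The key observation that the adjoint of $P_F|_E$ is $P_E|_F$, so that the hypothesis transfers directly to a lower bound $TT^*\succeq \tau^2 I_F$ on the covariance of $P_F g_E$, is clean and makes the chi-squared reduction immediate.

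The paper takes a slightly different route: it introduces $E_1=P_E F\subset E$, integrates out the component of $g_E$ orthogonal to $E_1$ (an application of Anderson's inequality), and then uses an isometry swapping $E_1$ and $F$ to rewrite $\P(\norm{P_F g_{E_1}}_2\le t)$ as $\P(\norm{P_{E_1} g_F}_2\le t)$; since $P_{E_1}y=P_E y$ for $y\in F$, the hypothesis gives $\norm{P_{E_1} g_F}_2\ge\tau\norm{g_F}_2$, reducing to the density estimate for $g_F$. Your approach avoids both the Anderson step and the isometry construction, replacing them with the single linear-algebraic duality $(P_F|_E)^*=P_E|_F$; it is more self-contained and arguably more transparent, at the modest cost of invoking the explicit chi-squared tail computation rather than the generic Gaussian small-ball estimate. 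Both arrive at the same bound with the same dependence on the parameters.
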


 \begin{proof}
  Let $E_1= P_E F$. Then (because $\tau>0$),
the linear operator $P_E: F \to E_1$ has a trivial kernel and hence
is a bijection.
  Denote by $g_H$ the standard Gaussian vector in the space $H \subset \R^n$.
  Let $U: \R^n \to \R^n$ be an isometry such that
  $U E_1=F$ and $UF=E_1$. Then  $P_F=U P_{E_1} U$ and $Ug_{E_1}$ has the same distribution as
$
  g_F$.
  Therefore, integrating over the coordinates of $g_E$ orthogonal to $E_1$, we get
  \begin{multline*}
   \P( \norm{P_F g_E}_2 \le t) \le    \P(  \norm{U P_{E_1} U g_{E_1}}_2 \le t) \\
   =    \P(\norm{P_{E_1} g_F}_2 \le t)
   \le \P(\norm{g_F}_2 \le t/\t).
  \end{multline*}
  The lemma follows from the standard density estimate for the Gaussian vector.

 \end{proof}

 Let $J \subset [m]$.
 For levels $Q>q > 0$
define the set of totally spread vectors
\begin{equation}                        \label{SJ}
  \mathcal{S}^J_{q,Q}:= \Big\{ y \in
S^J: \;
    \frac{q}{\sqrt{|J|}} \le |y_k| \le \frac{Q}{\sqrt{|J|}}
    \quad \text{for all $k \in J$} \Big\}.
\end{equation}

\begin{lemma}                     \label{l: via dist multidim}
  Let $\delta,\rho \in (0,1)$.
  There exist $Q>q > 0$ and $\a,\b>0$, which depend polynomially on $\d,\rho$,
   such that the following holds.
   Let $d \le m \le n$ and let $W$ be an $n \times m$ random matrix with independent columns.
   For $I \subset [m]$ denote by $H_I$ the linear subspace spanned by the columns $W_i, \ i \in I$.
   Let $J$ be a uniformly chosen random subset of $[n]$ of cardinality $d$.
  Then for every $\e > 0$
  \begin{multline}                                  \label{eq via dist}
    \P \Big( \inf_{x \in \text{\rm Incomp}(\delta,\rho)} \|Wx\|_2 < \a \e \sqrt{\frac{d}{n}} \Big) \\
    \le  \b^d  \cdot \E_J  \P \big( \inf_{z \in \mathcal{S}_{q,Q}^J} \dist(W z, H_{J^c}) < \e \big).
  \end{multline}
\end{lemma}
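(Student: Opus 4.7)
The plan is to execute a multidimensional version of the Rudelson--Vershynin incompressibility-to-distance reduction, using a random index subset to pass from a single distance to the $d$-dimensional distance of a totally spread vector. The central deterministic input is the ``spread coordinates'' fact: for some positive constants $c_1,c_2,C_2$ depending polynomially on $\d,\rho$, every $x \in \text{\rm Incomp}(\d,\rho)$ admits a set $\s(x) \subset [m]$ with $|\s(x)|\ge c_1 m$ on which $c_2/\sqrt{m} \le |x_k| \le C_2/\sqrt{m}$. This follows by a short pigeonhole from the definition of incompressibility and is standard in the Rudelson--Vershynin literature.

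Fix $W$ and $x \in \text{\rm Incomp}(\d,\rho)$, and pick any $J \subset \s(x)$ with $|J|=d$; set $z := x|_J/\|x|_J\|_2$. The coordinate bounds on $\s(x)$ force $z \in \mathcal{S}^J_{q,Q}$ for suitable $q,Q$ polynomial in $\d,\rho$, while $\|x|_J\|_2 \ge c_2\sqrt{d/m}$. Writing $Wx = W(x|_J) + W(x|_{J^c})$ and noting $W(x|_{J^c}) \in H_{J^c}$ gives
\[
  \|Wx\|_2 \ge \dist(Wx, H_{J^c}) = \|x|_J\|_2 \cdot \dist(Wz, H_{J^c}) \ge c_2\sqrt{d/n}\cdot \dist(Wz, H_{J^c}).
\]
Taking $\a := c_2$, on the event $\Om_\e := \{\inf_{x \in \text{\rm Incomp}(\d,\rho)} \|Wx\|_2 < \a\e\sqrt{d/n}\}$ we may select (after $\e$-net discretization of the sphere to ensure measurability) a witness $x^*(W) \in \text{\rm Incomp}(\d,\rho)$ with $\|W x^*(W)\|_2 < \a\e\sqrt{d/n}$; then for every $J \subset \s(x^*(W))$ of size $d$ some $z \in \mathcal{S}^J_{q,Q}$ satisfies $\dist(Wz, H_{J^c}) < \e$.

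The final step is averaging over a uniform random $d$-subset $J$ of $[m]$, independent of $W$. Since $|\s(x^*(W))| \ge c_1 m$, the hypergeometric ratio $\binom{|\s(x^*(W))|}{d}/\binom{m}{d}$ admits a pointwise lower bound of the form $\b^{-d}$ with $\b = \b(c_1)$ polynomial in $1/\d, 1/\rho$. Thus pointwise on $\Om_\e$,
\[
  \mathbf{1}_{\Om_\e}\cdot \b^{-d} \le \P_J\bigl(\inf_{z \in \mathcal{S}^J_{q,Q}} \dist(Wz, H_{J^c}) < \e\bigr);
\]
taking expectation in $W$ and exchanging with $\E_J$ via Fubini yields the claimed inequality. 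The main obstacle is the bookkeeping of polynomial constants: the spread lemma, the levels $q,Q$ defining $\mathcal{S}^J_{q,Q}$, and the counting estimate all feed into $\a$ and $\b$, and one must verify that $\b$ remains in the base (rather than scaling with $d$ in the exponent) and that $\a$ does not collapse — in particular, the constants $c_1, c_2, C_2$ must depend polynomially, not merely continuously, on $\d,\rho$. Measurability of $x^*(W)$ is only a minor technicality.
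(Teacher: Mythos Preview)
Your proposal is correct and is precisely the argument of \cite[Lemma~6.2]{RV2}; the paper does not give its own proof but simply cites that lemma, remarking that the i.i.d.\ assumption there is never used and the argument goes through verbatim for matrices with independent columns. Your outline---the spread-coordinates lemma, the deterministic distance reduction $\|Wx\|_2 \ge \|x|_J\|_2\cdot\dist(Wz,H_{J^c})$, and the hypergeometric averaging over a uniform $d$-subset---matches that proof step for step, and the bookkeeping concerns you flag (polynomial dependence of $c_1,c_2,C_2$ on $\d,\rho$, and the lower bound $\binom{\lfloor c_1 m\rfloor}{d}/\binom{m}{d}\ge \b^{-d}$) are handled exactly as you indicate.
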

 \begin{remark}
 Lemma \ref{l: via dist multidim} was proved in \cite{RV2} for
 random matrices with i.i.d. entries (see Lemma 6.2 there).
 However, that
 proof can be extended to the general case without any changes.
 \end{remark}

\begin{proof}[Proof of Theorem \ref{th: intermediate singular}]
Set $B= \E W$ and let $A=(a_{i,j})$, where $a_{i,j}^2= \text{Var}( w_{i,j})$, so
\[
  W=A \odot G +B,
\]
where $G$ is the $n \times n$ standard Gaussian matrix.
 Without loss of generality assume that
\begin{equation}\label{eq: kappa small}
    \k \le \frac{r^2 \d}{2}.
\end{equation}
If this inequality doesn't hold, we can redefine $\k$ as the right hand side of this inequality, and note that the broad connectedness property is retained when $\k$ gets smaller.

  Let $C>0$ be as in Lemma \ref{l: all compressible}.
  Decomposing the sphere into compressible and incompressible vectors,
  we write
  \begin{eqnarray}
    \label{eq-o1}
    \nonumber
    &&\P \left(s_m(W) \le c t K^{-C} \cdot \frac{n-m}{\sqrt{n}} \right) \\
    &\le& \P \left(\inf_{x \in \text{Comp}(c_0, c_1 K^{-2})} \norm{Wx}_2 \le c t K^{-C} \cdot \sqrt{n} \right) \\
    &&+ \P \left(\inf_{x \in \text{Incomp}(c_0, c_1 K^{-2})} \norm{Wx}_2 \le c t K^{-C} \cdot \frac{n-m}{\sqrt{n}} \right).
    \nonumber
  \end{eqnarray}
  By Lemma \ref{l: very sparse}, the first term in the right side of
  \eqref{eq-o1} does not exceed
  \[
   e^{-c_2 n}+ \P(\norm{W} \ge 2 K \sqrt{n}).
  \]
  By Lemma
\ref{l: subspace norm},
the last term in the last expression
is smaller than $e^{-cn}$, if $K$ is large enough.

  To estimate the second term in the right side of
  \eqref{eq-o1} we use Lemma \ref{l: via dist multidim}.
  Recall that by that
 lemma, we can assume that $q=K^{-C'}$ and $Q=K^{C'}$ for some constant $C'$. Then
the
lemma reduces the problem to estimating
  \[
    \P \big( \inf_{z \in \mathcal{S}_{q,Q}^J} \dist(W z, H_{J^c}) < \e \big)
  \]
  for these $q,Q$ and for
  a fixed subset $J \subset [m]$ of cardinality
  \[
    d= \left \lfloor \frac{n-m}{2} \right \rfloor,
  \]
  and with a properly chosen $\e$, see \eqref{eq: def epsilon} below.

    Since we do not control the norm of the submatrix matrix $B$ corresponding to $J$, we will reduce the dimension further to eliminate this matrix.
  Set $H_0= B \R^J \subset \R^n$, and let $F= (H_{J^c} \cup H_0)^{\perp}$.
  Then $F$ is a linear subspace of $\R^n$ independent of
  $\{W_j, \ j \in J\}$, and
  \begin{equation}
    \label{newstar}
    n-m\leq \dim F\leq n-m+d\leq 2(n-m)\,.
  \end{equation}
    Since $P_F  B \R^J=\{0\}$, we get
  \begin{align}\label{eq: dist F}
       & \P \big( \exists z \in \mathcal{S}_{q,Q}^J :\  \dist(W z, H_{J^c}) < \e \big)
    \le \P \big( \exists z \in \mathcal{S}_{q,Q}^J :\  \norm{P_F W z}_2 < \e \big) \\
    &=  \P \big( \exists z \in \mathcal{S}_{q,Q}^J: \   \norm{P_F (A \odot G) z}_2 < \e \big)  \notag
  \end{align}
  for any $\e>0$.

  We start with bounding the small ball probability for a fixed vector
$z \in \mathcal{S}_{q,Q}^J$. The $i$-th coordinate of the vector $(A \odot G) z$ is a normal random variable with variance
  \[
   \s_i^2=\sum_{j \in J}a_{i,j}^2 x_j^2 \ge \frac{q^2}{d}\sum_{j \in J}a_{i,j}^2.
  \]
  Let $I \subset [n]$ be the set constructed in Lemma \ref{l: dense subset}. Then for any $i \in I$ we have $\s_i \ge c q= c'K^{-C'}$.
  Let $E$ be the subspace of $\R^n$ spanned by the vectors $e_i, \ i \in I$.

  Since $P_E (A \odot G)z$ and $P_{E^\perp} (A \odot G)z$ are independent Gaussian vectors,
  \begin{align}  \label{eq: projection-norm}
    &\P \big(  \norm{P_F (A \odot G) z}_2 < \e \big)  \\
   & = \E_{ P_{E^\perp} (A \odot G)} \P \big(  \norm{P_F P_E (A \odot G) z + P_F P_{E^\perp} (A \odot G) z}_2 < \e \mid  P_{E^\perp} (A \odot G) \big) \notag \\
    &\le \P \big(  \norm{P_F P_E (A \odot G) z}_2 < \e \big)
    \le \P \big(   \norm{P_F g_E}_2 < c K^{C'}\e \big). \notag
  \end{align}
  Here $g_E$ is the standard Gaussian vector in $E$.
  The first
inequality in \eqref{eq: projection-norm}
is a consequence of Anderson's
inequality \cite[Theorem 1]{A55}, applied to the convex symmetric
function $f(x)={\bf 1}_{\|x\|_2<\e}$ and the Gaussian
random vector $P_FP_E(A\odot G)x$.
The last inequality in \eqref{eq: projection-norm}
follows since $P_E (A \odot G) z$ is a
 vector with independent normal coordinates with variances greater than
  $c' K^{-C'}$.

  Now we have to check that the spaces $E$ and $F$
satisfy the conditions of Lemma \ref{l: projection measure},
with high probability.
 Let $\tilde{A}, \tilde{B}, \tilde{G}$, and $\tilde{W}$ be $(m-d)\times n$ matrices whose rows coincide with the columns of the matrices $A,B,G$, and $W$ corresponding to the set $J^c$. Then the condition $F \perp \text{span}(W_j, \ j \in J^c)$ can be rewritten as $F \subset \text{Ker}(\tilde{W})$.
  By Lemma \ref{l: all compressible} and \eqref{eq: kappa small},
   \begin{multline*}
    \P (F \cap S^{n-1} \not \subset \text{Incomp} (1- r^2 \d/4, K^{-C})) \\
   \le \P \left( \exists x \in \text{Comp} (1-\k/2, K^{-C}):
   \tilde{W}x =0 \right)
    \le e^{- c_3 n}.
  \end{multline*}
  Assume that $F \cap S^{n-1} \subset \text{Incomp} (1- r^2 \d/4, K^{-C})$.
  Since $\dim E =|I| \ge (r^2 \d/4) n$, the
incompressibility means that for any $y \in F \cap S^{n-1},
  \ \norm{P_E y}_2 \ge \t= K^{-C}$.
  Hence, by \eqref{eq: projection-norm} and Lemma \ref{l: projection measure},
  for $z\in \mathcal{S}^J_{q,Q}$,
  \begin{multline*}
    \P \big(   \norm{P_F (A \odot G) z}_2 < \e \text{ and } F \cap S^{n-1} \subset \text{Incomp} (1- r^2 \d/4, K^{-C}) \big)
    \\
    \le \left( \frac{c' K^{C'} \e}{\t \sqrt{n-m}} \right)^{n-m}
    \le  \left( \frac{c' K^{C''} \e}{ \sqrt{n-m}} \right)^{n-m}.
  \end{multline*}

  By Lemma \ref{l: subspace norm} and \eqref{newstar},
  \[
     \P ( \norm{P_F (A \odot G) : \R^J \to \R^n} \ge C_0 t^{-1/2}
     \sqrt{n-m}) \le e^{-  t^{-1} (n-m)}.
  \]
  Let
  \[
  \eta= \frac{\e \sqrt{t}}{2 C_0  \sqrt{n-m}}.
  \]
  By the volumetric estimate we can find an $\eta$-net $\NN^J\subset
  \mathcal{S}^J_{q,Q}$ of cardinality
  \[
   |\NN^J| \le  \left(\frac{3}{\eta} \right)^{d}.
  \]
  For $\eta$ chosen above we have
  \begin{multline*}
    \P \big( \exists z \in \NN^J :
    \   \norm{P_F (A \odot G) z}_2 < \e \text{ and } F \cap S^{n-1} \subset \text{Incomp} (1- r^2 \d/4, K^{-C}) \big)
    \\
    \le   \left(\frac{3}{\eta} \right)^{d} \cdot \left( \frac{c' K^{C''} \e }{ \sqrt{n-m}} \right)^{n-m}
    \le \left( \frac{c''K^{2C''} \e}{ \sqrt{t}\sqrt{n-m}} \right)^{(n-m)/2}.
  \end{multline*}
  This does not exceed $t^{(n-m)/4}$, if we set
  \begin{equation}   \label{eq: def epsilon}
   \e=c K^{-C} \sqrt{n-m} \cdot t.
  \end{equation}
  Assume now that
  \begin{itemize}
    \item $ \forall z \in \NN^J \   \norm{P_F (A \odot G) z}_2 \ge \e$;
    \item $F \cap S^{n-1} \subset \text{Incomp} (1- r^2 \d/4, K^{-C})$;
    \item $ \norm{P_F (A \odot G) : \R^J \to \R^n} \le C_0 t^{-1/2} \sqrt{n-m}$.
  \end{itemize}
  The previous proof shows that these conditions are satisfied with probability at least
  \[
     1-
     t^{(n-m)/4}
     - e^{- c_3 n}
     -e^{-  t^{-1} (n-m)}
     \ge      1-
     2t^{(n-m)/4}
     - e^{- c_3 n}
  \]
  Let $z' \in \mathcal{S}_{q,Q}^J$
  and let $z \in \NN^J$ be an $\eta$-approximation of $z'$:
  $\norm{z'-z}_2< \eta$. Then, on the event above,
  \begin{multline*}
    \norm{P_F (A \odot G) z'}_2
    \ge \norm{P_F (A \odot G) z}_2- \norm{P_F (A \odot G) : \R^J \to \R^n} \cdot \eta \\
    \ge \e- C_0 t^{-1/2} \sqrt{n-m} \cdot \eta
    \ge \e/2.
  \end{multline*}
  We thus have proved that
  \begin{multline*}
    \P \big( \exists z \in \mathcal{S}^J_{q,Q}
   :  \   \norm{P_F (A \odot G) z}_2 < c K^{-C}  \sqrt{n-m} \cdot t  \big) \\
    \le  2t^{(n-m)/4}
     + e^{- c_3 n}.
  \end{multline*}
  Combining this with \eqref{eq via dist},  \eqref{eq: dist F}, and
  \eqref{eq: def epsilon} we obtain
  \begin{multline*}
    \P \left(\inf_{x \in \text{Incomp}(c_0, c_1 K^{-2})} \norm{Wx}_2
    \le \a \cdot c t K^{-C} \cdot \frac{n-m}{\sqrt{n}} \right) \\
    \le \b^d \cdot \max_{J \subset [n], \ |J|=d}
    \P \big( \exists z \in \mathcal{S}_{q,Q}^J :\
    \dist(W z, H_{J^c}) < \a \cdot c t K^{-C} \cdot \sqrt{n-m} \big)
    \\
    \le \b^d \cdot \max_{J \subset [n], \ |J|=d}
    \P \big( \exists z \in \mathcal{S}_{q,Q}^J \
    \norm{P_F (A \odot G) z}_2 < \a \cdot c t K^{-C} \cdot \sqrt{n-m} \big)
    \\
    \le \b^d \cdot \left( 2t^{(n-m)/4}
     + e^{- c_3 n}   \right).
  \end{multline*}
  Recall that  $d=\lfloor (n-m) /2 \rfloor$, $\a=K^{-c}$, and $\b=K^c$. Replacing $t$ by $\b^2 t$ in the inequality above to eliminate the coefficient $\b^d$ in the right hand side, we complete the proof of the theorem.

\end{proof}

\section{Applications}
\label{sec-app}
 \subsection{Singular value bounds}
The bound on the smallest singular value in Theorem \ref{th: graph matrix} follows immediately from Theorem \ref{th: smallest singular value}  and Lemma \ref{l: subspace norm}. To bound $s_m(A \odot G)$ we apply Theorem \ref{th: intermediate singular} to the matrix $W$ consisting of the first $m$ columns of $A \odot G$ and note that $s_m(W) \le s_m(A \odot G)$.

To prove Theorem \ref{th: doubly stochastic},
decompose the matrix $W$ by writing
$W=W^{(1)}+W^{(2)}$ where $W^{(1)}$ and $W^{(2)}$
are independent centered Gaussian matrices with independent entries and
\[
   \mbox{\rm Var}(W^{(1)}_{i,j})=
   \begin{cases}
   c/n, &\text{if } c/n \le a_{i,j} \\
   0, & \text{otherwise,}
   \end{cases}\,,
   \qquad
   \mbox{\rm Var}(W^{(2)}_{i,j})= a_{i,j}- \mbox{\rm Var}(W^{(1)}_{i,j}).
\]
Let $\Om$ be the event $\norm{W^{(2)}} \le C' \log^2 n$.
By Lemma \ref{l: riemer-schutt}, for  appropriate constants $C',C''$,
one has
 \[
 \P(\Om^c)
  \le \P(\norm{W} \ge C' \log^2 n) \le \exp(-C'' \log^4 n).
 \]
On the other hand,
  \begin{multline*}
    \P (s_n(W) \le c t  n^{-1} \log^{-c'}n \text{ and } \Om)  \\
    \le E_{W^{(2)}} \P (s_n(W^{(1)}+W^{(2)}) \le c t  n^{-1} \log^{-C'}n
    \mid  \Om) \\
     \le \sup_{X: \|X\|\leq C'\log^2 n} \P (s_n(W^{(1)}+X) \le c t  n^{-1} \log^{-c'}n).
  \end{multline*}
  By Theorem \ref{th: smallest singular value} applied to
  $\sqrt{n}W^{(1)}$ and $B=\sqrt{n}X$ with $K=C' \log^2 n$, the last probability is at most $t+e^{-cn}$.
  The second estimate in Theorem \ref{th: doubly stochastic} is proved by the same argument.

\subsection{Permanent estimates}

We turn next to the proof of the theorems in the introduction.
We begin
with a refinement of Theorem \ref{th: graph permanent-intro}.
\begin{theorem} \label{th: graph permanent}
 There exist $\tilde{C}, c, c'$ depending only on $\d, \k$ such that for any $\t \ge 1$
and any adjacency matrix $A$ of a $(\d,\k)$-broadly connected graph
 \begin{eqnarray}
  \label{eq-160113}
 && \P\left( \left|
   \log{\mbox{\rm det}^2(A_{1/2}\odot G)}
   -\E\log{\mbox{\rm det}^2(A_{1/2}\odot G)}
 \right| >\tilde C(\t n \log n)^{1/3} \right)  \\
  &\leq& 6\exp(-\t)+3 \exp \left(-c \t^{1/3} n^{1/3} \log^{-2/3}n  \right) +  9 \exp \left(-c' n  \right).
\nonumber
\end{eqnarray}
and
\begin{equation}
  \label{eq-160113a}
  \E\log{\mbox{\rm det}^2(A_{1/2}\odot G)}
  \le \log \text{\rm per}(A)
   \le \E\log{\mbox{\rm det}^2(A_{1/2}\odot G)} + C' \sqrt{n \log n}.
 \end{equation}
\end{theorem}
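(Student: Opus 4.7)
The proof has two components: the concentration inequality \eqref{eq-160113} and the expectation sandwich \eqref{eq-160113a}. Both rest on the singular value bounds of Theorems~\ref{th: smallest singular value} and~\ref{th: intermediate singular} combined with Gaussian concentration of measure.

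For the concentration, write $M = A_{1/2} \odot G$ with singular values $s_1 \ge \cdots \ge s_n$, and decompose $\log\mbox{\rm det}^2(M) = \sum_{i=1}^n \log s_i^2(M)$. Since the logarithm is singular near $0$, pick a truncation threshold $\epsilon>0$, set $\phi_\epsilon(x) = \max(\log x, \log \epsilon)$, and define
\[
  F_\epsilon(G) = \sum_{i=1}^n \phi_\epsilon(s_i^2(M)).
\]
A spectral-calculus computation gives $\norm{\nabla_G F_\epsilon}_F^2 \le 4\sum_i s_i^{-2}\mathbf{1}_{s_i^2>\epsilon}\le 4n/\epsilon$, so $F_\epsilon$ is $2\sqrt{n/\epsilon}$-Lipschitz in $G$ and Gaussian concentration yields $\P(|F_\epsilon - \E F_\epsilon|>s)\le 2\exp(-cs^2\epsilon/n)$. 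The non-negative truncation error $E := F_\epsilon - \log\mbox{\rm det}^2 = \sum_{i:s_i^2<\epsilon}(\log\epsilon - \log s_i^2)$ is controlled using Theorem~\ref{th: intermediate singular} applied at each $k = 1,\ldots,\lfloor n/2\rfloor$ with the tail parameter tuned to $k$, together with Theorem~\ref{th: smallest singular value} for the extreme singular values. Layering these bounds via the union bound yields tail estimates for $E$ involving the $\exp(-c\sqrt{\epsilon n}/\log n)$ and $e^{-c'n}$ terms, and similarly controls $\E E$. Writing
\[
  \log\mbox{\rm det}^2 - \E\log\mbox{\rm det}^2 = (F_\epsilon - \E F_\epsilon) - (E - \E E),
\]
and choosing $\epsilon$ to balance the Gaussian scale $\sqrt{n\tau/\epsilon}$ against the truncation scale $\sqrt{\epsilon n\log n}$, one obtains $\epsilon \sim \tau^{1/3} n^{1/3}/\log^{2/3} n$ and deviation $\sim (\tau n\log n)^{1/3}$, giving the three tail terms of \eqref{eq-160113}.

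For the expectation sandwich, the lower inequality $\E\log\mbox{\rm det}^2 \le \log\text{\rm per}(A)$ follows at once from Jensen's inequality together with the Godsil--Gutman identity $\E\mbox{\rm det}^2(A_{1/2}\odot G) = \text{\rm per}(A)$. For the reverse inequality, set $Z = \log\mbox{\rm det}^2 - \E\log\mbox{\rm det}^2$, so that $\log\text{\rm per}(A) - \E\log\mbox{\rm det}^2 = \log\E e^Z$. The sub-cubic tails $\P(Z>s)\lesssim \exp(-cs^3/(n\log n))$ inherited from \eqref{eq-160113} in the principal regime $s \lesssim \sqrt{n}$, via a Laplace-method analysis of $\int e^s \P(Z>s)\,ds$, yield the main contribution $\log\E[e^Z\mathbf{1}\{Z\le C\sqrt{n\log n}\}] = O(\sqrt{n\log n})$. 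The residual contribution from larger deviations of $Z$ is handled by combining the sub-exponential and $e^{-c'n}$ tail terms of \eqref{eq-160113} with the deterministic bound $\log\mbox{\rm det}^2 \le 2n\log\norm{M}\le Cn\log n$ on the high-probability event $\norm{M}\le C\sqrt{n}$ and moment bounds on $\norm{M}$ off this event.

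The principal obstacle is obtaining the precise cube-root concentration scale $(n\log n)^{1/3}$: a uniform-in-$k$ truncation argument gives only an $n^{1/2}$ scale, so one must exploit Theorem~\ref{th: intermediate singular} at each index $k$ with the tail parameter $t$ adapted to $k$, and carefully integrate the resulting layered singular value bounds against the $\log$-truncation. The interplay between the truncation level $\epsilon$, the Gaussian concentration scale, and the small singular value bounds is the technical heart of the argument.
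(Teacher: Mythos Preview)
There is a genuine gap in the concentration argument. Your single-threshold truncation $F_\epsilon=\sum_i\phi_\epsilon(s_i^2)$ has Lipschitz constant $2\sqrt{n/\epsilon}$, so Gaussian concentration gives fluctuation $\asymp\sqrt{\tau n/\epsilon}$. On the good singular-value event $s_{n-l}\gtrsim l/\sqrt{n}$, the truncation affects the $L\asymp\sqrt{n\epsilon}$ smallest singular values and contributes an error $E$ of order $\sqrt{n\epsilon}$ (or, cruder, $\sqrt{n\epsilon}\,\log n$). Your claimed balance ``$\sqrt{n\tau/\epsilon}=\sqrt{\epsilon n\log n}$'' yields $\epsilon=(\tau/\log n)^{1/2}$ and deviation $\tau^{1/4}(n\log n)^{1/2}$, \emph{not} $(\tau n\log n)^{1/3}$; the value $\epsilon\sim\tau^{1/3}n^{1/3}\log^{-2/3}n$ you write down does not equalize the two scales you quote. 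More fundamentally, no choice of a single $\epsilon$ can do better than $n^{1/2}$ here (as you yourself note at the end), because the Lipschitz constant of $F_\epsilon$ is governed uniformly by the worst scale, and ``layering the singular-value bounds'' only controls the \emph{size} of the error $E$, not the Gaussian fluctuation of $F_\epsilon$.

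The missing device, which the paper supplies, is a \emph{multi-scale} truncation: one partitions the indices into dyadic blocks $l\in(n_k,n_{k-1}]$ with $n_k=n\cdot 2^{-4k}$, and truncates at a block-dependent level $\epsilon_k\asymp n_k/\sqrt n$ chosen so that, on the good event $s_{n-n_k}\ge\epsilon_k$, the truncation is inactive on that block. The block function $f_k$ then has Lipschitz constant $L_k\asymp 2^{2k}$, and applying Gaussian concentration block by block with deviations $t_k=\sqrt{\tau}\,2^{k+k_*}$ gives a total fluctuation $\sum_k t_k\asymp\sqrt{\tau n/l_*}$ where $l_*=n_{k_*}$. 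Only the bottom $l_*$ singular values are treated crudely, contributing $O(l_*\log n)$. Balancing $\sqrt{\tau n/l_*}$ against $l_*\log n$ now gives $l_*\asymp(\tau n)^{1/3}\log^{-2/3}n$ and the correct scale $(\tau n\log n)^{1/3}$. The cube-root is thus a consequence of the dyadic decomposition of the Lipschitz side, not of refined singular-value input.

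A smaller issue: your plan to bound $\log\E e^{Z}$ by integrating the tail $\P(Z>s)\lesssim e^{-cs^3/(n\log n)}+e^{-c's/\log n}+e^{-c''n}$ runs into trouble with the middle term, since $\int e^{s}e^{-c's/\log n}\,ds$ diverges. The paper sidesteps this by computing $\log\E e^{U}$ for the \emph{truncated} statistic $U$ (which has clean sub-Gaussian tails) at a separate level $l_2\asymp\sqrt{n/\log n}$, and only then comparing $Q(l_2)$ back to $\E\log\det^2$.
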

Theorem \ref{th: graph permanent-intro}
follows from Theorem \ref{th: graph permanent} since the right
side of
\eqref{eq-160113}
does not exceed
$9\exp(-\t)+12 \exp(-c \sqrt{n} /\log n)$. The coefficients $9$ and $12$ can be removed by adjusting the constants $\tilde{C}$ and $c'$.

\begin{proof}
 The proof of Theorem \ref{th: graph permanent} is partially
based on the ideas of
 \cite[Pages 1563--1566]{FRZ}.
We would like to apply the Gaussian concentration inequality to
the logarithm of the determinant of the matrix $A_{1/2} \odot G$,
which can be written as the sum of the logarithms of its singular values.
However, since the logarithm is not a Lipschitz function, we will have
to truncate it in a neighborhood of zero in order to be able to
apply the concentration inequality. This truncation is introduced in
Section \ref{sec-trunc}.

The singular values will be divided into two groups.
For the large values of $n-l$ we use the concentration of
the (sums of subsets) singular values $s_{n-l}(A_{1/2} \odot G)$ around their
mean.
In contrast to \cite{FRZ}, we
do not use the concentration inequality once,
but rather divide the range
of singular values to several subsets, and apply separately
the concentration inequality in each subset.
The
definition of the subsets, introduced
in Section \ref{sec-trunc}, will be
chosen to match the singular values estimates of
Theorem
\ref{th: intermediate singular}.

On the other hand,
when $n-l$ becomes small, the concentration doesn't
provide an efficient estimate. In that
case we use the lower bounds for such singular values obtained
in Theorem \ref{th: smallest singular value}.
Because
the number of  singular values treated this way is small, their total
contribution to the sum of the logarithms will be small as well.
This computation is described in Section \ref{sec-7.2.2}.

Getting rid of the truncation of the logarithm
requires an a-priori rough estimate on the second moment
of
   $\log{\mbox{\rm det}^2(A_{1/2}\odot G)}$, which is presented in
   Lemma \ref{l: second moment} and proved in Section \ref{sec-7.3}.
With this, we arrive in
Section \ref{sec-7.2.3},
to the control of the deviations of
   $\log{\mbox{\rm det}^2(A_{1/2}\odot G)}$ from
   $\E\log{\mbox{\rm det}^2(A_{1/2}\odot G)}$ that is presented in
   \eqref{eq-160113}.

To complete the proof of the Theorem, we will need to relate\\
   $\E\log{\mbox{\rm det}^2(A_{1/2}\odot G)}$ to
   $\log{\E \mbox{\rm det}^2(A_{1/2}\odot G)}=\mbox{\rm perm}(A)$.
   This is achieved in Section \ref{sec-7.2.4} by again truncating the log
   (at a level different than that used before) and employing an exponential
   inequality.

\subsubsection{Construction of the truncated determinant}
\label{sec-trunc}

Let $k_* \in \N$ be a number to be specified later. We choose
truncation dimensions $n_k$ and the truncation levels $\e_k$ for
large codimensions first. For $k=0 \etc k_*$ set
 \begin{align*}
    n_k &=n \cdot 2^{-4k}; \\
    t_k &= \sqrt{\t} \cdot 2^{k+k_*}; \\
    \e_k &=
c_0 \frac{n_k}{\sqrt{n}}= c_0 \sqrt{n} \cdot 2^{-4k}.
 \end{align*}
Here, $c_0$ is a fixed constant to be chosen below.
We also set $l_*=n_{k_*}$.

 For any $n \times n$ matrix $V$ define the function $f(V)$ by
 \[
   f(V)=\sum_{k=1}^{k_*} f_k(V), \quad \text{where }
f_k(V)=\sum_{l=n-n_{k-1}}^{n-n_k-1} \log_{\e_k}(s_{n-l}(V)),
 \]
where $\log_\e(x)=\log(x\vee \e)$.
Recall that the function $S: \R^{n^2} \to \R_+^n$ defined by $S(V)=(s_1(V) \etc s_n(V))$ is $1$-Lipschitz.
 Hence, each function $f_k$ is Lipschitz
with Lipschitz constant
 \[
  L_k \le \frac{\sqrt{n_{k-1}-n_{k}}}{\e_k} \le c' \cdot 2^{2k}.
 \]
 Denote $W=A_{1/2}\odot G$.
 The concentration of the Gaussian measure implies that for an appropriately chosen constant $C$, one has
 \[
   \P \left( | f_k(W)- \E f_k(W)| > C t_k \right)
   \le  2 \exp \left(- \frac{c t_k^2}{L_k^2} \right) \le 2
\exp \left( - 2^{2(k_*-k)} \t \right).
 \]
(For this version, see e.g. \cite[Formula (2.10)]{Ledoux}.)
 Therefore,
\begin{equation}
\label{sabof4}
  \P \left(|f(W)- \E f(W)|> C \sum_{k=1}^{k_*}  t_k \right)
  \le 2 \sum_{k=1}^{k_*} \exp \left( - 2^{2(k_*-k)} \t \right) \le 4 e^{-\t}.
\end{equation}
 Here
 \[
   \sum_{k=1}^{k_*} t_k = \sum_{k=1}^{k_*} \sqrt{\t} 2^{k+k_*} \le 2 \sqrt{\t} 2^{2k_*}
   = 2 \sqrt{\t} \cdot \sqrt{\frac{n}{l_*}}.
 \]

We similarly handle singular values $s_{n-l}$ for $l\geq n-l_*$. Define
the function
$g(V)=\sum_{l=n-n_{k_*}}^n \log_{\e_{k_*}}(s_{n-l}(V))$, whose Lipschitz
constant is bounded by $\sqrt{l_*}/\e_{k^*}= c_0^{-1}\sqrt{n/l_*}$,
and therefore
\begin{equation}
\label{sabof3}
\P\left(|g(W)-\E g(W)|\geq
   c_1 \sqrt{\t} \cdot \sqrt{\frac{n}{l_*}}\right)\leq 2e^{-\tau}
\,.
\end{equation}
Set
$$\e(l)=\left\{\begin{array}{ll}
\e_k,& l\in [n_k+1,n_{k-1}]\\
\e_{k_*}, &
l\leq n_{k_*}=l_*.
\end{array}
\right.$$
Define
$$\widetilde{\mbox{\rm det}}(W, l_*)=\prod_{l=0}^{n-1}
(s_{n-l}(W)\vee \e(l))^2.$$
We include $l_*$ as the second argument to emphasize the dependence on the truncation level.
From
\eqref{sabof4} and \eqref{sabof3},
 we obtain the large deviation bound for the logarithm of the truncated determinant:
\begin{equation}
\label{satof1}
\P(|\log \widetilde{\text{det}}(W, l_*) -\E\log \widetilde{\text{det}}(W, l_*)|\geq c_2 \sqrt{\tau}\sqrt{n/l_*})
\leq 6 e^{-\tau}.
\end{equation}

\subsubsection{Basic concentration estimate for $\log \mbox{\rm det}^2(W)$}
\label{sec-7.2.2}
Our next goal is to get rid of the truncation, i.e., to relate
$\widetilde{\text{det}}(W, l_*)$ to $\mbox{\rm det}^2(W)$.
Toward this end,
 define
the set of $n \times n$ matrices  $\WW_1$ as follows:
 \[
  \WW_1= \{ V \mid \exists k, \  1\le k \le k_*,
 \ s_{n-n_k}(V) < \e_k \}.
 \]
 Then by  Theorem \ref{th: intermediate singular},
 \[
  \P(W \in \WW_1) \le \sum_{k=1}^{k_*} \left(c_0 \e_k \cdot \frac{\sqrt{n}}{n_k} \right)^{n_k} +k_* e^{-c n}
  \le 2 e^{-n_{k_*}/4},
 \]
 with an appropriate choice of the constant $c_0$.

For codimensions smaller than $l_*=n_{k_*}$ we simply estimate
the total
contribution of small singular values.
 For $0 \le l\leq  l_*$ set
 \[
   d_{n-l}=n^{-\frac{l_*}{(l+1) \log l_*} -\frac{1}{2}}.
 \]
 Let $\WW_2$ be the set of $ n \times n$ matrices defined by
 \[
  \WW_2= \{ W \mid \exists l \le l_*, \ s_{n-l}(W) \le d_{n-l} \}.
 \]
 Applying  Theorem \ref{th: smallest singular value} for $0 \le l<4$ and \ref{th: intermediate singular} for $4 \le l \le l_*$, we obtain
 \begin{align*}
   \P ( W \in \WW_2)
   &\le \sum_{l=0}^{3} c \sqrt{n} \cdot d_{n-l}
    +\sum_{l=4}^{l_*} \left(c \frac{\sqrt{n}}{l} \cdot d_{n-l} \right)^{l/4} +(l_*+1) e^{-cn} \\
   &\le C l_* \cdot n^{- \frac{l_*}{4 \log l_*}}
    \le C l_* \cdot \exp(-l_*/4) \le \exp(-l_*/8).
 \end{align*}
 Assume that $V \notin \WW_2$. Then
 \[
  \sum_{l=0}^{l_*} \log s_{n-l}^{-1}(V)
  \le \sum_{l=0}^{l_*} \log n \cdot \left(\frac{1}{2}+ \frac{l_*}{(l+1) \log l_*} \right)
  \le \frac{3}{2}l_* \log n.
 \]

Let
$\WW_3$ denote
the set of all $n \times n$ matrices $V$ such that $\norm{V} \ge n$.
Then $\P(W \in \WW_3)< e^{-n}$. If $V \notin \WW_3$, then
 \[
  \sum_{l=0}^{l_*} \log s_{n-l}(V)
  \le l_* \log n.
 \]
 Therefore, for any $V\in (\WW_2\cup \WW_3)^c$,
   $$-\frac{3}{2} l_*\log n\leq \sum_{l=0}^{l_*}\log s_{n-l}(V)
   \leq
   \sum_{l=0}^{l_*}\log (s_{n-l}(V)\vee \e_{k_*})\leq l_*
   \log n\,.
$$
We thus obtain that
if $W\in (\WW_1\cup \WW_2\cup \WW_3)^c$ then
\begin{equation}
\label{satof1b}
|\log \mbox{\rm det}^2(W)-\log \widetilde{\text{det}}(W, l_*)|\leq \frac32 l_*\log n
\end{equation}
Note that the event $W\in (\WW_1\cup \WW_2\cup\WW_3)^c$ has probability
larger than $1-3e^{-l_*/8}$.

Setting
$$Q(l_*)=\E\log
\widetilde{\text{det}}(W, l_*),$$
we thus conclude from \eqref{satof1}
that
\begin{multline}
  \label{eq-120912h}
  \P\left(\left|
  \log \mbox{\rm det}^2(W)-Q(l_*)\right|
  \geq \frac32 l_*\log n+c_2\sqrt{\t n/l_*}
  \right) \\
  \leq 6\exp (-\t)+3 \exp(-l_*/8) \,.
\end{multline}
 This is our main concentration estimate.
 We will use it with $l_*$ depending on $\t$ to obtain an optimized concentration bound. Also, we will use special choices of $l_*$  to relate a hard to evaluate  quantity $Q(l_*)$ to the characteristics of the distribution of $ \mbox{\rm det}^2 (W)$, namely to $\E \log \mbox{\rm det}^2(W)$ and $\log \E \mbox{\rm det}^2(W)$.
 This will be done by comparing  $\E \log \mbox{\rm det}^2(W)$ to $Q(l_1)$ and  $\log \E \mbox{\rm det}^2(W)$ to $Q(l_2)$  for different values $l_1$ and $l_2$. This means that we also have to compare $Q(l_1)$ and $Q(l_2)$. The last comparison requires only \eqref{eq-120912h}.

 Let $100 \le l_1, l_2 \le n/2$. For $j=1,2$, denote
 \[
   \tilde{W}_j
   = \left\{ V \Big| |\log \mbox{\rm det}^2 (V)-Q(l_j)|
     \le \frac{3}{2} l_j \log n + 4 c_2 \sqrt{n/l_j} \right\}.
 \]
  Using \eqref{eq-120912h} with $\t=16$, we show that $\P(\tilde{W}_j) >1/2$ for $j=1,2$. This means that $\tilde{W}_1 \cap \tilde{W}_2 \neq \emptyset$. Taking $V \in \tilde{W}_1 \cap \tilde{W}_2$, we obtain
  \begin{align} \label{eq: l_1 and l_2}
   |Q(l_1)- Q(l_2)|
   &\le |Q(l_1)- \log \mbox{\rm det}^2 (V)| + |\log \mbox{\rm det}^2 (V)- Q(l_2)| \\
   &\le \frac{3}{2}(l_1+l_2) \log n + c n^{1/2}(l_1^{-1/2}+l_2^{-1/2}).
   \notag
  \end{align}

\subsubsection{Comparing $Q(l_*)$ to $\E \log \mbox{\rm det}^2(W)$}
\label{sec-7.2.3}
 Our next task is to relate  $\E \log \mbox{\rm det}^2(W)$ to $Q(l_*)$ for some $l_*=l_1$. Toward this end we
 optimize the  left side of  \eqref{eq-120912h}  for $\t=8$ by choosing $l_*=l_1$, where
 \[
  2 n^{1/3} \log^{-2/3}n \le l_1 =n \cdot 2^{-4k_1} < 32  n^{1/3} \log^{-2/3}n.
 \]
Then we get from \eqref{eq-120912h} that there exists $c>0$ such that for all $\t \ge 1$,
\begin{multline}
  \label{eq-120912j}
  \P\left(\left|
  \log \mbox{\rm det}^2(W)-Q(l_1)\right|\geq c \tau^{1/2} ( n\log n)^{1/3}\right)\\
  \leq 6\exp(-\t)+3\exp(-l_1 /8)\,.
\end{multline}

  Let $\WW_4$ be the set of all $n \times n$ matrices $V$
  such that $|\log \det (V)^2 -Q(l_1)| > \sqrt{n}$.
  The inequality \eqref{eq-120912j}
  applied with $\t=c' l_1$ for an appropriate $c'$  reads
 \begin{equation}\label{WW_4}
  \P(W \in \WW_4)
  \le  \exp \left(- c l_1  \right)
  = \exp \left(- C n^{1/3} \log^{-2/3}n  \right).
 \end{equation}
 We have
 \begin{align*}
  &|\E \log \mbox{\rm det}^2(W)-Q(l_1)|
  \le  \E | \log \mbox{\rm det}^2(W)-Q(l_1)| \\
  =&\E | \log \mbox{\rm det}^2(W)-Q(l_1)|\cdot \mathbf{1}_{\WW_4^c}(W) + \E | \log \mbox{\rm det}^2(W)-Q(l_1)|\cdot \mathbf{1}_{\WW_4}(W).
 \end{align*}
 The first term here can be estimated by integrating the tail in
 \eqref{eq-120912j}:
 \begin{align*}
  &\E | \log \mbox{\rm det}^2(W)-Q(l_1)|\cdot \mathbf{1}_{\WW_4^c}(W) \\
  &\le c ( n \log n)^{1/3} + \int_{c ( n \log n)^{1/3}}^{\sqrt{n}}  \P( | \log \mbox{\rm det}^2(W)-Q(l_1)|>x) \, dx \\
  &\le c ( n \log n)^{1/3} + \int_1^{c' l_1} 2 \exp \left(- \left(\frac{ x}{c(n \log n)^{1/3}} \right)^2\right) \, dx
  \le C ( n \log n)^{1/3}.
 \end{align*}
 To bound the second term, we need the following
rough estimate of the second moment of the logarithm of the determinant.
 The proof of this estimate will be presented in the next subsection.
 \begin{lemma} \label{l: second moment}
  Let $W=G \odot A'_{1/2}$, where $G$ is the standard Gaussian matrix, and $A'$ is a deterministic matrix with entries $0 \le a_{i,j} \le 1$ for all $i,j$ having at least one generalized diagonal with entries $a'_{i, \pi(i)} \ge c/n$ for all $i$.
  Then
  \[
    \E \log^2 \mbox{\rm det}^2(W) \le \bar{C} n^3.
  \]
 \end{lemma}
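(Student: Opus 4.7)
My plan is to split $\log^2\mbox{\rm det}^2(W) = (\log^+\mbox{\rm det}^2 W)^2 + (\log^-\mbox{\rm det}^2 W)^2$ and show that each piece has expectation $O(n^2\log^2 n)$; since $n^2\log^2 n \le \bar C n^3$ for an absolute constant, this suffices.

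The upper-tail piece is routine. Hadamard's inequality gives $|\mbox{\rm det}(W)|\le \prod_{i=1}^n \|W_i\|$, where $W_i$ is the $i$-th column; since $a'_{i,j}\le 1$, the squared norm $\|W_i\|^2 = \sum_j a'_{i,j} g_{i,j}^2$ is stochastically dominated by $\chi^2_n$. Combining the sub-additive estimate $\log^+\mbox{\rm det}^2 W \le 2\sum_i \log^+\|W_i\|$ with Cauchy--Schwarz and the elementary second-moment bound $\E(\log^+\chi^2_n)^2 = O(\log^2 n)$ (which follows from standard concentration of $\chi^2_n$ around $n$) yields $\E(\log^+\mbox{\rm det}^2 W)^2 \le C n^2\log^2 n$.

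For the lower tail, which is where the real content lies, I would invoke the Carbery--Wright anti-concentration inequality. Viewed as a polynomial of degree $n$ in the $n^2$ i.i.d.\ standard Gaussians $g_{i,j}$, the determinant satisfies
\[
  \P(|\mbox{\rm det}(W)|\le \e) \le C n \bigl(\e/\|\mbox{\rm det}(W)\|_{L^2}\bigr)^{1/n},\qquad \e>0.
\]
The Godsil--Gutman identity \eqref{GG} gives $\|\mbox{\rm det}(W)\|_{L^2}^2 = \text{\rm per}(A')$, and the hypothesized generalized diagonal $\pi$ forces $\text{\rm per}(A') \ge \prod_i a'_{i,\pi(i)} \ge (c/n)^n$. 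Substituting and simplifying, one obtains $\P(\mbox{\rm det}^2 W \le e^{-t}) \le C' n^{3/2} e^{-t/(2n)}$ for all $t>0$. Using this together with the trivial bound $\P(\cdot)\le 1$ in the identity $\E(\log^-\mbox{\rm det}^2 W)^2 = 2\int_0^\infty t \, \P(\mbox{\rm det}^2 W\le e^{-t})\,dt$, and splitting at the threshold $T_0\asymp n\log n$ where the Carbery--Wright bound crosses $1$, the piece on $[0,T_0]$ contributes at most $T_0^2 = O(n^2\log^2 n)$, while the tail $[T_0,\infty)$ contributes an exponentially smaller amount.

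The main obstacle is producing any useful lower bound on $|\mbox{\rm det}(W)|$ in the presence of a possibly highly degenerate variance profile; this is bypassed entirely by the single qualitative hypothesis of a heavy generalized diagonal, which is exactly what is needed to lower-bound $\text{\rm per}(A')$ and hence to feed a nontrivial $L^2$-norm into Carbery--Wright. Nothing else from the earlier sections of the paper (such as the broad-connectedness hypothesis or the singular-value estimates) is required here.
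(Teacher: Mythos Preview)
Your proof is correct, and for the lower-tail part it takes a genuinely different route from the paper. Both arguments split $\log^2\det^2(W)$ into $(\log^+)^2$ and $(\log^-)^2$, and the upper-tail treatments via Hadamard are essentially the same (the paper uses the cruder inequality $\log_+(\sum u_i)\le\sum\log(1+u_i)$ and lands at $Cn^3$ rather than your $Cn^2\log^2 n$, but this is immaterial). For the lower tail, however, the paper proceeds by an elementary induction: after rescaling so that the distinguished diagonal has entries $\ge 1$, it expands the determinant along the first row, conditions on all Gaussians except $g_{1,1}$, uses the one-dimensional bound $\sup_x \E\log_-^2(g+x)\le c'$, and obtains $\E\log_-^2\det^2(W'')\le c'n^2$ by induction. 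Your argument instead reads the determinant as a degree-$n$ polynomial in the $n^2$ Gaussians, feeds the bound $\|\det W\|_{L^2}^2=\mathrm{per}(A')\ge(c/n)^n$ into Carbery--Wright, and integrates the resulting small-ball estimate. Your approach is cleaner and slightly sharper (yielding $O(n^2\log^2 n)$ overall), at the cost of invoking a nontrivial black-box inequality; the paper's approach is entirely self-contained and elementary. Both correctly isolate the single role of the heavy generalized diagonal: for you it lower-bounds $\mathrm{per}(A')$, for the paper it ensures the recursive one-variable conditioning is nondegenerate.
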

 Since $A$ is the matrix of a $(\d, \k)$-broadly connected graph, it satisfies the conditions of Lemma \ref{l: second moment}.
 The estimate of the second term follows from Lemma \ref{l: second moment}, \eqref{eq-120912j}, and the Cauchy--Schwarz inequality:
 \begin{align*}
    &\E | \log \mbox{\rm det}^2(W)-Q(l_1)|\cdot \mathbf{1}_{\WW_4}(W) \\
   &\le  \left( \E  |\log \mbox{\rm det}^2(W)-Q(l_1)|^2 \right)^{1/2} \cdot \P^{1/2}(W \in \WW_4) \\
   & \le (\bar{C}n^3+2 Q^2(l_1))^{1/2} \cdot  \exp \left(- (C/2) n^{1/3} \log^{-2/3}n  \right).
 \end{align*}
 Combining the bounds for $\WW_4$ and $\WW_4^c$, we get
 \begin{multline*}
  |\E \log \mbox{\rm det}^2(W)-Q(l_1)|
  \\ \le  C ( n \log n)^{1/3}+ (\bar{C} n^3+2 Q^2(l_1))^{1/2} \cdot  \exp \left(- (C/2) n^{1/3} \log^{-2/3}n  \right),
 \end{multline*}
 which implies
 \begin{equation} \label{eq: expectation log}
  |\E \log \mbox{\rm det}^2(W)-Q(l_1)| \le  C' ( n \log n)^{1/3}.
 \end{equation}

 \subsubsection{Comparing   $\log \E \, \mbox{\rm det}^2(W)$ to $\E \log  \mbox{\rm det}^2(W)$ }
 \label{sec-7.2.4}
 We start with relating $Q(l_1)$ and $\log \E \mbox{\rm det}^2(W)=\log \text{perm}(A)$.
 To this end we will use a different
 value of $l_*$. Namely, choose $l_2$ so that
 \[
  \sqrt{n/\log n} \le l_2=n \cdot 2^{4 k_2} < 16 \sqrt{n/\log n}.
 \]
 The reasons for this choice will become clear soon.
 Denote for
 brevity
 \[
   U:= \log \widetilde{\text{det}}(W, l_2) -\E\log \widetilde{\text{det}}(W, l_2).
 \]
 We deduce from
 \eqref{satof1} that
\begin{eqnarray*}
\E(e^{U})&\leq &
\E(e^{|U|})\leq 1+\int_0^\infty e^t \P(|U|\geq t) \, dt\\
&\leq & 1+ 6\int_0^\infty e^t e^{-t^2l_2/c_2n} \,dt
\leq 1+c_3e^{c_4 n/l_2}\,.
\end{eqnarray*}
Taking logarithms,
we conclude that
\begin{eqnarray*}
 \log \E  \mbox{\rm det}^2(W) &\leq &
 \log \E \widetilde{\text{det}}(W, l_2) \\
&\leq &
\E \log \widetilde{\text{det}}(W, l_2)+
\log (1+c_3e^{c_3 n/l_2})\nonumber\\
&\leq&
Q(l_2)+c_4 n/l_2\,.
\nonumber
\end{eqnarray*}
The inequality \eqref{eq: l_1 and l_2} implies
\begin{align} \label{eq: Q-perm 1a}
  &\log \E  \mbox{\rm det}^2(W)  \\
  &\le Q(l_1)+ c_4 n/l_2 +\frac{3}{2}(l_1+l_2) \log n
      + c n^{1/2}(l_1^{-1/2}+l_2^{-1/2})  \notag \\
  &\le Q(l_1)+ c_5 \sqrt{n \log n}.
  \notag
\end{align}
The value of $l_2$ was selected to optimize the inequality \eqref{eq: Q-perm 1a}.
To bound $Q(l_1)-\log \E  \mbox{\rm det}^2(W)$ from above, we use \eqref{eq-120912j} with $\t=4$  to derive
\begin{multline}  \label{eq: Q-perm 2}
  \P\left( |\log \mbox{\rm det}^2(W)-Q(l_1)| \le 2 c( n \log n)^{1/3} \right) \\
  \geq 1- 6e^{-4} -3e^{-l_1/8}> \frac{1}{2}.
\end{multline}
On the other hand,
 Chebyshev's inequality applied to the random variable
$\mbox{\rm det}^2(W)/\E\mbox{\rm det}^2(W)$ implies that
$$
  \P(\mbox{\rm det}^2(W)\leq 2 \E\mbox{\rm det}^2(W))
  \geq \frac{1}{2},
$$
and therefore
\begin{equation}
\label{eq: Q-perm 3}
\P(\log \mbox{\rm det}^2(W)-\log \E\mbox{\rm det}^2(W)\leq \log 2)
  \geq \frac{1}{2}.
\end{equation}
This means that the events in \eqref{eq: Q-perm 2} and \eqref{eq: Q-perm 3} intersect, and so
\[
  Q(l_1)-\log \E  \mbox{\rm det}^2(W)
  \le 2c ( n \log n)^{1/3}+ \log 2.
\]
Together with \eqref{eq: Q-perm 1a} this provides a two-sided bound
\begin{align*}
  |Q(l_1)-\log \E  \mbox{\rm det}^2(W) |
  &\le \max \left( c_5 \sqrt{n \log n}, \  2c ( n \log n)^{1/3}+ \log 2 \right) \\
  &= c_5 \sqrt{n \log n}
\end{align*}
for a sufficiently large $n$.
The combination of this inequality with \eqref{eq: expectation log} yields
\[
  |\E  \log  \mbox{\rm det}^2(W) -\log \E  \mbox{\rm det}^2(W) |
  \le  c_6 \sqrt{n \log n}.
\]

\subsubsection{Concentration around \  $\E  \log  \mbox{\rm det}^2(W)$}
 To finish the proof we have to derive the concentration inequality. This will be done by choosing the truncation parameter $l_*$ depending on $\t$. Namely, assume first that $1 \le \t \le n^2 \log^2 n$ and define $l_*$ by
  \[
   2^{-8} \t^{1/3} n^{1/3} \log^{-2/3}n < l_* = n \cdot 2^{-4k_*} \le 2^{-4} \t^{1/3} n^{1/3} \log^{-2/3}n.
 \]
 The constraint on $\t$ is needed to guarantee that $k_*\ge 1$. Substituting  this $l_*$ in \eqref{eq-120912h}, we get
\begin{multline*}
  \P\left(\left|  \log \mbox{\rm det}^2(W)-Q(l_*)\right|
  \geq \frac32 l_*\log n+c_2\sqrt{\t n/l_*}
  \right) \\
  \leq 6\exp(-\t)+3 \exp \left(-c \t^{1/3} n^{1/3} \log^{-2/3}n  \right).
\end{multline*}
By \eqref{eq: expectation log} and \eqref{eq: l_1 and l_2}, for such $\t$ we have
\begin{align*}
    &|\E \log \mbox{\rm det}^2(W)-Q(l_*)| \\
   &\le |\E \log \mbox{\rm det}^2(W)-Q(l_1)| + |Q(l_1)-Q(l_*)| \\
   &\le    C' ( n \log n)^{1/3}  +  \frac{3}{2}(l_1+l_*) \log n + c n^{1/2}(l_1^{-1/2}+l_*^{-1/2}) \\
   &\le C'' (\t n \log n)^{1/3}.
\end{align*}
Together with the previous inequality, this implies
\begin{multline*}
  \P\left(\left|  \log \mbox{\rm det}^2(W)-\E \log \mbox{\rm det}^2(W)\right|
  \geq \tilde{C} (\t n \log n)^{1/3}
  \right) \\
  \leq 6\exp(-\t)+3 \exp \left(-c \t^{1/3} n^{1/3} \log^{-2/3}n  \right),
\end{multline*}
if the constant $\tilde{C}$ is chosen large enough.

If $\t>\t_0:=n^2 \log^2 n$, we use the  inequality above with $\t=\t_0$ and obtain
\begin{multline*}
  \P\left(\left|  \log \mbox{\rm det}^2(W)-\E \log \mbox{\rm det}^2(W)\right|
  \geq \tilde{C} (\t n \log n)^{1/3}
  \right)
  \leq 9 \exp \left(-c' n  \right),
\end{multline*}
Finally, for all $\t \ge 1$, this implies
\begin{multline*}
  \P\left(\left|  \log \mbox{\rm det}^2(W)-\E \log \mbox{\rm det}^2(W)\right|
  \geq \tilde{C} (\t n \log n)^{1/3}
  \right) \\
  \leq 6\exp(-\t)+3 \exp \left(-c \t^{1/3} n^{1/3} \log^{-2/3}n  \right) +  9 \exp \left(-c' n  \right).
\end{multline*}
which completes the proof of Theorem \ref{th: graph permanent}.
\end{proof}

\subsection{Second moment of the logarithm of the determinant}
\label{sec-7.3}
It remains to prove Lemma \ref{l: second moment}. The estimate of the
lemma, which was necessary in the proof of \eqref{eq-120912},
is very far from being precise, so we will use rough, but elementary bounds.
 \begin{proof}[Proof of Lemma \ref{l: second moment}]
  We will estimate the expectations of the squares of the positive and negative parts of the logarithm separately.
  Denote by $W_1 \etc W_n$ the columns of the matrix $W$. By the Hadamard inequality,
  \begin{multline*}
    \E \log_+^2 \det (W)^2 \le \sum_{j=1}^n \E \log_+^2 \norm{W_j}_2^2
    \le n \sum_{j=1}^n \sum_{i=1}^n \E \log^2 (1+ w_{i,j}^2) \le Cn^3.
  \end{multline*}
  Here in the second inequality we used an elementary bound
  \[
    \log_+ ( \sum_{i=1}^n u_i) \le \sum_{i=1}^n \log (1+u_i)
  \]
  valid for all $u_1 \etc u_n \ge 0$, and the Cauchy--Schwarz inequality. The last inequality holds since $w_{i,j}$ is a normal random variable of variance at most $1$.

  To prove the bound for $\E \log_-^2 \det (W)^2$, assume that $a'_{i,i} \ge c/n$ for all $i \in [n]$.
   Set $A''=\sqrt{n/c}A'_{1/2}$, so $a_{i,i}'' \ge 1$, and let $W''=A'' \odot G$. Then $\E \log_-^2 \det (W)^2 \le \E \log_-^2 \det (W'')^2 + 2 n \log n$. We will prove the following estimate by induction:
   \begin{equation}\label{induction}
     \E \log_-^2 \det (W'')^2 \le c' n^2,
   \end{equation}
   where the constant $c'$ is chosen from the analysis of the one-dimensional case.

  For $n=1$ this follows from the inequality
  \begin{equation}\label{one-dim}
    \E \log_-^2 (w_{1,1}+x) \le c',
  \end{equation}
  which holds for all $x \in \R$. Assume that \eqref{induction} holds for n. Denote by $\E_1$ the expectation with respect to $g_{1,1}$ and by $\E'$ the expectation with respect to $G^{(1)}$, which will denote the other entries of $G$. Denote by $D_{1,1}$ the minor of $W''$ corresponding to the entry $(1,1)$.
  Note that $D_{1,1} \neq 0$ a.s.
  Decomposing the determinant with respect to the first row, we obtain
  \begin{multline*}
     \E \log_-^2 \det (W'')^2
      =\E' \left( \E_1 \left[ \log_-^2 (a_{1,1}'' g_{1,1} D_{1,1} +Y) \mid G^{(1)} \right] \right) \\
     =\E' \left( \E_1 \left[  \left(\log_-(a_{1,1}'' g_{1,1} +\frac{Y}{D_{1,1}}) + \log_-(D_{1,1})  \right)^2 \mid G^{(1)} \right] \right).
  \end{multline*}
  Since $Y/D_{1,1}$ is independent of $g_{1,1}$, inequality \eqref{one-dim} yields
  \[
     \E_1 \left(  \log_-^2 \left((a_{1,1}'' g_{1,1} +\frac{Y}{D_{1,1}} \right) \mid G^{(1)}  \right) \le c.
  \]
  Therefore, by Cauchy--Schwarz inequality,
  \begin{multline*}
     \E \log_-^2 \det (W'')^2 \\
     \le \E'   \left( c'+2 \E_1 \left[  \log_- \left(a_{1,1}'' g_{1,1} +\frac{Y}{D_{1,1}} \right) \mid G^{(1)} \right] \cdot \log_-(D_{1,1}) +\log_-^2(D_{1,1})   \right) \\
     \le \left(\sqrt{c'}+ \sqrt{ \E' \log_-^2(D_{1,1})} \right)^2.
  \end{multline*}
  By the induction hypothesis, $\E' \log_-^2(D_{1,1}) \le c' n^2$, so
  \[
     \E \log_-^2 \det (W'')^2 \le    c'(n+1)^2.
  \]
  This proves the induction step, and thus completes the proof of Lemma \ref{l: second moment}.
 \end{proof}

  Theorem \ref{th: matrix permanent-intro} is proved similarly, using this time
  Theorem
  \ref{th: doubly stochastic} instead of Theorem \ref{th: graph matrix}, and taking
into account the degradation of the Lipschitz constant due to the presence of $b_n$. We omit further details.

\subsection{Concentration far away from permanent}
 Consider an approximately doubly stochastic matrix $B$ with all entries of order $\Omega(n^{-1})$, which has some entries of order $\Omega(1)$. For such matrices the conditions of Theorem \ref{th: matrix permanent-intro} are satisfied with $\d,\k=1$, so the Barvinok--Godsil-Gutman estimator is strongly concentrated around $\E \log \text{det}^2 (B_{1/2} \odot G)$. Yet, the second inequality of this Theorem reads
 \[
    \log \text{per}(B)
   \le \E \log \text{det}^2 (B_{1/2} \odot G)
   +C'n \log^{c'}c n,
 \]
 which is too weak  to obtain a subexponential deviation of the estimator from the permanent. However, the next lemma shows that the inequality above is sharp up to a logarithmic term. This means, in particular, that
 the
 Barvinok--Godsil-Gutman estimator for such matrices can be concentrated around a value, which is $\exp(cn)$ away from the permanent.

\begin{lemma}
  \label{l: example}
  Let $\a>0$, and let $B$ be an $n \times n$ matrix with entries
  \[
   b_{i,j}=
   \begin{cases}
    \a/n, &\text{for } i \neq j; \\
    1,    &\text{for } i=j.
   \end{cases}
  \]
  There exist constants $\alpha_0,\beta>0$ so that
  if $0< \alpha<\alpha_0$ then
  \begin{equation}
    \label{eq-12of1}
    \liminf_{n\to\infty}
    \frac1n \left| \E\log \det(B_{1/2}\odot G)^2-
    \log \E \det (B_{1/2}\odot G)^2\right|\geq \beta
    .
  \end{equation}
\end{lemma}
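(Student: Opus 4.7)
The plan is to show that $\log\text{per}(B)$ stays bounded uniformly in $n$, while $\E\log\det(B_{1/2}\odot G)^2$ is at most $-c\cdot n$ once $\alpha$ is small; the gap in \eqref{eq-12of1} then grows linearly in $n$.

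For the permanent, I would group permutations $\pi$ by the number $j=n-|\text{Fix}(\pi)|$ of non-fixed points. A permutation with $j$ non-fixed points contributes $(\alpha/n)^j$ to $\text{per}(B)$, and the number of such permutations is $\binom{n}{j}D_j\le n^j$, where $D_j\le j!$ denotes the number of derangements of $j$ elements. Hence
\[
\text{per}(B)\le \sum_{j=0}^n \alpha^j \le \frac{1}{1-\alpha},
\]
so $0\le \log\text{per}(B)\le -\log(1-\alpha)=O(1)$ uniformly in $n$ for $\alpha<1$ (the lower bound is from the identity permutation contributing $1$).

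For the other side, set $W=B_{1/2}\odot G$ and let $R_1,\dots,R_n$ denote its rows. Hadamard's inequality gives $\det(W)^2\le\prod_i\|R_i\|_2^2$, hence $\E\log\det(W)^2 \le \sum_i\E\log\|R_i\|_2^2$. Each row satisfies $\|R_i\|_2^2 = g_{i,i}^2+V_i$, where $V_i=(\alpha/n)\sum_{j\ne i}g_{i,j}^2$ is independent of $g_{i,i}$ with mean $\alpha(n-1)/n\le\alpha$. Conditioning on $g_{i,i}$ and applying Jensen's inequality to the concave function $\log$ yields
\[
\E[\log(g_{i,i}^2+V_i)\mid g_{i,i}] \le \log(g_{i,i}^2+\E V_i) \le \log(g_{i,i}^2+\alpha),
\]
so $\E\log\|R_i\|_2^2\le \E\log(g^2+\alpha)$ where $g\sim N(0,1)$.

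Since $\log(g^2+\alpha)\downarrow \log g^2$ as $\alpha\downarrow 0$ and $\log g^2$ is integrable with $\E\log g^2=-(\gamma+\log 2)<0$, monotone convergence gives $\E\log(g^2+\alpha)\to -(\gamma+\log 2)$ as $\alpha\downarrow 0$. Choose $\alpha_0>0$ small enough that $\E\log(g^2+\alpha)\le -c_1$ for some $c_1>0$ and all $\alpha\in(0,\alpha_0]$. Combining the two bounds, for such $\alpha$,
\[
\log\E\det(W)^2 - \E\log\det(W)^2 \ge 0 - (-c_1 n) = c_1 n,
\]
which yields \eqref{eq-12of1} with any $\beta\le c_1$. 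The argument is technically routine; the observation worth highlighting is that conditioning on the diagonal $\{g_{i,i}\}$ before applying Jensen replaces $V_i$ by its mean $\le \alpha$ and bypasses any concentration estimate for the chi-squared row sums, which would otherwise be the natural obstacle.
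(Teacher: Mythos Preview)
Your proof is correct and considerably simpler than the paper's. The paper argues indirectly: it invokes the concentration estimate \eqref{eq-120912b} to reduce the question about $\E\log\det(W)^2$ to an almost-sure upper bound on $\frac1n\log\det(W)^2$, then expands the determinant as $\sum_\ell A_\ell$ over permutations with $\ell$ fixed points and controls each $|A_\ell|$ via Chebyshev's inequality and a union bound over~$\ell$. Your route---Hadamard's inequality to reduce to row norms, then Jensen conditionally on the diagonal entry to replace the off-diagonal chi-square by its mean, then the limit $\E\log(g^2+\alpha)\to\E\log g^2<0$---is entirely elementary and self-contained; it uses nothing from the rest of the paper. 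One cosmetic point: the passage $\E\log(g^2+\alpha)\to\E\log g^2$ is not literally monotone convergence (the sequence decreases), but dominated convergence with dominator $|\log g^2|+|\log(g^2+1)|$ does the job. Your permanent bound $\text{per}(B)\le (1-\alpha)^{-1}$ via $\binom{n}{j}D_j\le n^j$ is also cleaner than what the paper needs, since the paper only uses the trivial $\text{per}(B)\ge 1$.
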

\begin{proof}
  Recall that from \eqref{eq-120912b}, we have that for any fixed $\alpha<1$,
  the random variable
    $$
    \frac1n \left| \E\log \det(B_{1/2}\odot G)^2-
    \log  \det (B_{1/2}\odot G)^2\right|$$
    converges to $0$ (in probability and a.s.).
    Since
    $$\E \det (B_{1/2}\odot G)^2=\mbox{\rm per}(B)\geq 1\,,$$
    it thus suffices to show that, with constants as in the statement of the
    lemma,
    \begin{equation}
      \label{eq-12of2}
    \liminf_{n\to\infty}
    \frac1n  \log \det(B_{1/2}\odot G)^2\leq -\beta\,, \quad a.s.\,.
  \end{equation}
  We rewrite the determinant as a sum over permutations with $\ell$
  fixed points.
  We then have
$$ \det(B_{1/2}\odot G)=
\sum_{\ell=0}^n \sum_{F\subset [n], |F|=\ell}
\left(\prod_{i\in F}G_{ii}\right)
\frac{(-1)^{\sigma(F)}
M_F \alpha^{(n-\ell)/2}}{n^{(n-\ell)/2}}
=: \sum_{\ell=0}^n A_\ell\,,$$
where $M_F$ is the determinant of an $(n-\ell)\times (n-\ell)$
matrix with i.i.d. standard Gaussian
entries, $EM_F^2=(n-\ell)!$, $\sigma(F)$ takes values in $\{-1,1\}$
and $M_F$ is independent of
$\prod_{i\in F}G_{ii}$. (Note that $M_{F_1}$ is not independent of
$M_{F_2}$ for $F_1\neq F_2$.)

Recall that
\begin{equation}
  \label{eq-12of3}
  \binom{n}{\ell} \leq  e^{nh(\ell_n)}\,,
\end{equation}
where $\ell_n=\ell/n$ and $h$ is the entropy function,
$h(x)=-x\log x-(1-x)\log(1-x)\leq \log 2$.

We will need the following easy consequence of Chebyshev's inequality: for
any $y>0$,
\begin{equation}
  \label{eq-12of4}
  \P(|\prod_{i=1}^\ell G_{ii}|\geq e^{-y\ell})
  \leq (\E|G_{11}|)^\ell e^{y\ell}=
  \left(\sqrt{\frac{2}{\pi}}e^y\right)^\ell\,.
\end{equation}
It is then clear that there exist $\delta_1,\delta_2>0$ so that,
for any $\ell_n>(1-\delta_1)$, one has
\begin{equation}
  \label{eq-12of3a}
  \binom{n}{\ell}
  \P(|\prod_{i=1}^\ell G_{ii}|\geq e^{-\delta_2 n})
  \leq \frac{1}{n^3}\,.
\end{equation}
Choose now $\delta_1'\leq \delta_1$ positive so that
\begin{equation}
  \label{eq-12of4a}
  \delta_2>3h(1-\delta_1')\,,
\end{equation}
which is always possible since $h(\cdot)$ is continuous and $h(1)=0$.

We will show that we can find $\alpha_0>0$ such that for any
$\alpha<\alpha_0$, for all $n$ large and any $\ell$,
\begin{equation}
  \label{eq-12of5}
  \P(|A_\ell|\geq e^{-\delta_2n/2}) \leq \frac{2}{n^3}\,.
\end{equation}
This would imply \eqref{eq-12of2} and conclude the proof of the lemma.

To see \eqref{eq-12of5}, we argue separately for $\ell_n\geq (1-\delta_1')$
and $\ell_n<(1-\delta_1')$. In either case, we start with the inequality
\begin{multline}
  \label{eq-12of6}
  \P(|A_\ell| \geq e^{-\delta_2n/2})\\
  \leq \binom{n}{\ell}
  \P\left(\left(\frac{\alpha}{n}\right)^{n(1-\ell_n)/2}
  \left(\prod_{i=1}^\ell|G_{ii}|\right)
  |M_{[\ell]}|\geq \binom{n}{\ell}^{-1} e^{-\delta_2 n/2}\right).
\end{multline}
Considering first $\ell_n\geq (1-\delta_1')$, we estimate the right side in
\eqref{eq-12of6} by
\begin{align}
  \label{eq-12of7}
  &\binom{n}{\ell} \P(\left(\prod_{i=1}^\ell|G_{ii}|\geq e^{-\delta_2 n}\right)
  \\+
   &\binom{n}{\ell}
  \P\left(\left(\frac{\alpha}{n}\right)^{n(1-\ell_n)/2}
  |M_{[\ell]}|\geq \binom{n}{\ell}^{-1} e^{\delta_2 n/2}\right)
 \,.  \notag
 \end{align}
 The first term in \eqref{eq-12of7} is bounded by $1/n^3$ by our choice of
 parameters, see
 \eqref{eq-12of3a}. To analyze the second term we use Chebyshev's inequality
 and the fact that $\alpha<1$:
 \begin{align*}
  & \binom{n}{\ell}
  \P\left(\left(\frac{\alpha}{n}\right)^{n(1-\ell_n)/2}
  |M_{[\ell]}|\geq \binom{n}{\ell}^{-1}e^{\delta_2 n/2}\right) \\
  \leq
  &\binom{n}{\ell}^3 e^{-\delta_2n}\frac{\alpha^{n(1-\ell_n)} (n-\ell)!}{n^{n-\ell}}\\
  \leq
  &e^{n [3h(\ell_n)-\delta_2]}\leq e^{3h(1-\delta_1')-\delta_2}
  \leq \frac{1}{n^3}\,,
\end{align*}
where the last inequality is due to \eqref{eq-12of4a}.
This completes the proof of \eqref{eq-12of5} for $\ell_n\geq (1-\delta_1')$,
for any $\alpha\leq 1$.

It remains to analyze the case $\ell_n<(1-\delta_1')n$. This is where the
choice of $\alpha_0$ will be made. Starting from
\eqref{eq-12of6} we have by Chebyshev's inequality
\begin{eqnarray*}
  \P(|A_\ell| \geq e^{-\delta_2n/2})
&\leq &\binom{n}{\ell}^3 e^{\delta_2 n}
  \left(\frac{\alpha}{n}\right)^{n(1-\ell_n)}
  E|M_{[\ell]}|^2
  \\&
  \leq& \alpha^{n(1-\ell_n)}e^{3n\log 2}\leq
  e^{n[3\log 2+\delta_1'\log \alpha]}\,.
\end{eqnarray*}
  Choosing $\alpha_0<1$ such that
  $3\log 2+\delta_1'\log\alpha_0<0$ shows that the last term is bounded
  by $1/n^3$ for large $n$, and completes the proof of the lemma.
\end{proof}

  {\small

}


\begin{thebibliography}{S 99}
\bibitem{A55} T. W. Anderson, {\em The integral of a symmetric unimodal function over a symmetric convex set and some probability inequalities},
Proc. Amer. Math. Soc. {\bf 6} (1955), pp. 170--176.
  \bibitem{Bar} A. Barvinok, {\em Polynomial time algorithms to approximate permanents and mixed discriminants within a simply exponential factor},
    { Random Structures Algorithms} {\bf 14} (1999), pp. 29--61.
  \bibitem{MRchange}
I. Bez{\'a}kov{\'a}, D. {\v{S}}tefankovi{\v{c}},
              V. V.  Vazirani and E. Vigoda,
	      {\em
Accelerating simulated annealing for the permanent and
combinatorial counting problems},
SIAM J. Comput. {\bf 37} (2008), pp. 1429--1454.
	
\bibitem{CV} K. P. Costello and V. Vu,
  {\em Concentration of random determinants and permanent estimators},
  SIAM J. Discrete Math. {\bf 23} (2009), pp. 1356--1371.

  \bibitem{FRZ} S. Friedland, B. Rider and O. Zeitouni,
    {\em concentration of permanent estimators for certain large matrices},
    Annals Appl. Prob {\bf 14} (2004), pp. 1359--1576.
  \bibitem{FJ}
    A. Frieze and M. Jerrum,
    {\em
An analysis of a Monte Carlo algorithm for estimating the
permanent},
Combinatorica {\bf 15} (1995), pp. 67--83.

\bibitem{GG} C. D. Godsil and I. Gutman,
  {\em on the matching polynomial of a graph},
  in {\em Algebraic methods in graph theory I-II} (L. L\'{o}vasz and
  V. T. S\'{o}s, eds.), North-Holland, Amsterdam (1981), pp. 67--83.
\bibitem{Goodman} N. R. Goodman, {\em Distribution of the determinant
  of a complex Wishart distributed matrix}, Annals Stat. {\bf 34}
  (1963), pp. 178--180.
\bibitem{GZ} A. Guionnet and O. Zeitouni, {\em
  Concentration of the spectral measure for large matrices},
  Elec. Comm. Probab. {\bf 5} (2000), pp. 119--136.
  \bibitem{JSV}
M. Jerrum, A. Sinclair and E. Vigoda,
{\em
A polynomial-time approximation algorithm for the permanent of
a matrix with non-negative entries}, J. ACM {\bf 51} (2004), pp. 671--697.
\bibitem{KV} E. Kalfoten and G. Villard,
  {\em On the complexity of computing determinants},
  Comput. Complexity {\bf 13} (2004), pp. 91--130.
\bibitem{KKLLL}
N. Karmarkar, R.  Karp, R.  Lipton, L.  Lov{\'a}sz
and M. Luby,  {\em A {M}onte {C}arlo algorithm for estimating the permanent},
SIAM J. Comput. {\bf 22} (1993), pp. 284--293.

  \bibitem{Ledoux} M. Ledoux, {\em The concentration of measure phenomenon},
    American Math. Soc. (2001).



\bibitem{LSW} N. Linial, A. Samorodnitsky and A. Wigderson,
  {\em A deterministic strongly polynomial algorithm for matrix scaling and approximate permanents},
  Combinatorica {\bf 20} (2000), pp. 545--568.

 \bibitem{RS}
  S. Riemer, C. Sch\"{u}tt, {\em On the expectation of the norm of random matrices with non-identically distributed entries},
   Electron. J. Probab.  {\bf 18} (2013), no. 29, 13 pp.

 \bibitem{RV}
   M. Rudelson, R. Vershynin, {\em The Littlewood-Offord Problem and invertibility of random matrices}.
   Adv. Math {\bf 218} (2008), pp. 600--633.

 \bibitem{RV2}
   M. Rudelson, R. Vershynin, {\em The smallest singular value of a random rectangular matrix}.
   Comm. Pure Appl. Math. {\bf 62} (2009), pp. 1707--1739.
 \bibitem{Sz} S. Szarek,
   {\em
   Spaces with large distance to $\ell_\infty^n$
   and random matrices}. Amer. J. Math. {\bf 112} (1990), pp. 899--942.
 \bibitem{Val} L. Valiant, {\em The complexity of evaluating
   the permanent}, Theoret. Comput. Sci. {\bf 8}
   (1979), pp. 189--201.
 \bibitem{Wilks} S. S. Wilks, {\em Moment generating operators for determinants
   of product
   moments in samples from a normal system}, Ann. Math {\bf 35} (1934),
   pp. 312--340.
\end{thebibliography}
\end{document}